\theoremstyle{plain}
\newtheorem{thm}{Theorem}[section]
\newtheorem{lem}[thm]{Lemma}
\newtheorem{cor}[thm]{Corollary}
\newtheorem{prop}[thm]{Proposition}
\theoremstyle{definition}
\newtheorem{defn}[thm]{Definition}
\newtheorem{conj}[thm]{Conjecture}
\theoremstyle{remark}
\newtheorem{remark}[thm]{Remark}
    \newcommand{\sfG}{{\mathsf{G}}}\newcommand{\sfH}{{\mathsf{H}}}
  \newcommand{\sfS}{{\mathsf{S}}}
\newcommand{\sfu}{{\mathsf{u}}}
    \newcommand{\BC}{{\mathbb {C}}} 
     \newcommand{\BF}{{\mathbb {F}}}
    \newcommand{\BG}{{\mathbb {G}}}
     \newcommand{\BR}{{\mathbb {R}}}
    \newcommand{\CA}{{\mathcal {A}}} \newcommand{\CB}{{\mathcal {B}}}
    \newcommand{\fg}{{\mathfrak{g}}} \newcommand{\fh}{{\mathfrak{h}}}
    \newcommand{\fm}{{\mathfrak{m}}} 
     \newcommand{\fp}{{\mathfrak{p}}}
     \newcommand{\fJ}{{\mathfrak{J}}}
     \newcommand{\fT}{{\mathfrak{T}}}
    \newcommand{\ad}{{\mathrm{ad}}}
    \newcommand{\Ad}{{\mathrm{Ad}}}
    \newcommand{\depth}{{\mathrm{depth}}}
    \newcommand{\der}{\mathrm{der}}
    \newcommand{\Gal}{{\mathrm{Gal}}} 
    \newcommand{\GL}{{\mathrm{GL}}}
    \newcommand{\Hom}{{\mathrm{Hom}}}
    \newcommand{\id}{{\mathrm{id}}}
     \newcommand{\ind}{{\mathrm{ind}}}
     \newcommand{\Int}{{\mathrm{Int}}}
    \newcommand{\Lie}{{\mathrm{Lie}}}
    \newcommand{\new}{{\mathrm{new}}} 
     \newcommand{\N}{{\mathrm{N}}}
    \newcommand{\nm}{{\mathrm{nm}}}
     \newcommand{\rank}{{\mathrm{rank}}}
    \newcommand{\red}{\mathrm{red}} \newcommand{\R}{{\mathrm{R}}}
    \newcommand{\ram}{{\mathrm{ram}}}
    \newcommand{\Res}{{\mathrm{Res}}}
    \renewcommand{\ss}{{\mathrm{ss}}}
    \newcommand{\op}{\mathrm{op}}
    \newcommand{\unip}{{\mathrm{unip}}}
    \newcommand{\wh}{\widehat}
    \newcommand{\ov}{\overline}
    \newcommand{\sk}{\medskip}
    \newcommand{\lra}{\longrightarrow}
    \newcommand{\ra}{\rightarrow} 
    \newcommand{\bs}{\backslash}
    \newcommand{\s}{\sk\noindent}
    \newcommand{\abs}[1]{\lvert#1\rvert}
\title{Distinguished regular  supercuspidal representations}
\author{Chong Zhang}
\begin{document}
\date{}
\maketitle

\begin{abstract}
Based on recent work of Kaletha, we apply Hakim--Murnaghan's result
to study distinguished regular supercuspidal representations of
tamely ramified reductive $p$-adic groups. Assuming $p$ is
sufficiently large, we obtain a necessary and sufficient condition
for regular supercuspidal representations to be distinguished. We
also investigate the relation between the distinction problem and the Langlands
functoriality, and confirm a conjecture of Lapid for regular
depth-zero or epipelagic supercuspidal representations.
\end{abstract}

\tableofcontents

\section{Introduction}
\subsection*{Overview}
Supercuspidal representations are central in the representation theory of reductive groups over non-archimedean local fields, and the latter theory is crucial to the theory of automorphic representations. On the other hand, the properties of distinguished representations
have become a main theme in the study of automorphic
representations, especially after Jacquet and his collaborators'
work on various automorphic periods. The basic question is to
determine when the representations are distinguished, which we call the
distinction problem for short. For tame supercuspidal
representations, Hakim and Murnaghan \cite{hm08} developed a general
theory for this problem. Regular supercuspidal representations were recently introduced by Kaletha \cite{kal}. Our purpose is to apply Hakim--Murnaghan's work to determine distinguished regular supercuspidal representations, and study the relation between the distinction problem and the Langlands functoriality. Below we will give a brief summary of prior works related to this paper.

Our first goal is to give a natural criterion to detect the distinction of the regular supercuspidal representations, and our method relies heavily on the ways to construct supercuspidal representations. For a
tamely ramified reductive $p$-adic group $G$, Yu \cite{yu01}, inspired by Adler's
prior work \cite{adl98}, obtained  a remarkable way to construct
supercuspidal representations using generic cuspidal data. These
representations are called tame supercuspidal representations. For $G=\GL_n$, Howe's classical result \cite{how77} shows that these representations can be parameterized by much simpler data.
For general reductive groups, people are trying to find out a
more explicit parametrization, e.g. see
\cite{mur11}. In his recent
work \cite{kal}, Kaletha considered a large subclass of tame supercuspidal
representations which he called regular supercuspidal
representations. He showed that these representations can be
parameterized by simpler data $(S,\mu)$, called tame regular
elliptic pairs, than generic cuspidal data. This can be viewed as a generalization of Howe's parametrization mentioned above.

For tame supercuspidal
representations, Hakim and Murnaghan \cite{hm08} gave a criterion to detect the distinction with respect to symmetric spaces, and even obtained a
multiplicity formula in terms of the generic cuspidal data.
Using this theory together with Howe's construction, Hakim and his
collaborators successfully obtained much simpler criterions in terms of
Howe's data for several typical involutions of $\GL_n$, e.g. see \cite{hj12,hak13} and earlier work
\cite{hm02a,hm02b}. To apply Hakim--Murnaghan's result to concrete examples, it needs a further delicate analysis.

On the other hand, distinguished representations have a conjectural deep relation with the Langlands functoriality, which is the so-called relative Langlands program nowadays. We refer to Sakellaridis and Venkatesh's work \cite{sv} for a systematic exposition. To understand this relation, we have to assume the local Langlands correspondence. For regular supercuspidal representations, Kaletha \cite{kal} showed how to organize them into the $L$-packets attached to the regular supercuspidal $L$-parameters in the framework of rigid inner twists, which generalizes previous works \cite{ree08,dr09,dr10,ka14,ry14,kal15}. Our second goal is to apply our criterion for distinguished regular supercuspidal representations to the relative Langlands program.

\subsection*{Main results} Now we give a more detailed introduction
to our results. Let $F$ be a non-archimedean local field of residual characteristic $p$.
We suppose that $p$ is sufficiently large, and refer to Section \ref{subsubsec. assumption}
for the precise assumptions on $p$.

Let $G$ be a tamely ramified connected reductive group over $F$,
$\theta$ an involution of $G$ defined over $F$, and $H=G^\theta$ the
closed subgroup of fixed points of $\theta$. For an irreducible
admissible representation $\pi$ of $G(F)$, we say that it is {\em
$H$-distinguished} if the space $\Hom_{H(F)}(\pi,{\bf1})$ is non-zero where ${\bf1}$ stands for the trivial representation of $H(F)$.

The first part of this article is concerned with the properties of
distinguished regular supercuspidal representations in terms of the
inducing data tame regular elliptic pairs. We refer to
\cite[\S3]{kal} or Section \ref{subsubsec. regular sc repns} for the
basic definitions and facts on regular supercuspidal representations.

To state our results clearly, let us first consider the depth-zero
case, which is one of the cornerstones of the whole theory. In this case, all regular depth-zero supercuspidal representations of
$G(F)$ are constructed from the data $(S,\mu)$, where $S$ is a maximally
unramified elliptic maximal torus of $G$ and $\mu$ a regular
depth-zero character of $S(F)$. The construction is based on the
Deligne--Lusztig representation $\kappa_{(S,\mu)}$ of the parahoric
subgroup $G(F)_{x,0}$ of $G(F)$  determined by $S$. After extending
$\kappa_{(S,\mu)}$ to a representation $\tilde{\kappa}_{(S,\mu)}$ of
$S(F)G(F)_{x,0}$, we obtain the regular depth-zero supercuspidal
representation
$\pi_{(S,\mu)}=\ind_{S(F)G(F)_{x,0}}^{G(F)}\tilde{\kappa}_{(S,\mu)}$.
The isomorphism class of $\pi_{(S,\mu)}$ depends only on the
$G(F)$-conjugate class of $(S,\mu)$.

\begin{thm}\label{thm. intro thm depth-zero}
The regular depth-zero supercuspidal representation $\pi_{(\dot{S},\dot{\mu})}$ is $H$-distinguished if and
only if $(\dot{S},\dot{\mu})$ is $G(F)$-conjugate to a pair $(S,\mu)$ such that $S$ is
$\theta$-stable and
$$\mu|_{S^\theta(F)}=\varepsilon_{S}.$$
\end{thm}

Here, for a $\theta$-stable maximally unramified elliptic maximal
torus $S$, the character $\varepsilon_{S}$ is a quadratic character
of $S^\theta(F)$ (see Definition \ref{defn. character varepsilon}),
whose appearance arises from Lusztig's solution \cite{lus90} of the
distinction problem for symmetric spaces over finite fields. Moreover the character
$\varepsilon_S$ satisfies the property that
$\varepsilon_S|_{S^{\theta,\circ}(F)}=1$ where $S^{\theta,\circ}$ is
the identity component of $S^\theta$. Due to this property, Theorem
\ref{thm. intro thm depth-zero} implies the following relation
between the contragredient representation $\pi^\vee$ and the
$\theta$-twisted representation $\pi\circ\theta$ of $\pi$.

\begin{cor}[Corollary \ref{cor. cor of main theorem}]\label{cor. intro 1}
Suppose that $\pi$ is an $H$-distinguished regular depth-zero
supercuspidal representation of $G(F)$. Then we have
$\pi^\vee\simeq\pi\circ\theta$.
\end{cor}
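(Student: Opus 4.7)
The plan is to combine Theorem \ref{thm. intro thm depth-zero} with the behaviour of the parametrization $(S,\mu) \mapsto \pi_{(S,\mu)}$ under the two operations of contragredient and precomposition with $\theta$. After $G(F)$-conjugation we may assume that $S$ is $\theta$-stable and that $\mu|_{S^\theta(F)} = \varepsilon_S$. The goal is then to show
\[
\pi_{(S,\mu)}^{\vee} \simeq \pi_{(S,\mu)} \circ \theta,
\]
and I would try to deduce this from an equality of characters of $S(F)$.

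The first step is to establish the two identifications
\[
\pi_{(S,\mu)}^{\vee} \simeq \pi_{(S,\mu^{-1})}, \qquad
\pi_{(S,\mu)} \circ \theta \simeq \pi_{(S,\mu\circ\theta)}.
\]
The contragredient identity should follow because both taking the contragredient and compact induction commute, and because at the parahoric level the Deligne--Lusztig representation satisfies $\kappa_{(S,\mu)}^{\vee}\simeq\kappa_{(S,\mu^{-1})}$; one then checks that the choice of extension from $G(F)_{x,0}$ to $S(F)G(F)_{x,0}$ is compatible with duality. The second identity is a direct consequence of the $\theta$-equivariance of Kaletha's construction: since $S$ is $\theta$-stable, the associated vertex $x$ may be taken $\theta$-fixed, the parahoric $G(F)_{x,0}$ is $\theta$-stable, and precomposing the whole construction with $\theta$ sends $(S,\mu)$ to $(\theta^{-1}(S),\mu\circ\theta)=(S,\mu\circ\theta)$ since $\theta^2=\mathrm{id}$.

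Granting these identifications, Kaletha's parametrization reduces the corollary to checking that $(S,\mu^{-1})$ and $(S,\mu\circ\theta)$ give conjugate pairs, for which it is enough to prove $\mu^{-1} = \mu\circ\theta$ on $S(F)$, i.e.\ $\mu(s\,\theta(s)) = 1$ for every $s \in S(F)$. Now the morphism $\phi\colon S \to S$, $s \mapsto s\,\theta(s)$, factors through $S^\theta$, and since $S$ is connected its image lies in the identity component $S^{\theta,\circ}$; hence $\phi(S(F)) \subset S^{\theta,\circ}(F)$. Combining the relation $\mu|_{S^\theta(F)} = \varepsilon_S$ coming from Theorem \ref{thm. intro thm depth-zero} with the property $\varepsilon_S|_{S^{\theta,\circ}(F)} = 1$ recalled just after it, one gets
\[
\mu(s\,\theta(s)) \;=\; \varepsilon_S(s\,\theta(s)) \;=\; 1,
\]
which finishes the argument.

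The main obstacle is the first step: checking rigorously inside Kaletha's framework that the extension of $\kappa_{(S,\mu)}$ to $S(F)G(F)_{x,0}$, and the compact induction to $G(F)$, are compatible with both the contragredient and the twist by $\theta$. Once these compatibilities are in place, the final character-theoretic identity is essentially free, made possible by the crucial fact that $\varepsilon_S$ is trivial on the identity component $S^{\theta,\circ}(F)$.
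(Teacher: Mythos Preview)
Your proposal is correct and follows essentially the same route as the paper: after conjugating so that $(S,\mu)$ is $(\theta,\varepsilon)$-symmetric, the paper too invokes the routine identifications $\pi_{(S,\mu)}^\vee\simeq\pi_{(S,\mu^{-1})}$ and $\pi_{(S,\mu)}\circ\theta\simeq\pi_{(\theta(S),\mu\circ\theta)}$, and then deduces $\mu^{-1}=\mu\circ\theta$ from the inclusion $S(F)^{1+\theta}\subset S^{\theta,\circ}(F)$ together with $\varepsilon_S|_{S^{\theta,\circ}(F)}=1$. The only cosmetic difference is that the paper treats the compatibility checks as ``routine'' without further comment, whereas you flag them as the main point requiring care.
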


For general regular supercuspidal representations of $G(F)$ of positive depth, they are constructed from the data {\em tame regular elliptic pairs} $(S,\mu)$, where $S$ is a tame elliptic
maximal torus of $G$ and $\mu$ a character of $S(F)$ satisfying
certain conditions (cf. \cite[Definition 3.7.5]{kal} for the precise definition). If $G=\GL_n$, these data coincide with Howe's notion of admissible characters in \cite{how77}. There is a process, called {\em Howe
factorization} of $(S,\mu)$, which is a generalization of the Howe factorization lemma for $\GL_n$ (\cite[Lemma 11 and Corollary]{how77}), to produce a cuspidal generic $G$-datum
\begin{equation}\label{equ. G-datum}
\Psi=\left(\vec{G}=(G^0,...,G^d),\pi_{(S,\mu_0)},\vec{\phi}=(\phi_0,...,\phi_d)\right),
\end{equation}
such that $\pi_{(S,\mu_0)}$ is a regular depth-zero supercuspidal
representation of $G^0(F)$. We refer to \cite[\S3.6 and \S3.7]{kal} or Section \ref{subsubsec. tame regular elliptic pairs} for more details. By Yu's construction, this $G$-datum
$\Psi$ gives rise to a regular supercuspidal representation
$\pi_{(S,\mu)}$ whose isomorphism class depends only on the
$G(F)$-conjugate class of $(S,\mu)$. The following is our main
theorem on the distinction problem, which is a generalization of Theorem
\ref{thm. intro thm depth-zero}.

\begin{thm}[Theorem \ref{thm. main theorem}]\label{thm. intro main theorem}
The regular supercuspidal representation
$\pi_{(\dot{S},\dot{\mu})}$ is $H$-distinguished if and only if
$(\dot{S},\dot{\mu})$ is $G(F)$-conjugate to a tame regular elliptic
pair $(S,\mu)$ such that $S$ is $\theta$-stable and
$$\mu|_{S^\theta(F)}=\varepsilon_{S}\cdot\eta_{S}.$$
\end{thm}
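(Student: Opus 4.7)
The plan is to apply Hakim--Murnaghan's general criterion for distinction of tame supercuspidal representations to a cuspidal generic $G$-datum $\Psi$ obtained from a Howe factorization of $(\dot S,\dot\mu)$. Their result asserts that $\pi_\Psi$ is $H$-distinguished if and only if there exists a $G(F)$-conjugate of $\Psi$ which is $\theta$-symmetric, subject to a nonvanishing sign/character condition. Kaletha's reparameterization lets one repackage $\Psi$ as the pair $(S,\mu)$, so our task is to translate the Hakim--Murnaghan symmetry and sign conditions into the two conditions appearing in the statement: $\theta$-stability of $S$ and the character identity $\mu|_{S^\theta(F)}=\varepsilon_S\cdot\eta_S$.

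First I would show that one can conjugate $(\dot S,\dot\mu)$ so that $\theta$-symmetry of the resulting $\Psi$ is equivalent to $\theta$-stability of the torus $S$. The twisted Levi sequence $\vec G=(G^0,\ldots,G^d)$ attached to $(S,\mu)$ is canonically reconstructed from $S$ together with the depths produced by the Howe factorization; since each $G^i$ is the centralizer in $G$ of a sub-torus of $S$ cut out by $\phi_i$, stability of $S$ under $\theta$ forces $\theta$ to permute the $G^i$, and the regularity/genericity hypotheses force it to stabilize each one. Conversely, any $\theta$-symmetric Yu datum has its depth-zero part supported on a $\theta$-stable maximal torus, which after Kaletha's identification is our $S$.

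Next, I would analyze the sign/character condition. By multiplicativity of the Hakim--Murnaghan sign and compatibility of Yu's cuspidal induction with successive restriction, this condition factors as a depth-zero contribution together with one contribution for each positive-depth stratum $G^i\subsetneq G^{i+1}$. The depth-zero contribution is controlled by Theorem~\ref{thm. intro thm depth-zero}, which forces $\mu_0|_{S^\theta(F)}=\varepsilon_S$ via Lusztig's finite-field result. Each positive-depth contribution is a quadratic character of $S^\theta(F)$ built from Hakim--Murnaghan's local sign attached to the $\theta$-action on the root spaces of $G^{i+1}$ relative to $G^i$ at the relevant depth; the character $\eta_S$ is by definition the product of these over $i=0,\ldots,d-1$. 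Combining this with the Howe factorization relation $\mu=\mu_0\cdot\prod_i\phi_i|_{S(F)}$ and restricting to $S^\theta(F)$ produces the claimed identity $\mu|_{S^\theta(F)}=\varepsilon_S\cdot\eta_S$.

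The main obstacle I anticipate is the positive-depth piece: one must trace Hakim--Murnaghan's quadratic characters through Kaletha's dictionary between $G$-data and tame regular elliptic pairs, verify that each $\phi_i|_{S^\theta(F)}$ absorbs the corresponding sign correctly, and check that the cumulative character $\eta_S$ is well defined on $S^\theta(F)$ independently of the choice of Howe factorization and of the conjugating element used to realize $\theta$-symmetry. The depth-zero case serves as both the base step and the conceptual model, but the bookkeeping of root-space signs along the filtration of twisted Levi subgroups, and their compatibility with the symmetric-space structure inherited on each $G^i$, is where the technical work concentrates.
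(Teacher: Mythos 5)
Your overall strategy — reduce to Hakim--Murnaghan's distinction criterion, then translate it into a condition on $(S,\mu)$ via Kaletha's dictionary — does match the paper's. But the decomposition you propose for the character condition is not correct. The Hakim--Murnaghan criterion (Theorem \ref{thm. HM distinction}) asks for $\Hom_{K^{0,\theta}}(\rho_\nm,\eta_\theta)\neq 0$, where $\rho_\nm=\rho\otimes(\phi|_{K^0})$ with $\phi=\prod_i\phi_i$. This is equivalent to $\Hom_{K^{0,\theta}}(\rho,\tilde\eta_\theta)\neq 0$ with $\tilde\eta_\theta=\eta_\theta\cdot\phi^{-1}|_{K^{0,\theta}}$. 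After a Mackey decomposition of $\rho=\ind_{G_S^0}^{K^0}\tilde\kappa_{(S,\mu_\circ)}$ (which your proposal never invokes, and which is essential in both directions), one must solve a distinction problem for the depth-zero representation $\tilde\kappa_{(S,\mu_\circ)}$ \emph{against the non-trivial twist} $\tilde\eta_\theta$. The paper does this with Proposition \ref{prop. multiplicity}, a twisted version of Lusztig's finite-field distinction theorem valid for an arbitrary auxiliary character. The resulting condition is $\mu_\circ|_{S^\theta(F)}=\varepsilon_S\cdot\eta_\theta\cdot\phi^{-1}|_{S^\theta(F)}$, which rearranges — using $\mu=\mu_\circ\cdot\phi|_{S(F)}$ — to $\mu|_{S^\theta(F)}=\varepsilon_S\cdot\eta_S$. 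Your claim that the depth-zero theorem independently forces $\mu_0|_{S^\theta(F)}=\varepsilon_S$ is the special case $\tilde\eta_\theta|_{S^\theta}=1$, which fails once there are positive-depth strata; and the implicit follow-up $\prod\phi_i|_{S^\theta(F)}=\eta_S$ is not true — the HM determinant character $\eta_\theta$ is not the restriction of $\prod\phi_i$. The $\phi_i$ cancel against the $\phi^{-1}$ twist internal to $\tilde\eta_\theta$; they are not being equated with $\eta_S$.

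Beyond this miscalibration, several ingredients your proposal elides are genuine lemmas rather than routine bookkeeping, and the argument breaks without them. You need Lemma \ref{lem. disctinction finite field } and Lemma \ref{lem. stable torus} to produce, from a nonzero Mackey summand indexed by $k\in K^0$, a $\theta'$-stable $p$-adic torus $G(F)_{x,0}$-conjugate to $S$ (with $\theta'=k^{-1}\cdot\theta$) — without this the restriction of $\mu$ to $S^\theta(F)$ is not even well posed. For the sufficiency direction you need Lemma \ref{lem. theta Howe factorization} and Corollary \ref{cor. theta symmetric} to produce a Howe factorization with $\phi|_{(S^u)^{\theta,\circ}(F)}=1$ and $\phi|_{G^0(F)^\theta_{x,0+}}=1$ (so $[\theta]\sim[\Psi]$ and the Mackey $g=1$ term is nonzero), and Lemma \ref{lem. key lemma} ($\eta_S|_{(S^u)^{\theta,\circ}(F)}=1$) to verify the hypothesis of the twisted Lusztig result. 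You correctly anticipate that this is where the technical work concentrates; but the concrete plan you offer produces the wrong intermediate statements, so the pieces would not assemble.
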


Here $\varepsilon_S$ is the quadratic character of $S^{\theta}(F)$ introduced before, but with respect to $(G^0,S)$, and $\eta_S$ is also a quadratic character of $S^\theta(F)$, whose
appearance arises from Hakim--Murnaghan's work when the
representation is of positive depth. The reason that we do not have
a consequence of Theorem \ref{thm. intro main theorem} as Corollary
\ref{cor. intro 1} is due to the character $\eta_S$. At this moment,
we could not show $\eta_S|_{S(F)^{1+\theta}}=1$, while this
relation holds for all the examples in the literature as far as we
know. In particular, $\eta_S$ is the trivial character for
epipelagic supercuspidal representations. Therefore the analog of
Corollary \ref{cor. intro 1} holds for epipelagic supercuspidal
representations.

Let us outline the proof of Theorem \ref{thm. intro main theorem}.
Roughly speaking, modulo $G(F)$-conjugation, we can apply
Hakim--Murnaghan's work to reduce the distinction problem of
$\pi_{(S,\mu)}$ to that of $\tilde{\kappa}_{(S,\mu_\circ)}$ which is
a representation of $S(F)G^0(F)_{x,0}$. To ensure that the
distinction problem makes sense for
$\tilde{\kappa}_{(S,\mu_\circ)}$, we have to show that
$S(F)G^0(F)_{x,0}$ is $\theta$-stable. This point is guaranteed by
the distinction problem over finite fields (Lemma \ref{lem. disctinction finite field })
and liftings of $\theta$-stable tori over finite fields to those over
$p$-adic fields (Lemma \ref{lem. stable torus}). The solution of the
distinction problem of $\tilde{\kappa}_{(S,\mu_\circ)}$ relies on
Proposition \ref{prop. multiplicity}. The proof of Proposition
\ref{prop. multiplicity} is a little complicated, due to the
subtleness of the construction of $\tilde{\kappa}_{(S,\mu_\circ)}$.

\begin{remark}\label{rem. hakim's work}
After a preliminary version of this article was completed, we notice
that Hakim's most recent work \cite{haka} and \cite{hakb} provide a
new approach to the construction of tame supercuspidal
representations and its application to the distinction problem. One of
the main features of \cite{haka} is that it eliminates Howe's
factorizations in Yu's construction. This new construction improves
the main results of \cite{hm08}, see \cite[Theorem 2.0.1]{hakb}. It
would be interesting to see whether our results can be simplified or
generalized when combined with these developments.
\end{remark}

The second part of this article is concerned with the properties of
distinguished regular supercuspidal representations in terms of the
Langlands parameters. 
The philosophy of the relative Langlands program
\cite{sv} is that if $\pi$ is an $H$-distinguished representation
then the $L$-parameter $\varphi$ of $\pi$ should have more
symmetries. Roughly speaking, $\varphi$ should factor through the $L$-group of the symmetric space.
In other words, $\pi$ should be a Langlands
functorial lift from a representation of some group other than
$G(F)$. In the context of symmetric spaces, Lapid proposed a conjecture
which is easier to state. We learnt of this conjecture from \cite[Conjecture
1.1]{gla}. We believe that Lapid's conjecture is closely related to the relative Langlands program, since it reflects certain information on the symmetry of the parameter
$\varphi$. However it is beyond our scope to discuss this relationship. For Galois symmetric spaces, Prasad \cite{pra} formulated
a more precise conjecture in terms of the refined $L$-parameters, see
{\em loc. cit.} for more details.

\begin{conj}[Lapid]\label{conj. Lapid}
Let $G$ be a connected reductive group over a $p$-adic field $F$,
$\theta$ an involution of $G$ defined over $F$, and $H=G^\theta$.
Let $\pi$ be an admissible irreducible representation of $G(F)$ and
$\Pi_\varphi(G)$ the conjectural $L$-packet containing $\pi$.
Suppose that $\pi$ is $H$-distinguished. Then we have
$$\left\{\tau\circ\theta\ |\ \tau\in\Pi_\varphi(G)\right\}
=\left\{\tau^\vee\ |\ \tau\in\Pi_\varphi(G)\right\}.$$ In other words,
$\Pi_\varphi(G)$ is invariant under
$\tau\mapsto\tau^\vee\circ\theta$.
\end{conj}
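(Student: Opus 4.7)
The statement as written is the general Lapid conjecture; the paper's contribution (announced in the abstract) is to verify it when $\pi$ is a regular depth-zero or epipelagic supercuspidal representation. I will outline the proof in that setting.

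The strategy is to deduce the conjecture from the relation $\pi^\vee \simeq \pi\circ\theta$, which is Corollary \ref{cor. intro 1} in the depth-zero case. For epipelagic $\pi$, the same relation holds: the auxiliary character $\eta_S$ appearing in Theorem \ref{thm. intro main theorem} is trivial in the epipelagic setting, so the distinction criterion reduces to $\mu|_{S^\theta(F)}=\varepsilon_S$ exactly as in the depth-zero case, and the argument behind Corollary \ref{cor. intro 1} (which uses only the fact that $\varepsilon_S|_{S^{\theta,\circ}(F)}=1$) applies verbatim to give $\pi^\vee \simeq \pi\circ\theta$.

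Granting this, the conjecture is essentially formal. Since $\theta^2=\mathrm{id}$, the isomorphism $\pi^\vee \simeq \pi\circ\theta$ is equivalent to $\pi^\vee\circ\theta \simeq \pi$. Under the parametrization of regular supercuspidal $L$-packets established by Kaletha in \cite{kal}, the contragredient and the $\theta$-twist on the representation side correspond respectively to the contragredient and the $\theta^*$-twist on the parameter side, so the involution $\tau\mapsto\tau^\vee\circ\theta$ descends to a well-defined involution on $L$-packets, sending $\Pi_\varphi(G)$ to $\Pi_{\theta^*\varphi^\vee}(G)$. The relation $\pi^\vee\circ\theta\simeq\pi$ then yields $\theta^*\varphi^\vee = \varphi$ (as equivalence classes of parameters), hence
$$\{\tau^\vee\circ\theta : \tau\in\Pi_\varphi(G)\} \;=\; \Pi_{\theta^*\varphi^\vee}(G) \;=\; \Pi_\varphi(G),$$
which is Lapid's conjecture in the form stated, after the trivial observation that invariance of $\Pi_\varphi(G)$ under $\tau\mapsto\tau^\vee\circ\theta$ is the same as the equality $\{\tau\circ\theta\}=\{\tau^\vee\}$ displayed in the conjecture (both sides being $\Pi_{\theta^*\varphi}(G) = \Pi_{\varphi^\vee}(G)$).

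The main thing to check carefully, and the main anticipated obstacle, is this last compatibility: one has to verify that if $\pi_{(S,\mu)}$ has $L$-parameter $\varphi$ then the representation attached to the pair $(\theta(S),\mu^{-1}\circ\theta)$ has parameter $\theta^*\varphi^\vee$ (up to equivalence). For the contragredient this is a standard feature of Kaletha's construction, but the $\theta$-twist requires functoriality of that construction under $F$-rational automorphisms of $G$, which must be extracted from \cite{kal}. Once this dictionary between the pair-side and the parameter-side operations is in place, the conjecture in the two cases of interest follows immediately.
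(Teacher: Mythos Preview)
Your proposal is correct and follows essentially the same route as the paper: deduce $\pi^\vee\simeq\pi\circ\theta$ from the distinction criterion (Corollaries \ref{cor. cor of main theorem} and \ref{cor. epipelagic}), then use compatibility of the $\theta$-twist and contragredient with Kaletha's $L$-packet construction to conclude that the parameters ${^L\theta}\circ\varphi$ and ${^LC}\circ\varphi$ are $\wh{G}$-conjugate, hence the packets agree. One clarification: the compatibility statements you flag as ``must be extracted from \cite{kal}'' are in fact proved in this paper as Propositions \ref{prop. twisted L-packets} and \ref{prop. contragredient L-packet}, not imported from Kaletha; the contragredient case extends \cite{ka13}, and the $\theta$-twist case is new here, so your anticipated obstacle is real work rather than a citation.
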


Now we return to the context of regular supercuspidal
representations and retain the assumptions as before. We further
assume that $F$ has characteristic zero and $G$ is quasi-split. Kaletha \cite[\S5]{kal} defined the
notion {\em regular supercuspidal $L$-parameters} $\varphi$ for $G$.
For each rigid inner twist $(G',\xi,z)$ of $G$, he also constructed
$L$-packets $\Pi_\varphi(G')$ that consists of certain regular
supercuspidal representations of $G'(F)$. In this paper, we consider not merely the
distinction problem for $G$, but also for all other rigid inner
twists $(G',\xi,z)$ of $G$ such that the fixed involution $\theta$
of $G$ can be ``transferred'' to an involution $\theta'$ of $G'$. For this purpose,
we introduce the notion {\em rigid inner twists} of
$(G,H,\theta)$ in Section \ref{subsubsec. rigid inner twists of sym space}, which are denoted by $(G',H',\theta')$ where
$H'=(G')^{\theta'}$.  For each rigid inner twist $(G',H',\theta')$,
we can think about $H'$-distinction for the representations in
$\Pi_\varphi(G')$. Motivated by Conjecture \ref{conj. Lapid}, we
would like to know what the sets
$$\Pi_\varphi^\theta(G'):=\left\{\pi\circ\theta'\ |\
\pi\in\Pi_\varphi(G')\right\}\ \ \textrm{and}\ \
\Pi_\varphi^\vee(G'):=\left\{\pi^\vee\ |\
\pi\in\Pi_\varphi(G')\right\}$$ are, especially in terms of the
$L$-parameters. The answer is not surprising and has been long
expected. Let $^LC$ be the Chevalley involution of the $L$-group
$^LG$ and $^L\theta$ the involution of $^LG$ dual to $\theta$. Then
$^L\theta\circ\varphi$ and $^LC\circ\varphi$ are also regular
supercuspidal parameters. In Propositions \ref{prop. twisted
L-packets} and \ref{prop. contragredient L-packet}, we show that
$$\Pi_\varphi^\theta(G')=\Pi_{^L\theta\circ\varphi}(G')\ \ \textrm{and}\ \
\Pi_\varphi^\vee(G')=\Pi_{^LC\circ\varphi}(G').$$ Therefore, if
$\pi^\vee$ is isomorphic to $\pi\circ\theta'$ for some
representation $\pi$ in $\Pi_\varphi(G')$, the parameters
$^LC\circ\varphi$ and $^L\theta\circ\varphi$ are $\wh{G}$-conjugate,
which implies $\Pi_\varphi^\theta(G')=\Pi_\varphi^\vee(G')$. In
particular, if $\pi$ is an $H'$-distinguished regular depth-zero
or epipelagic supercuspidal representation then Conjecture \ref{conj.
Lapid} holds (see Corollary \ref{cor. consequence}).

\begin{remark}\label{rem. unramified galois}
In a sequel \cite{zhab} to this paper, we show that $\eta_S$ is trivial if $\theta$ is an unramified Galois involution (cf. {\em loc. cit.} Proposition 4.1). Therefore Conjecture \ref{conj. Lapid} holds for distinguished regular supercuspidal representations with respect to unramified Galois involutions (cf. {\em loc. cit.} Corollary 4.2).
\end{remark}

\subsection*{Organization of this article} The assumptions on the
residual characteristic $p$, and necessary notation and convention
are given in the rest of this section. We recollect some background
materials in Section \ref{sec. preliminaries}, including Yu's
construction of tame supercuspidal representations, Hakim--Murnaghan's
result on distinguished tame supercuspidal representations, and
Kaletha's work on regular supercuspidal representations. Some
details of these contents that we need will appear in latter
sections or be referred to the references. Our main results on
the distinction problem are stated in Section \ref{subsubsec. main theorems}.
Before that, we introduce the two characters $\varepsilon_S$ and
$\eta_S$, and analyze their properties in Sections \ref{subsubsec.
character varepsilon} and \ref{subsubsec. character eta}. The notion
$(\theta,\varepsilon\eta)$-symmetric pairs is introduced in Section
\ref{subsubsec. twisted theta symmetric} where its basic properties
are also discussed. The proofs of Theorems \ref{thm. intro thm
depth-zero} and \ref{thm. intro main theorem} are given in Section
\ref{subsec. proofs}. Kaletha's construction of regular
supercuspidal $L$-packets $\Pi_\varphi$ is reviewed in Section
\ref{subsec. regular supercuspidal L-packets}. Then we study the
twisted $L$-packets $\Pi_\varphi^\theta$ and the contragredient
$L$-packets $\Pi_\varphi^\vee$ in Sections \ref{subsec. twisted
L-packet} and \ref{subsec. contragredient L-packets} respectively.

\subsection*{Assumptions}\label{subsubsec. assumption}
Throughout this article, $F$ is a non-archimedean local field of residual characteristic $p$. 
Moreover,  we will require that $F$ has characteristic zero in Section 4. 
The reason that we restrict to characteristic zero is that this assumption is needed in Kaletha's construction of $L$-packets. To
apply the theories mentioned in the introduction, we have to make certain
restrictions on $p$ in different stages. We require that
$p$ satisfies all of the following conditions:
\begin{enumerate}
\item $p$ is odd,
\item $p\nmid\abs{\pi_1(G_\der)}$,
\item $p$ is not a {\em bad prime} for $G$,
\item $p\nmid\abs{\pi_0(G)}$.
\end{enumerate}
The first assumption is needed in Hakim--Murnaghan's work. The
second one is used for the definition of regular supercuspidal
representations and also to ensure the existence of Howe
factorizations of tame regular elliptic pairs, see Remark \ref{rem.
for assumption}. The third assumption is required for the proof of
Lemma \ref{lem. key lemma} and for the construction of regular
supercuspidal $L$-packets. The last one is also for the construction
of regular supercuspidal $L$-packets. We refer to \cite[\S2.1]{kal}
for more detailed explanations and discussions on the roles that
these assumptions play in his theory of regular supercuspidal
representations, and also for a brief summarization of bad primes
determined by the type of $G$.

\subsection*{Notation and convention}
Let $F$ be a non-archimedean local field as before, $O_F$ the ring of integers of
$F$, and $k_F$ the residue field of $F$. We fix a separable closure
$F^s$ of $F$ and denote by $\Gamma$ the  Galois group
$\Gal(F^s/F)$. We write $W_F$ for the Weil group of $F$, $I_F$
for the inertia subgroup of $W_F$, and $P_F$ for the tame inertia
subgroup of $I_F$. Let $F^u$ be the maximal unramified extension of
$F$ in $F^s$ with residue field $\ov{k}_F$.

For a connected reductive group $G$ defined over $F$, we denote by
$Z(G)$ its center, by $A(G)$ its split central torus, by $G_\der$ its derived subgroup, by $G_\ad$ the
adjoint quotient of $G_\der$, and by $\fg$ the Lie algebra of $G$.
For an element $g\in G$ we will write $\Ad(g)$ for the conjugation
action of $g$ on $G$, i.e., $\Ad(g)(x)=gxg^{-1}$ for $x\in G$, and
also for the adjoint action of $g$ on $\fg$. When we mention a
subgroup of $G$, we always assume that it is a closed algebraic
subgroup defined over $F$. For a subgroup $M$ of $G$, we use
$M^\circ$ to denote its identity connected component. For any subset
$U$ of $G$, we use $C_G(U)$ to denote the identity component of its
centralizer in $G$.

For an involution $\theta$ of $G$, we always mean that it is a non-trivial automorphism of order two and defined over $F$. We denote by
$G^\theta$ the $\theta$-fixed subgroup of $G$ and by
$G^{\theta,\circ}$ its identity component. Then both $G^\theta$ and
$G^{\theta,\circ}$ are reductive subgroups of $G$. The group $G(F)$
has a natural action on the set of involutions, which is given by
$$g\cdot\theta:=\Ad(g)\circ\theta\circ\Ad(g^{-1}).$$ Let $M$ be a subgroup of $G$
and $\phi$ a character of $M(F)$. For $g\in G(F)$ we denote
$${^gM}:=g^{-1}Mg\quad\textrm{and}\quad{^g\phi}:=\phi\circ\Ad(g),$$ where ${^g\phi}$ is a
character of ${^gM(F)}$. We will use the following fact frequently.
If $M$ is $g\cdot\theta$-stable then $^gM$ is $\theta$-stable and
$({^gM})^\theta={^g(M^{g\cdot\theta})}$. For a $\theta$-stable
subgroup $U$ of $G(F)$, let $$U^{1+\theta}:=\{u\theta(u)\ |\ u\in U\}\subseteq U$$  denote the subgroup
of norms with respect to $\theta$. If $(\pi,V_\pi)$ is a representation of $G(F)$
where $V_\pi$ is the underlying space of $\pi$, we use
$\pi\circ\theta$ to denote the representation of $G(F)$ with
underlying space $V_\pi$ and action given by
$(\pi\circ\theta)(g)v=\pi(\theta(g))v$ for $v\in V_\pi$.

We will use similar notation as above when we discuss objects over
finite fields.

For a maximal torus $S$ of $G$, we denote by $N(S,G)$ the normalizer
of $S$ in $G$, by $\Omega(S,G)=N(S,G)/S$ the absolute Weyl group,
and by $R(S,G)$ the corresponding set of roots. The Galois
group $\Gamma$ has a natural action on $R(S,G)$. For any $\alpha\in
R(S,G)$, we denote by $\Gamma_\alpha$ (resp. ${\Gamma_{\pm\alpha}}$)
the stabilizer of $\alpha$ (resp. $\{\alpha,-\alpha\}$) in $\Gamma$,
and by $F_\alpha$ (resp. $F_{\pm\alpha}$) the corresponding fixed
subfield of $F^s$. We call $\alpha$ symmetric if the degree of
the extension $F_\alpha/F_{\pm\alpha}$ is 2, and call asymmetric
otherwise. We call $\alpha$ ramified or unramified if the extension
$F_\alpha/F_{\pm\alpha}$ is ramified or unramified respectively.

We denote by $\CB^\red(G,F)$ the reduced Bruhat--Tits building of
$G(F)$, and by $\CA^\red(S,F)$ the reduced apartment of $S$ in
$\CB^\red(G,F)$ where $S$ is a maximal torus of $G$ that is
maximally split. For $x\in\CB^\red(G,F)$, we write $G(F)_x$ for the stabilizer
of $x$ in $G(F)$, $G(F)_{x,0}$ for
the parahoric subgroup of $G(F)$ attached to $x$, $G(F)_{x,0+}$ for
its pro-unipotent radical, and $\sfG_x$ for the corresponding
connected reductive group over $k_F$. More generally, we denote by
$G(F)_{x,r}$ the Moy-Prasad filtration subgroups for any
$r\in\BR_{\geq0}$ and by $\fg(F)_{x,r}$ the filtration lattices of
$\fg(F)$ for any $r\in\BR$ (see \cite{mp94}). Moreover, we write
$G(F)_{x,r+}=\bigcup\limits_{s>r}G(F)_{x,s}$,
$G(F)_{x,r:s}=G(F)_{x,r}/G(F)_{x,s}$ and
$\fg(F)_{x,r:s}=\fg(F)_{x,r}/\fg(F)_{x,s}$ for $s>r$. We use
$\tilde{\BR}$ to denote the set $\BR\cup\{r+|
r\in\BR\}\cup\{\infty\}$.

Given a torus $T$ defined over $F$, let $\fT$ be the connected Neron
model of $T$ over $O_F$. We denote by $T(F)_0$ the subgroup
$\fT(O_F)$ of $T(F)$. We write $T^u$ for the maximal unramfied
subtorus of $T$. We can also define the Moy-Prasad filtration
subgroups $T(F)_r$ for any $r\geq 0$. In particular, when
$T=\Res_{E/F}\BG_m$, we have $E_0^\times=O_E^\times$ and
$E_r^\times=1+\fp_E^{[er]}$ for $r>0$, where $\fp_E$ is the maximal
ideal of $O_E$ and $e$ is the ramification index of the finite
extension $E/F$.

\subsection*{Acknowledgements}
This work was partially supported by NSFC Grants (11971223, 11501033), Fundamental Research Funds for the Central Universities (14380018), and Zheng Gang Scholars Program. Part of this
work was completed while the author was a visitor of Yau
Mathematical Sciences Center at Tsinghua University; he thanks the
institution for the hospitality. He would like to thank Tasho
Kaletha and  Cheng-Chiang Tsai for kindly answering several
questions, and thank Jeffrey Hakim for communicating his recent work
and helpful discussions. He also thanks the anonymous referees for pointing out several mathematical inaccuracies and many helpful suggestions to improve the paper.

\section{Preliminaries}\label{sec. preliminaries}
\subsection{Yu's construction}
In this subsection we briefly review Yu's construction of tame
supercuspidal representations \cite{yu01}. Let $G$ be a tamely ramified connected reductive group over $F$.

\subsubsection{Cuspidal $G$-data}\label{subsubsec. cuspidal G-datum}
Recall that a {\em cuspidal
$G$-datum} is a 4-tuple $\Psi=(\vec{G},x,\rho,\vec{\phi})$ that
satisfies the following conditions:
\begin{enumerate}
\item $\vec{G}$ is a tamely ramified twisted Levi sequence
$\vec{G}=(G^0,...,G^d)$ in $G$ such that $Z(G^0)/Z(G)$ is
anisotropic.
\item $x$ is a point in $\CA^\red(S,F)$, where $S$ is a tame maximal
torus of $G^0$.
\item $\rho$ is an irreducible representation of $K^0:=G^0(F)_x$ such
that $\rho|_{G^0(F)_{x,0+}}$ is {\bf1}-isotypic and the compactly
induced representation $\pi_{-1}=\ind_{K^0}^{G^0(F)}(\rho)$ is
irreducible.
\item $\vec{\phi}=(\phi_0,...,\phi_d)$ is a sequence of
quasicharacters, where $\phi_i$ is a quasicharacter of $G^i(F)$. We
require that: if $d=0$ then $\phi_0$ is of depth $r_0\geq0$; if
$d>0$ and $\phi_d$ is non-trivial then $\phi_i$ is of depth $r_i$ for
$i=0,...,d$ and $0<r_0<r_1<\cdots<r_{d-1}<r_d$; if $d>0$ and
$\phi_d$ is trivial then $\phi_i$ is of depth $r_i$ for
$i=0,...,d-1$ and $0<r_0<r_1<\cdots<r_{d-1}$. We will call
$\vec{r}=(r_0,...,r_d)$ the depth of $\vec{\phi}$ for short, and denote
$\phi:=\prod\limits_{i=0}^d\phi_i|_{G^0(F)}$.
\end{enumerate}

Note that the condition on $\rho$ implies that $\pi_{-1}$ is
supercuspidal and of depth zero. Conversely every irreducible
depth-zero supercuspidal representation of $G^0(F)$ arises in this
way. We call a triple $\Psi=(\vec{G},\pi_{-1},\vec{\phi})$ a {\em
reduced cuspidal $G$-datum} if $\vec{G}$ and $\vec{\phi}$ satisfy
the condition 1 and 4 above respectively and $\pi_{-1}$ is an
irreducible depth-zero supercuspidal representation of $G^0(F)$.
There is no essential difference between cuspidal $G$-datum and
reduced cuspidal $G$-datum.

We say that a (reduced) cuspidal $G$-datum is {\em generic} if
$\phi_i$ is $G^{i+1}$-generic for $i\neq d$. We refer to
\cite[Definition 3.9]{hm08} for the notion of genericity. Throughout
this article, we will only deal with generic (reduced) cuspidal
$G$-data, and will call them {\em $G$-data} for short.

\subsubsection{The representation $\pi(\Psi)$}\label{subsubsec. the representation pi}
Let $\Psi$ be a cuspidal $G$-datum. Let $K^0=G^0(F)_{x}$ and
$K^0_+=G^0(F)_{x,0+}$. For $0\leq i\leq d-1$, set
$s_i=\frac{r_i}{2}$, and put
$$K^{i+1}=K^0G^1(F)_{x,s_0}\cdots G^{i+1}(F)_{x,s_i},\quad
\ K_+^{i+1}=K^0_+G^1(F)_{x,s_0+}\cdots G^{i+1}(F)_{x,s_i+}.$$ 

When $\Psi$ is generic, Yu obtained an irreducible supercuspidal
representation $\pi(\Psi)$ of $G(F)$, called {\em tame supercuspidal
representation}, by a very technical process. The basic idea is
first to construct a representation $\kappa$ of $K^d$ from $\rho$
and generic quasicharacters $\phi_i$, and then set
$\pi(\Psi)=\ind_{K^d}^{G(F)}\kappa$. We refer to \cite{yu01} for
more details. In summary we have a map from the set of $G$-data to
that of tame supercuspidal representations.

\subsubsection{$G$-equivalence}\label{subsubsec. G-equivalence}
To study the dependence of $\pi(\Psi)$ on $\Psi$, Hakim and
Murnaghan  introduced three operations, which are called {\em
refactorization}, {\em elementary transformation} and {\em
$G$-conjugation}, on generic (reduced) cuspidal $G$-data. We refer
to \cite[Definition 4.19]{hm08}, \cite[Definitions 5.2 and 6.2]{hm08}
and \cite[page 110]{hm08} for the definition of these three
operations respectively. Note that these operations do not change the
genericity. Two $G$-data are called {\em $G$-equivalent} if they can
be obtained from each other by a finite sequence of these three
operations. One of the main result of \cite{hm08} is the following.

\begin{thm}\label{thm. HM G-equivalent}
Let $\Psi$ and $\dot{\Psi}$ be two generic (reduced) cuspidal
$G$-data. Then $\pi(\Psi)$ and $\pi(\dot{\Psi})$ are isomorphic if
and only if $\Psi$ and $\dot{\Psi}$ are $G$-equivalent.
\end{thm}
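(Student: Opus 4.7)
The theorem has two directions. The forward implication (``$G$-equivalent $\Rightarrow$ equivalent representations'') is verified by checking each of the three operations individually preserves the isomorphism class of $\pi(\Psi)$. The converse is the substantive content; the natural tool is Mackey's intertwining criterion applied to the compactly induced representation $\pi(\Psi)=\ind_{K^d}^{G(F)}\kappa$, where $\kappa$ is the representation of $K^d$ built by Yu from $\Psi$.

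\textbf{The ``if'' direction.} Since $\pi(\Psi)$ depends on $\Psi$ only through the pair $(K^d,\kappa)$ up to conjugacy, the task is to show that each of the three operations fixes $(K^d,\kappa)$ up to the natural notion of equivalence. For $G$-conjugation this is immediate. Refactorizations are designed so that the characters $\phi_i$ are modified by twists $z_i$ of $G^i(F)$ that vanish on appropriate Moy-Prasad filtration subgroups; one checks directly that Yu's Heisenberg-type construction of $\kappa$ is insensitive to this modification because the product $\prod_i \phi_i$, restricted to the relevant compact open subgroups, is preserved. Elementary transformations concern rearrangements involving the depth-zero piece and the degenerate case $\phi_d=1$; in each case one verifies that the subgroup $K^d$ and the representation $\kappa$ (and hence $\pi(\Psi)$) are unchanged by direct inspection of Yu's recipe.

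\textbf{The ``only if'' direction.} Assume $\pi(\Psi)\simeq\pi(\dot{\Psi})$. By Mackey's intertwining criterion,
$$0\neq\Hom_{G(F)}(\pi(\Psi),\pi(\dot{\Psi}))=\bigoplus_{g\in K^d\backslash G(F)/\dot{K}^d}\Hom_{K^d\cap g\dot{K}^d g^{-1}}(\kappa,{^g\dot{\kappa}}),$$
so some $g\in G(F)$ intertwines $\kappa$ with ${^g\dot{\kappa}}$. Replacing $\dot{\Psi}$ by $g\cdot\dot{\Psi}$ is a $G$-conjugation, so we may reduce to $g=1$ and assume $\Hom_{K^d\cap \dot{K}^d}(\kappa,\dot{\kappa})\neq 0$. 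The remaining work is to extract from this intertwining (a) the twisted Levi sequence $\vec{G}$, up to conjugation and elementary transformations, and (b) the characters $\phi_i$, up to refactorization. For (a) I would argue that the filtration $K^0\subset K^1\subset\cdots\subset K^d$ is encoded in the depth stratification of $\kappa$ via the Moy-Prasad filtration: genericity of the $\phi_i$ forces each $G^i$ to be the centralizer of the restriction of the cumulative character $\phi=\prod_j\phi_j$ to an appropriate coset of a filtration lattice, so matching the stratifications on the two sides forces $\vec{G}=\dot{\vec{G}}$ (or differs by an elementary transformation in the degenerate cases). For (b), once the sequences agree, nonvanishing of the intertwining on the Heisenberg pieces $J^{i+1}/J^{i+1}_+$ forces $\prod \phi_i$ and $\prod \dot{\phi}_i$ to agree on the relevant level subgroups; the leftover ambiguity in redistributing characters among the layers is precisely a refactorization.

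\textbf{Main obstacle.} The hardest step is the intrinsic recovery of the twisted Levi filtration in (a). The delicacy is that Yu's construction produces $\kappa$ via a sequence of Heisenberg extensions, and the sequence of groups used in this construction is not obviously recoverable from $\kappa$ alone: different twisted Levi sequences could a priori produce the same $\kappa$. The role of genericity is precisely to rule this out, by ensuring that each $\phi_i$ detects the centralizer $G^i$ and no smaller twisted Levi. Making this rigidity argument quantitative—and showing that whatever freedom remains is accounted for by refactorizations and elementary transformations—is the technical heart of the proof, and is where the careful machinery of \cite{hm08} (in particular their analysis of the cuspidal types and Heisenberg data) is required.
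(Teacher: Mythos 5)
The paper does not prove this theorem: it is imported wholesale from Hakim--Murnaghan, namely \cite[Theorem 6.6]{hm08}, and Remark \ref{rem. hypothesis C(G)} notes only that the auxiliary hypothesis $C(\vec{G})$ appearing there has since been removed by Kaletha \cite[Corollary 3.5.5]{kal}. So there is no in-paper proof to compare your attempt against.

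As for the attempt itself: the overall plan is the right one and matches the strategy of \cite{hm08} in broad strokes. The ``if'' direction is indeed an operation-by-operation check, and the ``only if'' direction does begin from Mackey-style intertwining of the compactly induced data. But what you have written for the converse is a roadmap rather than a proof, and the single step you flag as the ``main obstacle'' is in fact the entire theorem. Two concrete points are glossed over. First, the reduction to $g=1$ and then the claim that nonvanishing of $\Hom_{K^d\cap\dot K^d}(\kappa,\dot\kappa)$ forces alignment of the twisted Levi filtrations is not a formal Mackey consequence; in \cite{hm08} this is extracted through a delicate layer-by-layer analysis of the Heisenberg--Weil pieces on the quotients $J^{i+1}/J^{i+1}_+$, using genericity at each stage, and separately an analysis of the depth-zero piece (which is where \cite{mp96} and the notion of unrefined minimal $K$-types enter). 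Second, your argument that any residual freedom is ``precisely a refactorization'' requires the explicit classification of refactorizations and elementary transformations from \cite[\S\S4--6]{hm08}; one must also handle the degenerate case $\phi_d=1$ and the replacement of $(\vec G, x,\rho,\vec\phi)$ by a reduced datum, where elementary transformations genuinely change the tuple without changing $(K^d,\kappa)$. None of this is supplied. If you intend to reproduce the proof rather than cite it, you need to carry out the inductive intertwining computation of \cite[\S5]{hm08} and the normalization arguments of \cite[\S\S4,6]{hm08}; and if you want the theorem without hypothesis $C(\vec G)$, you additionally need the argument of \cite[Corollary 3.5.5]{kal}.
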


\begin{remark}\label{rem. hypothesis C(G)}
Theorem \ref{thm. HM G-equivalent} was proved in {\em loc. cit.} Theorem
6.6 under a hypothesis called $C(\vec{G})$. We refer
to {\em loc. cit.} page 47 for the precise statement of $C(\vec{G})$.
Recently this hypothesis was removed by Kaletha \cite[Corollary
3.5.5]{kal}.
\end{remark}

\subsection{Kaletha's work}

Kaletha's recent work \cite{kal} provides a more elegant
parametrization for most of the tame supercuspidal representations.
These representations are called regular supercuspidal
representations, which are the main objects of our paper. As before, $G$ is a tamely ramified connected reductive group over $F$.

\subsubsection{Maximally unramified elliptic tori and regular depth-zero characters}
A maximal torus $S$ of
$G$ is called {\em maximally unramified} if $S\times F^u$ is a
minimal Levi subgroup of the quasi-split group $G\times F^u$. See
\cite[Fact 3.4.1]{kal} for other equivalent definitions. Now let $S$
be a maximally unramified elliptic maximal torus. Recall that we
denote by $S^u$ the maximal unramified subtorus of $S$. The unique
Frobenius-fixed point $x$ in $\CA^\red(S,F^u)$ is a vertex of
$\CB^\red(G,F)$ \cite[Lemma 3.4.3]{kal}. We have (\cite[Lemmas 3.1.6 and 3.4.6]{kal})
$$S^u(F)_0/S^u(F)_{0+}\stackrel{\simeq}{\lra}S(F)_0/S(F)_{0+},\quad S(F)_0=S(F)\cap G(F)_{x,0}.$$
The special fiber of the connected Neron model of $S^u$ embeds in $\mathsf{G}_x$ as an elliptic maximal torus $\mathsf{S^u}$, and the images of $S(F)_0$ and $S^u(F)_0$ in $\mathsf{G}_x(k_F)$ are
equal to $\mathsf{S^u}(k_F)$. Conversely, for a vertex  $x$ of $\CB^\red(G,F)$,
every elliptic maximal torus of $\mathsf{G}_x$ arises in this way. See {\em loc. cit.} Lemma 3.4.4.

For a vertex $x$ of $\CB^\red(G,F)$ and an elliptic maximal torus $\mathsf{S^u}$ of $\mathsf{G}_x$ that corresponds to a maximally unramified elliptic maximal torus $S$ of $G$, a character $\lambda$ of $\sfS^\sfu(k_F)$ is called {\em regular} if its stabilizer in $N(S,G)(F)/S(F)$ is trivial. A depth-zero character $\mu$ of $S(F)$ is called {\em regular} if $\mu|_{S(F)_0}$ induces a regular character $\bar{\mu}$ of $\sfS^\sfu(k_F)$; see {\em loc. cit.} Definition 3.4.16. Note that if $\lambda$ is a regular character of $\sfS^\sfu(k_F)$ then it is in general position which means that its stabilizer in $\Omega(\sfS^\sfu,\sfG_x)(k_F)$ is trivial (cf. {\em loc. cit.} Fact 3.4.18).

\subsubsection{Regular supercuspidal representations}\label{subsubsec. regular sc repns}
Let $\Psi=(\vec{G},\pi_{-1},\vec{\phi})$ be a reduced generic
cuspidal $G$-datum. Recall that $\pi_{-1}$ is an irreducible
depth-zero supercuspidal representation of $G^0(F)$. By
\cite[Proposition 6.8]{mp96}, there exists a vertex
$x\in\CB^\red(G^0,F)$ such that $\pi_{-1}|_{G^0(F)_{x,0}}$ contains
the inflation to $G^0(F)_{x,0}$ of an irreducible cuspidal
representation $\kappa$ of
$G^0(F)_{x,0:0+}\simeq\mathsf{G^0}_x(k_F)$. 

We call $\Psi$ {\em
regular} if $\kappa$ is a Deligne--Lusztig cuspidal representation
$\pm R_{\mathsf{T},\lambda}$ attached to an elliptic maximal torus
$\mathsf{T}$ of $\mathsf{G^0}_x$ and a regular character $\lambda$ of
$\mathsf{T}(k_F)$ (cf. \cite[Definition 3.7.9]{kal}). Note that if $\Psi$ is
regular, then any $G$-datum in its $G$-equivalent class is also
regular. We call an irreducible supercuspidal representation $\pi$
of $G(F)$ {\em regular} if it is of the form $\pi(\Psi)$ for some
regular generic reduced cuspidal $G$-datum $\Psi$. According to
Theorem \ref{thm. HM G-equivalent} the regularity of $\pi$ is well
defined.

\begin{remark}\label{rem. for assumption}
The above definition of regular supercuspidal representations requires
the assumption that $p\nmid \abs{\pi_1(G_\der)}$. More generally, if $p$ divides $\abs{\pi_1(G_\der)}$, an
irreducible supercuspidal representation $\pi$ of $G(F)$ is called
regular if its inflation to $\tilde{G}(F)$ is regular in the above sense,
where $\tilde{G}\ra G$ is a $z$-extension. See \cite[\S3.7.4]{kal} for more details. In this situation,
there may exist regular supercuspidal representations which can not
be constructed from Yu's construction. One reason that we need the
assumption $p\nmid |\pi_1(G_\der)|$ is that we want to apply Hakim--Murnaghan's result that is
valid for tame supercuspidal representations. Another reason is that
we need the existence of Howe factorizations of tame regular
elliptic pairs, see Section \ref{subsubsec. tame regular elliptic
pairs}.
\end{remark}

\subsubsection{Parametrization: depth-zero case}\label{subsubsec. tame regular pairs depth-zero
case} 
Suppose that $\pi$ is a regular depth-zero supercuspidal
representation of $G(F)$. By \cite[Proposition 3.4.27]{kal} there exists a
maximally unramified elliptic maximal torus $S$ of $G$ and a
regular depth-zero character $\mu$ of $S(F)$ such that $\pi$
is of the form $\pi_{(S,\mu)}$. 

First let us  recall the definition of $\pi_{(S,\mu)}$. Let $x$ be the vertex determined by $S$, 
and $\pm R_{\mathsf{S^u},\bar{\mu}}$ the Deligne--Lusztig
cuspidal representation of $\mathsf{G}_x(k_F)$ associated with
$\mathsf{S^u}$ and $\bar{\mu}$. We denote by $\kappa_{(S,\mu)}$ the
inflation of $\pm R_{\mathsf{S^u},\bar{\mu}}$ to $G(F)_{x,0}$. In \cite[\S3.4.4]{kal}, Kaletha constructed a representation $\tilde{\kappa}_{(S,\mu)}$ of $G_S:=S(F)G(F)_{x,0}$, which is an extension of $\kappa_{(S,\mu)}$. The technical issue is that in general $Z(F)
S(F)_0$ is not equal to $S(F)$, which makes the construction of
$\tilde{\kappa}_{(S,\mu)}$ more subtle. According to {\em loc. cit.} Lemma
3.4.20, the representation
$$\rho_{(S,\mu)}:=\ind_{G_S}^{G(F)_x}\tilde{\kappa}_{(S,\mu)}$$ is
irreducible and thus the representation
$$\pi_{(S,\mu)}:=\ind_{G(F)_x}^{G(F)}\rho_{(S,\mu)}$$ is a regular
depth-zero supercuspidal representation. 

Moreover, \cite[Proposition 3.4.27]{kal} states that the isomorphism classes of regular depth-zero supercuspidal representations are parameterized by the conjugacy classes of the pairs $(S,\mu)$. In summary, we have:

\begin{lem}\label{lem. parameterize regular depth-zero repn}
Each regular depth-zero supercuspidal representation is of the form $\pi_{(S,\mu)}$ for some pair $(S,\mu)$ as above. Two regular depth-zero supercuspidal representations
$\pi_{(S_1,\mu_1)}$ and $\pi_{(S_2,\mu_2)}$ are isomorphic if and
only if the pairs $(S_1,\mu_1)$ and $(S_2,\mu_2)$ are
$G(F)$-conjugate.
\end{lem}

\subsubsection{Parametrization: general case}\label{subsubsec. tame regular elliptic pairs}
To obtain an analogous parametrization as Lemma \ref{lem.
parameterize regular depth-zero repn} for general regular
supercuspidal representations, Kaletha introduced the notion {\em
tame regular elliptic pairs} $(S,\mu)$, where $S$ is a tame elliptic
maximal torus of $G$ and $\mu$ a character of $S(F)$ satisfying the
conditions in \cite[Definition 3.7.5]{kal}.

Suppose that $\Psi=(\vec{G},\pi_{(S,\mu_\circ)},\vec{\phi})$ is a
regular reduced generic cuspidal $G$-datum, where $S$ is a maximally
unramified elliptic maximal torus of $G^0$ and $\mu_\circ$  a
regular depth-zero character of $S(F)$ with respect to $G^0$. Then
$(S,\mu)$ is a tame regular elliptic pair \cite[Proposition 3.7.8]{kal},
where $$\mu=\mu_\circ\prod_{i=0}^d\phi_i|_{S(F)}.$$

Conversely, given a tame regular elliptic pair $(S,\mu)$, a Howe
factorization \cite[\S3.6]{kal} of $(S,\mu)$ provides a regular
generic cuspidal $G$-datum $\Psi$ and thus a regular supercuspidal
representation $$\pi_{(S,\mu)}:=\pi(\Psi)$$ of $G(F)$. For later use, let us
review the definition of Howe factorization. Let $E$ be the
splitting field of $S$. For each $r>0$, the Levi subsystem
$$R_r=\{\alpha\in R(S,G)\ |\ \mu(\N_{E/F}(\alpha^\vee(E_r^\times)))=1\}$$
of $R(S,G)$ gives a filtration $r\mapsto R_r$ of $R(S,G)$. The Levi subsystems $R_{r_i}$'s associated to the 
breaks $r_{d-1}>r_{d-2}>\cdots>r_0>0$ in the filtration of $R(S,G)$ give rise to a twisted Levi
sequence
\begin{equation}\label{equ. twisted levi}\vec{G}=(G^0,\cdots,G^d=G),\end{equation} where, for $0\leq i\leq d-1$, $G^i$ is the twisted Levi
subgroup of $G$ such that $S$ is a maximal torus and
$R(S,G^i)=R_{r_i}$. Set $r_d=\depth(\mu)$ and
\begin{equation}\label{equ. breaks}\vec{r}=(r_0,...,r_d).\end{equation}
A {\em Howe factorization} of $(S,\mu)$ is
a sequence of characters:
$$\mu_\circ:S(F)\ra\BC^\times,\quad\vec{\phi}=\left(\phi_i: G^i(F)\ra\BC^\times,
\ 0\leq i\leq d\right)$$ satisfying the conditions: $\mu_\circ$ is
regular (with respect to $G^0$) and of depth zero, $\vec{\phi}$ is
of depth $\vec{r}$ such that
$(\vec{G},\pi_{(S,\mu_\circ)},\vec{\phi})$ is a {\em normalized} (cf. \cite[Definition 3.7.1]{kal}) regular
reduced cuspidal generic $G$-datum and
$\mu=\mu_\circ\prod\limits_{i=0}^d\phi_i|_{S(F)}$. 
For convenience, we also call the above $G$-datum
$(\vec{G},\pi_{(S,\mu_\circ)},\vec{\phi})$ a Howe factorization of
$(S,\mu)$. Under the assumption that $p\nmid\abs{\pi_1(G_\der)}$,
Howe factorizations always exist and differ by refactorizations; see
{\em loc. cit.} Proposition 3.6.7 and Lemma 3.6.6. We remark that a Howe factorization is defined for any pair $(S,\mu)$ where $S$ is a tame maximal torus of $G$ and $\mu$ an arbitrary character of $S(F)$ in {\em loc. cit.} Definition 3.6.2. We refer to \cite[\S3.6]{kal} for more details.

In summary, 
\cite[Proposition 3.7.8 and Corollary 3.7.10]{kal} establish a parametrization of
regular supercuspidal representations in terms of tame regular
elliptic pairs.

\begin{lem}\label{lem. parameterize regular repn}
Each regular supercuspidal representation is of the form $\pi_{(S,\mu)}$ for some tame regular elliptic pair $(S,\mu)$.	
Two regular supercuspidal representations $\pi_{(S_1,\mu_1)}$ and
$\pi_{(S_2,\mu_2)}$ are isomorphic if and only if the pairs
$(S_1,\mu_1)$ and $(S_2,\mu_2)$ are $G(F)$-conjugate.
\end{lem}

\subsection{Hakim--Murnaghan's work}

Let $\theta$ be an involution of $G$  and $H=G^\theta$. Let
$\Psi=(\vec{G},x,\rho,\vec{\phi})$ be a generic cuspidal $G$-datum
and $\pi=\pi(\Psi)$ the irreducible supercuspidal representation of
$G(F)$ attached to $\Psi$ (see \S2.1.1). Hakim--Murnaghan's result \cite[Theorem
5.26]{hm08} provides an explicit formula for
$\dim\Hom_{H(F)}(\pi,{\bf1})$. Later Hakim and Lansky \cite{hj12} corrected
some mistakes in \cite{hm08} and improved the theory.
\begin{defn}\label{defn. theta symmetric}
We say that $\Psi$ is {\em $\theta$-symmetric} if
\begin{itemize}
\item $\theta(\vec{G})=\vec{G}$, i.e., $\theta(G^i)=G^i$ for any $0\leq i\leq
d$,
\item $\vec{\phi}\circ\theta=\vec{\phi}^{-1}$, i.e.,
$\phi_i\circ\theta=\phi_i^{-1}$ for any $0\leq i\leq d$,
\item $\theta(x)=x$.
\end{itemize}
\end{defn}

As in \cite{hj12}, we denote by $[\Psi]$ the set of refactorizations of $\Psi$ and by
$[\theta]$ the $K^0$-orbit of $\theta$. Recall that we denote $\phi=\prod\limits_{i=0}^d\phi_i|_{G^0(F)}$. Set
$\rho_\nm:=\rho\otimes\left(\phi|_{K^0}\right)$, which is an
invariant of $[\Psi]$. Note that $\phi|_{K^0_+}$ is also an
invariant of $[\Psi]$.

\begin{defn}\label{defn. theta symmetric 2}
We write $[\theta]\sim[\Psi]$ if
\begin{equation}\label{equ. weak theta symmetric}\theta(K^0)=K^0\quad \textrm{and}\quad
\phi|_{K^{0,\theta}_+}=1.\end{equation}
\end{defn}

We remark that $[\theta]\sim[\Psi]$ is well defined since the
condition (\ref{equ. weak theta symmetric})  depends only on
$[\theta]$ and $[\Psi]$. The relation between Definition \ref{defn.
theta symmetric} and Definition \ref{defn. theta symmetric 2} is as follows \cite[Proposition 3.9]{hj12}.

\begin{lem}\label{lem. relation theta symmetric}
We have $[\theta]\sim[\Psi]$ if and only if there exists
$\dot{\Psi}\in[\Psi]$ such that $\dot{\Psi}$ is $\theta$-symmetric.
\end{lem}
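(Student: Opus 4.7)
The plan is to prove the two directions separately. The direction from symmetry to $[\theta]\sim[\Psi]$ is essentially formal, while the converse requires a careful inductive refactorization.

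For the $\Leftarrow$ direction, I would first observe that condition \eqref{equ. weak theta symmetric} is invariant under the $K^0$-orbit in $\theta$ (conjugation by $k\in K^0$ sends $K^{0,\theta}_+$ to $k^{-1}K^{0,\theta}_+k$, on which a character of $K^0$ has the same vanishing behavior) and under refactorization of $\Psi$ (since $\phi|_{K^0_+}$ is preserved and $K^0_+$ is pro-$p$). So it suffices to verify it on a $\theta$-symmetric $\dot\Psi$. From $\theta(G^0)=G^0$ and $\theta(x)=x$ one gets $\theta(K^0)=K^0$. For $u\in K^{0,\theta}_+$, the symmetry $\phi\circ\theta=\phi^{-1}$ gives $\phi(u)=\phi(\theta(u))=\phi(u)^{-1}$, so $\phi(u)^2=1$, and the pro-$p$/odd-$p$ hypothesis forces $\phi(u)=1$.

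For the $\Rightarrow$ direction, I proceed in three stages. \emph{Stage 1 (fixing the depth-zero data).} Using that $x$ and $G^0$ are recoverable from $K^0$ via Bruhat--Tits theory (the maximally unramified torus, the vertex, and the twisted Levi envelope), and allowing ourselves to replace $\theta$ by a $K^0$-conjugate (which is permitted since we are working with the orbit $[\theta]$), I arrange $\theta(x)=x$ and $\theta(G^0)=G^0$. \emph{Stage 2 (propagating $\theta$-stability up the tower).} Inductively on $i$, I would refactorize so that the genericity of each $\phi_i$ on $G^i(F)$ pins $G^{i+1}$ down as the twisted Levi cut out by a $\theta$-stable set of roots, which forces $\theta(G^{i+1})=G^{i+1}$. \emph{Stage 3 (achieving $\phi_i\circ\theta=\phi_i^{-1}$).} Working from top depth downward, the hypothesis $\phi|_{K^{0,\theta}_+}=1$ says $\phi\cdot(\phi\circ\theta)$ is trivial on $\theta$-fixed elements of $K^0_+$; applying this filtration-by-filtration via Moy--Prasad, and using that in odd residual characteristic each $G^i(F)_{x,r_i:r_i+}$ decomposes into $(1\pm\theta)$-eigenparts, one extracts that the depth-$r_i$ component of $\phi_i$ must satisfy $\phi_i\circ\theta\equiv \phi_i^{-1}$ modulo characters of strictly lower depth. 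Such a lower-depth discrepancy is exactly what refactorization permits one to transfer into $\phi_{i-1}$. Iterating yields the required $\theta$-symmetric $\dot\Psi\in[\Psi]$, with the residual depth-zero datum $\rho$ absorbing any remaining finite-order character of $K^0/K^0_+$ via the genericity argument for $\rho_{\nm}$.

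The main obstacle is Stage 3: one must verify that the obstruction to antisymmetry of $\phi_i$ at depth $r_i$ lies entirely in the lower-depth quotient, with no ``cross'' contamination from other $\phi_j$. This uses genericity of the $\phi_i$'s in an essential way to decouple their depth-$r_i$ data, combined with the compatibility between the Moy--Prasad filtration on $S(F)\subset G^i(F)$ and the $\theta$-action. The base case (matching up $\rho_{\nm}$ with the Deligne--Lusztig datum of a $\theta$-stable torus in $\mathsf{G^0_x}$) is then handled by appealing to the odd residual characteristic once more, since $K^0/K^0_+$ is a finite reductive group with odd-order unipotent radical considerations trivialized.
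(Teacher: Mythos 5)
The paper does not prove this lemma; it is cited verbatim from \cite[Proposition~3.9]{hj12}, so there is no internal proof to compare against. Your $\Leftarrow$ direction is essentially correct (and is the standard, easy direction): the condition \eqref{equ. weak theta symmetric} is a $K^0$-conjugation-invariant, refactorization-invariant statement, and a $\theta$-symmetric datum visibly satisfies it because $\phi\circ\theta=\phi^{-1}$ forces $\phi$ to be quadratic on $K^{0,\theta}_+$, hence trivial by the pro-$p$, $p$ odd hypothesis.

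The $\Rightarrow$ direction, however, contains genuine gaps. Stage~1 is internally inconsistent: you first claim $x$, $G^0$ are \emph{recoverable} from $K^0$ (which would make $\theta(K^0)=K^0$ directly imply $\theta(x)=x$, $\theta(G^0)=G^0$, with no conjugation needed), and then say you must pass to a $K^0$-conjugate to achieve this. One of these statements should be dropped. Stage~2 is not substantiated: $G^{i+1}$ is part of the \emph{given} datum $\vec{G}$, which is unchanged by refactorization; it is not ``cut out'' by the genericity of $\phi_i$, and you do not explain what mechanism lets genericity of $\phi_i$ (a condition involving $G^{i+1}$) upgrade $\theta(G^i)=G^i$ to $\theta(G^{i+1})=G^{i+1}$. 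The entire tower $\vec{G}$ should be handled at once, not by an induction that treats $G^{i+1}$ as an output. Stage~3 is the heart of the matter and is largely a wishlist: you assert that the depth-$r_i$ failure of antisymmetry can be ``transferred into $\phi_{i-1}$'' by refactorization, but refactorization in the Hakim--Murnaghan sense (\cite[Definition~4.19]{hm08}) is a specific operation with constraints --- notably each replacement $\phi_i'$ must remain a character of the full group $G^i(F)$, not just of $K^0$ or of a Moy--Prasad quotient --- and you never verify that a character adjustment at the $K^0_+$-level extends to $G^i(F)$ compatibly with genericity and the cuspidality/depth constraints on $\rho$. The final remark that $\rho_{\nm}$ ``absorbs any remaining finite-order character'' is also unsupported: refactorization couples changes in $\phi_i$ to a change in $\rho$, and one must check that the new $\rho$ still yields an irreducible supercuspidal $\pi_{-1}$. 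In short, the statement is true (it is exactly Hakim--Lansky's Proposition~3.9), but your argument for the forward direction is an outline of intent rather than a proof, with the genuine difficulties --- compatibility of the needed adjustments with the definition of refactorization and with the global characters $\phi_i$ on $G^i(F)$ --- left untouched.
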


In particular $[\theta]\sim[\Psi]$ implies that
$\theta(\vec{G})=\vec{G}$ and $\theta(x)=x$. For our purpose on the
distinction problem, we only need the following partial result
derived from \cite[Theorem 5.26]{hm08} and \cite[Theorem
3.10]{hj12}.

\begin{thm}\label{thm. HM distinction} For a $G$-datum $\dot{\Psi}$,
the representation $\pi(\dot{\Psi})$ is $H$-distinguished if and
only if $\dot{\Psi}$ is $G$-equivalent to a $G$-datum $\Psi$ such
that
\begin{enumerate}
\item $[\theta]\sim[\Psi]$,
\item
$\Hom_{K^{0,\theta}}(\rho_\nm,\eta_{\theta})\neq0$.
\end{enumerate}
\end{thm}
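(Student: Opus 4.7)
The plan is to deduce this qualitative criterion from the quantitative multiplicity formula of Hakim--Murnaghan, as corrected and refined by Hakim--Lansky. The starting point is \cite[Theorem 5.26]{hm08}, which expresses $\dim\Hom_{H(F)}(\pi(\Psi),\mathbf{1})$ as a finite sum, indexed by a double coset space of pairs $(\theta',\Psi')$ where $\theta'$ runs over $G(F)$-conjugates of $\theta$ and $\Psi'$ runs over refactorizations of $G(F)$-conjugates of $\Psi$. Each summand is a product of two factors: a combinatorial/volume factor (always nonzero), and a $\Hom$-space on the depth-zero piece that takes the form $\Hom_{K^{0,\theta'}}(\rho_\nm,\eta_{\theta'})$. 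The character $\eta_{\theta'}$ is only defined when the compatibility $[\theta']\sim[\Psi']$ holds; a summand not satisfying this compatibility is declared (or readily shown) to be zero.

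First I would invoke the $G$-equivalence invariance of $\pi(\dot\Psi)$ supplied by Theorem~\ref{thm. HM G-equivalent} (now unconditional after Kaletha, cf.\ Remark~\ref{rem. hypothesis C(G)}) so that the assertion is truly about $\pi(\dot\Psi)$ and not about a specific representative. Then, since $\dim\Hom_{H(F)}(\pi(\dot\Psi),\mathbf{1})\neq0$ if and only if at least one term in the Hakim--Murnaghan sum is nonzero, distinction is equivalent to the existence of an indexing pair $(\theta',\Psi')$ for which both conditions of the theorem are satisfied with respect to that pair. Conjugating by a suitable element of $G(F)$ replaces $\theta'$ by $\theta$ and $\Psi'$ by a $G$-equivalent datum $\Psi$; this precisely produces a $G$-equivalent $\Psi$ satisfying $[\theta]\sim[\Psi]$ and the nonvanishing of $\Hom_{K^{0,\theta}}(\rho_\nm,\eta_\theta)$. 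Conversely, if such a $\Psi$ exists, the corresponding term appears in the multiplicity formula with positive contribution and the sum is nonzero.

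The subtle point, which is exactly what \cite[Theorem~3.10]{hj12} addresses, is the correct identification of the $\Hom$-space factor: the original statement in \cite[Theorem 5.26]{hm08} needed correction both on the definition of the character $\eta_{\theta}$ on $K^{0,\theta}$ and on the group where the $\Hom$ is taken (passing from $K^0$ to $K^{0,\theta}$ properly uses the condition $\phi|_{K^{0,\theta}_+}=1$ of Definition~\ref{defn. theta symmetric 2}). I would simply cite this corrected formulation. The only other mild obstacle is verifying that the equivalence relation implicit in the summation (which mixes $G(F)$-conjugation of $\theta$ with refactorizations and $G$-conjugations of $\Psi$) is correctly repackaged as a single ``$G$-equivalent $\Psi$ with $[\theta]\sim[\Psi]$'' statement; this is a matter of bookkeeping using Lemma~\ref{lem. relation theta symmetric}, which lets us replace the condition $[\theta]\sim[\Psi]$ by an honest $\theta$-symmetric representative if desired. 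Since the theorem claims no more than the qualitative nonvanishing, no further quantitative information from the multiplicity formula is needed.
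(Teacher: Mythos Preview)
Your proposal is correct and matches the paper's approach exactly: the paper does not give an independent proof of this theorem but simply states it as a partial result ``derived from \cite[Theorem 5.26]{hm08} and \cite[Theorem 3.10]{hj12}'', and your write-up is a faithful unpacking of how that derivation goes. The only point worth noting is that your proposal is more detailed than what the paper provides---the paper treats the statement as a black-box citation, whereas you sketch the mechanism (multiplicity formula as a sum, vanishing of terms without the compatibility $[\theta]\sim[\Psi]$, conjugating $\theta'$ back to $\theta$); this is fine and arguably an improvement in exposition.
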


Here $\eta_{\theta}$ is a quadratic character of $K^{0,\theta}$,
which is introduced in \cite[\S5.6]{hm08} and denoted by $\eta'_\theta$ therein. We caution the reader that in \S5.6 of {\em loc. cit.} there is another character denoted by $\eta_\theta$. We apologize for the inconsistency of this notation. Now let us record the definition of $\eta_\theta$. For each $0\leq i\leq d-1$, consider the subgroups
$$J^{i+1}=(G^i,G^{i+1})(F)_{x,(r_i,s_i)}\quad\textrm{and}\quad J^{i+1}_+=(G^i,G^{i+1})(F)_{x,(r_i,s_i+)}$$ of $G^{i+1}(F)_{x,s_i}$ and $G^{i+1}(F)_{x,s_i+}$ respectively,
whose definitions are given at the end of \cite[\S3]{yu01} (also see \cite[page 53]{hm08}).
The quotient group
$$W_i:=J^{i+1}/J^{i+1}_+$$ is equipped with a structure of symplectic
$\BF_p$-vector space \cite[Lemma 11.1]{yu01}. Since $[\theta]\sim
[\Psi]$, both $J^{i+1}$ and $J^{i+1}_+$ are $\theta$-stable for each
$i$. Thus $\theta$ induces a linear transformation on $W_i$, which
is still denoted by $\theta$. Set
$$W_i^{\theta}=\left\{w\in W_i | \theta(w)=w\right\}.$$ Then $W_i^{\theta}$ is stable by
$K^{0,\theta}$ under the conjugate action. Let $\chi_i^\theta$  be
the quadratic character of $K^{0,\theta}$ defined by
\begin{equation}\label{equ. character chi theta}
\chi_i^\theta(k)=\det\left(\Ad(k)|_{W_i^{\theta}}\right)^{\frac{p-1}{2}}.\end{equation}
Then the character $\eta_\theta$ is defined to be
\begin{equation}\label{equ. character eta_theta}
\eta_\theta=\prod_{i=0}^{d-1}\chi_i^\theta,\end{equation} and trivial if $d=0$. Note that
$\eta_\theta$  depends only on $[\Psi]$.

\section{Distinction}

\subsection{Main results}
Our main theorem is Theorem \ref{thm. main theorem}, whose statement
is given in Section \ref{subsubsec. main theorems} and whose proof
is delayed to Section \ref{subsec. proofs}. We first introduce two
characters $\varepsilon_S$ and $\eta_S$, which are involved in
Theorem \ref{thm. main theorem}, in Sections \ref{subsubsec.
character varepsilon} and \ref{subsubsec. character eta}
respectively. A direct but important consequence (Corollary
\ref{cor. cor of main theorem}) of the main theorem is also stated
in Section \ref{subsubsec. main theorems}. Some examples are
discussed in Section \ref{subsec. supplements}.

As before, we always assume that $G$ is a tamely ramified connected
reductive group over $F$ and $\theta$ an involution of $G$.

\subsubsection{The character $\varepsilon_S$}\label{subsubsec. character
varepsilon} First we review the character $\varepsilon_\mathsf{T}$
introduced in \cite[\S2]{lus90} for the distinction problem over finite fields. Let
$\mathsf{G}$ be a connected reductive group over $k_F$ and $\theta$
an involution of $\mathsf{G}$ defined over $k_F$. Suppose that
$\mathsf{T}$ is a $\theta$-stable maximal $k_F$-torus of
$\mathsf{G}$. Recall that we denote by $\mathsf{T}^{\theta,\circ}$
the identity component of $\mathsf{T}^\theta$. As a map on $\mathsf{T}^\theta(k_F)$,
$\varepsilon_\mathsf{T}$ is given  by
$$\varepsilon_{\mathsf{T}}(t)=\sigma(C_{\mathsf{G}}(\mathsf{T}^{\theta,\circ}))\cdot
\sigma(C_{\mathsf{G}}(\mathsf{T}^{\theta,\circ})\cap
C_\mathsf{G}(t)),$$ where
$\sigma(\mathsf{M})$ is set to be $(-1)^{\rank_{k_F}(\mathsf{M})}$ for any
connected reductive group $\mathsf{M}$ over $k_F$. By
\cite[Proposition 2.3 (b) and (c)]{lus90} the map
$\varepsilon_{\mathsf{T}}$ is actually a character, and satisfies
\begin{equation}\label{equ. character varepsilon relation}
\varepsilon_{\mathsf{T}}|_{\mathsf{T}^{\theta,\circ}(k_F)}=1.\end{equation}

Let $\mathsf{T}_\ad$ be the image of $\mathsf{T}$ in
$\mathsf{G}_\ad$. For $t\in\mathsf{T}_\ad$, we denote by
$C_\mathsf{G}(t)$ the identity component of the centralizer of $t$
in $\mathsf{G}$. The involution $\theta$ induces an involution,
still denoted by $\theta$, on $\mathsf{G}_\ad$. Let
$(\mathsf{T}_\ad)^{\theta,\circ}$ be the identity component of
$(\mathsf{T}_\ad)^\theta$, and $(\mathsf{T}^{\theta,\circ})_\ad$ the
image of $\mathsf{T}^{\theta,\circ}$ in $\mathsf{G}_\ad$.

For any $t\in(\mathsf{T}_\ad)^\theta(k_F)$, $C_\mathsf{G}(t)$ is
also defined over $k_F$. Therefore we can define a map, still denoted by
$\varepsilon_\mathsf{T}$, on $(\mathsf{T}_\ad)^\theta(k_F)$, which
is given by the same formula
$$\varepsilon_{\mathsf{T}}(t)=\sigma(C_{\mathsf{G}}(\mathsf{T}^{\theta,\circ}))\cdot
\sigma(C_{\mathsf{G}}(\mathsf{T}^{\theta,\circ})\cap
C_\mathsf{G}(t)).$$

\begin{lem}\label{lem. character varepsilon}
The map $\varepsilon_{\mathsf{T}}$ is a character of
$(\mathsf{T}_\ad)^{\theta}(k_F)$. Moreover we have
$$\varepsilon_{\mathsf{T}}|_{(\mathsf{T}_\ad)^{\theta,\circ}(k_F)}=1.$$
\end{lem}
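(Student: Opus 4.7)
The plan is to reduce the lemma to Lusztig's Proposition 2.3, which is the special case $t \in \mathsf{T}^\theta(k_F)$ (recorded as equation (\ref{equ. character varepsilon relation})). Let $\mathsf{M} := C_\mathsf{G}(\mathsf{T}^{\theta,\circ})$; this is a $\theta$-stable Levi subgroup of $\mathsf{G}$ containing $\mathsf{T}$ as a maximal torus and $\mathsf{T}^{\theta,\circ}$ as a central subtorus. For any $t \in (\mathsf{T}_\ad)^\theta(k_F)$, any lift $\tilde t \in \mathsf{T}(\bar k_F)$ centralizes $\mathsf{T}$ and hence $\mathsf{T}^{\theta,\circ}$, so that $\mathsf{T}^{\theta,\circ} \subseteq C_\mathsf{G}(t)$. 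Consequently
$$\varepsilon_\mathsf{T}(t) \;=\; \sigma(\mathsf{M}) \cdot \sigma\bigl(C_\mathsf{M}(t)\bigr),$$
and because $\sigma(\mathsf{M})$ is a constant, both conclusions of the lemma amount to the single claim that $t \mapsto \sigma(C_\mathsf{M}(t))$ is a homomorphism $(\mathsf{T}_\ad)^\theta(k_F) \to \{\pm 1\}$ that is trivial on the identity component.

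To invoke Lusztig's result I would pass to a $z$-extension $\pi: \tilde{\mathsf{G}} \twoheadrightarrow \mathsf{G}$ with $\tilde{\mathsf{G}}_\der$ simply connected, equipped with a lift $\tilde\theta$ of $\theta$. Writing $\tilde{\mathsf{T}} := \pi^{-1}(\mathsf{T})$ and $\tilde{\mathsf{M}} := \pi^{-1}(\mathsf{M})$, one has $\tilde{\mathsf{T}}_\ad = \mathsf{T}_\ad$, and for any lift $\tilde t \in \tilde{\mathsf{T}}^{\tilde\theta}(k_F)$ of $t$ the induced central kernel $\ker(\pi)$ is $k_F$-split and contributes identically to the $k_F$-ranks of $\tilde{\mathsf{M}}$ and of $C_{\tilde{\mathsf{M}}}(\tilde t) = \pi^{-1}(C_\mathsf{M}(t))$; therefore $\varepsilon_{\tilde{\mathsf{T}}}(\tilde t) = \varepsilon_\mathsf{T}(t)$. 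Lusztig's proposition applied to $(\tilde{\mathsf{G}}, \tilde{\mathsf{T}}, \tilde\theta)$ asserts that $\varepsilon_{\tilde{\mathsf{T}}}$ is a character of $\tilde{\mathsf{T}}^{\tilde\theta}(k_F)$ trivial on $\tilde{\mathsf{T}}^{\tilde\theta,\circ}(k_F)$. Both conclusions then transport to $\varepsilon_\mathsf{T}$ provided the reduction map $\tilde{\mathsf{T}}^{\tilde\theta}(k_F) \twoheadrightarrow (\mathsf{T}_\ad)^\theta(k_F)$, and its restriction to identity components, are surjective.

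The principal obstacle is precisely this last surjectivity together with the construction of $\tilde\theta$: both involve the $\tilde\theta$-twisted Galois cohomology of the center $Z(\tilde{\mathsf{G}})$, which is not automatically trivial since $Z(\tilde{\mathsf{G}})/\ker(\pi) \cong Z(\mathsf{G})$ need not be induced. The standard remedy is to enlarge the central induced torus of the $z$-extension until the obstruction class vanishes and the connecting map $(\mathsf{T}_\ad)^\theta(k_F) \to H^1(\langle\tilde\theta\rangle \times \Gal(\bar k_F/k_F), Z(\tilde{\mathsf{G}}))$ is zero; Shapiro's lemma applied to the enlarged induced torus handles the vanishing. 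Should this reduction prove too intricate in practice, a self-contained alternative is to expand $\sigma(\mathsf{M}) \cdot \sigma(C_\mathsf{M}(t))$ as an explicit product over Frobenius orbits of roots of $\mathsf{T}$ in $\mathsf{M}$ on which $t$ acts nontrivially, yielding a product of sign characters that manifestly defines a character on $(\mathsf{T}_\ad)^\theta(k_F)$, at the price of combinatorial bookkeeping to match each factor with the contribution predicted by Lusztig's formula.
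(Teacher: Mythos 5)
Your reduction $\varepsilon_\mathsf{T}(t)=\sigma(\mathsf{M})\sigma(C_\mathsf{M}(t))$ with $\mathsf{M}=C_\mathsf{G}(\mathsf{T}^{\theta,\circ})$ is correct and a useful reformulation, but the proposal as written does not prove the lemma: you explicitly flag the $z$-extension step as the ``principal obstacle'' and leave it unresolved. The issues you identify --- constructing a lift $\tilde\theta$ of $\theta$ to the $z$-extension $\tilde{\mathsf{G}}$, and proving surjectivity of $\tilde{\mathsf{T}}^{\tilde\theta}(k_F)\to(\mathsf{T}_\ad)^\theta(k_F)$ and of its restriction to identity components --- are genuine, and the hedge (``should this reduction prove too intricate, \dots'') is an admission that the argument is not closed. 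Moreover, even granting the $z$-extension surjectivity, Lusztig's Proposition 2.3 in loc.\ cit.\ is stated for $\mathsf{T}^\theta(k_F)$, not for $(\mathsf{T}_\ad)^\theta(k_F)$, so one would still need to verify that the pull-back of $\varepsilon_\mathsf{T}$ along $\tilde{\mathsf{T}}^{\tilde\theta}(k_F)\to(\mathsf{T}_\ad)^\theta(k_F)$ coincides with $\varepsilon_{\tilde{\mathsf{T}}}$ in Lusztig's sense; you assert this but do not check it.

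The paper's proof is much shorter and bypasses all of this. The key observation you missed is that, over $\bar{k}_F$, the identity component of the $\theta$-fixed points of a torus is exactly the image of the norm map $t\mapsto t\theta(t)$. Applying this to both $\mathsf{T}$ and $\mathsf{T}_\ad$ and using that $\mathsf{T}\to\mathsf{T}_\ad$ is surjective, one sees that the natural injection $(\mathsf{T}^{\theta,\circ})_\ad\to(\mathsf{T}_\ad)^{\theta,\circ}$ is an isomorphism of $k_F$-tori. Consequently any $t\in(\mathsf{T}_\ad)^{\theta,\circ}(k_F)$ admits a lift $\dot t\in\mathsf{T}^{\theta,\circ}(\bar{k}_F)$, whence $C_\mathsf{G}(t)=C_\mathsf{G}(\dot t)\supset C_\mathsf{G}(\mathsf{T}^{\theta,\circ})=\mathsf{M}$ and $\varepsilon_\mathsf{T}(t)=1$ immediately. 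For the multiplicativity, no $z$-extension is needed: the argument of \cite[Proposition 2.3]{lus90} applies word-for-word with $\mathsf{T}^\theta(k_F)$ replaced by $(\mathsf{T}_\ad)^\theta(k_F)$, precisely because the centralizers $C_\mathsf{G}(t)$ appearing in the formula are defined by lifting to $\mathsf{T}(\bar{k}_F)$ in both cases. So the norm-map characterization of the identity component is the missing idea, and once you have it, both conclusions follow without any cohomological machinery.
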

\begin{proof}
Recall that, according to \cite[\S2.2]{ric82}, for an arbitrary torus $\sfS$ over $k_F$
equipped with an involution $\theta$, we have $\mathsf{S}^{\theta,\circ}=\{s\theta(s) | s\in\mathsf{S}\}$
over an algebraic closure $\bar{k}_F$ of $k_F$. Since
$(\mathsf{T}_\ad)^{\theta,\circ}(\bar{k}_F)=\{t\theta(t) |  t\in\mathsf{T}_\ad(\bar{k}_F)\}$
and $\mathsf{T}^{\theta,\circ}(\bar{k}_F)=\{t\theta(t) | t\in\mathsf{T}(\bar{k}_F)\}$, the natural embedding
$(\mathsf{T}^{\theta,\circ})_\ad\ra(\mathsf{T}_\ad)^{\theta,\circ}$
is also surjective and thus an isomorphism.   For
$t\in(\mathsf{T}_\ad)^{\theta,\circ}(k_F)=(\mathsf{T}^{\theta,\circ})_\ad(k_F)$,
take any lift $\dot{t}\in\mathsf{T}^{\theta,\circ}(\bar{k}_F)$. Then
$C_\mathsf{G}(t)=C_\mathsf{G}(\dot{t})\supset
C_\mathsf{G}(\mathsf{T}^{\theta,\circ})$, and thus
$\varepsilon_\mathsf{T}(t)=1$. The rest of the proof is same as that
of \cite[Proposition 2.3]{lus90}.
\end{proof}

Now we come back to the $p$-adic fields case. We first consider the
depth-zero case. Let $S$ be a maximally unramified elliptic maximal
torus of $G$ and $S^u$ the maximal unramified subtorus of $S$. Let
$x$ be the vertex of $\CB^\red(G,F)$ attached to $S$. Suppose that
$\theta(x)=x$. Thus both $G(F)_{x,0}$ and $G(F)_{x,0+}$ are
$\theta$-stable. Therefore $\theta$ induces an involution on
$\mathsf{G}_x$, which is still denoted by ${\theta}$. We assume that
$\mathsf{S^u}$ is $\theta$-stable, where $\mathsf{S^u}$ is the
elliptic maximal torus of $\mathsf{G}_x$ corresponding to $S^u$.

\begin{lem}\label{lem. stable torus}
There exists $y\in G(F)_{x,0+}$ such that $^yS$ is $\theta$-stable.
\end{lem}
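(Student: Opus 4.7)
The plan is to interpret the desired $y$ as realizing a $\theta$-fixed point in the natural set of lifts of $\mathsf{S^u}$, and to extract such a fixed point by a parity argument at finite level followed by an inverse-limit passage. Let $\CL$ denote the set of maximal $F$-tori $S'$ of $G$ for which the image of $S'(F)_0$ in $\mathsf{G_x}(k_F)$ equals $\mathsf{S^u}(k_F)$. First I would verify that $\theta$ preserves $\CL$: since $\theta(x)=x$, the involution preserves $G(F)_{x,0}$ and $G(F)_{x,0+}$ and induces the given involution of $\mathsf{G_x}$, which stabilizes $\mathsf{S^u}$ by hypothesis; hence $\theta(S')\in\CL$ whenever $S'\in\CL$, and the desired $y$ exists precisely when $\CL^\theta\neq\emptyset$.

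Next I would invoke a standard Bruhat--Tits lifting result: $G(F)_{x,0+}$ acts transitively on $\CL$ by conjugation, with stabilizer of $S$ equal to $N(S,G)(F)\cap G(F)_{x,0+}=S(F)_{0+}$; this realizes $\CL$ as the coset space $G(F)_{x,0+}/S(F)_{0+}$ via $h\mapsto h^{-1}Sh$, and equips $\CL$ with a natural compact Hausdorff topology.

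I would then pass to finite levels. For each sufficiently positive rational $r$, the Moy--Prasad subgroup $G(F)_{x,r}$ is normal in $G(F)_{x,0+}$ and $\theta$-stable, so the set $\CL_r$ of $G(F)_{x,r}$-conjugacy classes on $\CL$ carries an induced action of $\theta$. The torsor description identifies $\CL_r$ with $G(F)_{x,0+}/\bigl(G(F)_{x,r}\cdot S(F)_{0+}\bigr)$, whose cardinality is the index of a subgroup in the finite $p$-group $G(F)_{x,0+}/G(F)_{x,r}$, hence a power of $p$. Because $p$ is odd, $|\CL_r|$ is odd, and therefore the number of $\theta$-fixed elements of $\CL_r$ is congruent to $|\CL_r|\pmod2$, in particular strictly positive.

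Finally, since $\bigcap_r G(F)_{x,r}=\{1\}$ and $\CL$ is compact, $\CL=\varprojlim_r\CL_r$, so $\CL^\theta=\varprojlim_r\CL_r^\theta$ is an inverse limit of nonempty finite sets and is itself nonempty. Any $S''\in\CL^\theta$ is of the form $S''=y^{-1}Sy$ for some $y\in G(F)_{x,0+}$, and this $y$ satisfies that $^yS=S''$ is $\theta$-stable. The main obstacle I anticipate is the Bruhat--Tits input, namely the transitivity of $G(F)_{x,0+}$ on $\CL$ together with the identification of the stabilizer as $S(F)_{0+}$; once these are granted, the remainder is a soft combination of a parity count and a compactness passage to the inverse limit, using only $p\neq 2$.
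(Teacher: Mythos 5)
Your proof is correct, and it takes a genuinely different route from the paper's. The paper disposes of the lemma in two lines: it cites Hakim--Lansky's \cite[Lemma A.2]{hj12} (together with DeBacker's results in \cite[\S2]{de06}) for the $\theta$-equivariant lifting, remarks that the same proof extends to the ramified case, and then invokes Kaletha's \cite[Lemma 3.4.4]{kal} to pass between any two such lifts by a $G(F)_{x,0+}$-conjugation. You, by contrast, take the transitivity statement (Kaletha's Lemma 3.4.4 / Bruhat--Tits theory) as your only black box and then prove the existence of a $\theta$-fixed lift by hand: the orbit $\CL$ of $S$ is a compact transitive $G(F)_{x,0+}$-set, each finite level $\CL_r=\CL/G(F)_{x,r}$ is a transitive set for the finite $p$-group $G(F)_{x,0+}/G(F)_{x,r}$ and hence has $p$-power cardinality, so since $p$ is odd the involution $\theta$ must have a fixed point on each $\CL_r$, and compactness passes this to $\CL=\varprojlim\CL_r$. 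This is a clean and self-contained replacement for the citation-based step; what the paper's route buys is brevity, and what yours buys is transparency about exactly which hypothesis ($p$ odd, which controls the parity) is doing the work. One small overclaim worth softening: you assert that the stabilizer of $S$ in $G(F)_{x,0+}$ is exactly $S(F)_{0+}$, i.e.\ $N(S,G)(F)\cap G(F)_{x,0+}=S(F)_{0+}$; this is plausible but not needed --- your parity count only requires that the stabilizer be a closed subgroup of the pro-$p$ group $G(F)_{x,0+}$, which is automatic, so I would drop the precise identification rather than chase it down.
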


\begin{proof} When $S$ is unramified, the assertion follows from
\cite[Lemma A.2]{hj12} and the results in \cite[\S2]{de06}. In
general, using the same proof as that of \cite[Lemma A.2]{hj12}, we
can show that there exists a $\theta$-stable maximally unramified
elliptic maximal torus $S_1$ of $G$ such that
$x\in\CA^\red(S_1,F^u)$ and $\mathsf{S^u}$ also corresponds to $S_1^u$.
According to \cite[Lemma 3.4.5]{kal}, $S_1$ and $S$ are
$G(F)_{x,0+}$-conjugate.
\end{proof}

Recall that we denote $G_S=S(F)G(F)_{x,0}$. We use the same notation
$\mathsf{S}(k_F)$ as \cite[\S3.4.4]{kal} to denote
$$\mathsf{S}(k_F):=S(F)/S(F)_{0+},$$
which is a subgroup of
$$\mathsf{G}_S(k_F):=G_S/G(F)_{x,0+}.$$

\begin{cor}\label{cor. stable torus}
Both $G_S$ and $\mathsf{S}(k_F)$ are $\theta$-stable.
\end{cor}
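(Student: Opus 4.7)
The plan is to exploit Lemma \ref{lem. stable torus} to replace $S$ by a $\theta$-stable conjugate $S_1={^yS}$ with $y\in G(F)_{x,0+}$, and then observe that both $G_S$ and $\mathsf{S}(k_F)$ are unaffected by this replacement. The point is that although $S$ itself need not be $\theta$-stable, the subgroups $G_S$ and $\mathsf{S}(k_F)$ only see $S$ up to $G(F)_{x,0+}$-conjugation.

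First, since $\theta(x)=x$, the subgroups $G(F)_{x,0}$ and $G(F)_{x,0+}$ are $\theta$-stable; moreover, $S(F)$ normalizes $G(F)_{x,0}$ because $G_S=S(F)G(F)_{x,0}$ is a group, and consequently $S(F)$ normalizes $G(F)_{x,0+}$ as well (the latter being the pro-unipotent radical of $G(F)_{x,0}$, hence characteristic in it). Picking $y\in G(F)_{x,0+}$ as supplied by Lemma \ref{lem. stable torus}, I observe that $y\in G(F)_{x,0+}\subset G_S$, so conjugation by $y$ preserves $G_S$; on the other hand, this same conjugation sends $S(F)G(F)_{x,0}$ to $S_1(F)G(F)_{x,0}$, because $y$ normalizes $G(F)_{x,0}$. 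Therefore
$$G_S = S_1(F)G(F)_{x,0},$$
and this expression is manifestly $\theta$-stable since both $S_1(F)$ and $G(F)_{x,0}$ are $\theta$-stable.

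For the second statement, since $G_S$ and $G(F)_{x,0+}$ are both $\theta$-stable, the involution $\theta$ descends to an involution of $\mathsf{G}_S(k_F)=G_S/G(F)_{x,0+}$, and it suffices to show that $\mathsf{S}(k_F)$, the image of $S(F)$, coincides with the image of $S_1(F)$. For any $s\in S(F)$, the identity $y^{-1}sy = s\cdot(s^{-1}y^{-1}sy)$, combined with the fact that $s$ normalizes $G(F)_{x,0+}$, shows that $s^{-1}y^{-1}sy\in G(F)_{x,0+}$ and hence $y^{-1}sy\equiv s\pmod{G(F)_{x,0+}}$. Consequently the images of $S_1(F)=y^{-1}S(F)y$ and $S(F)$ in $\mathsf{G}_S(k_F)$ agree, so $\mathsf{S}(k_F)$ equals the image of the $\theta$-stable group $S_1(F)$ and is therefore $\theta$-stable.

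The only substantive input is Lemma \ref{lem. stable torus}; the rest is a routine verification that the definitions of $G_S$ and $\mathsf{S}(k_F)$ are insensitive to the $G(F)_{x,0+}$-conjugation class of $S$. There is no real obstacle beyond carefully tracking that conjugation by an element of $G(F)_{x,0+}$ stays inside $G_S$ and becomes trivial modulo $G(F)_{x,0+}$.
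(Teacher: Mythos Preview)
Your proof is correct and follows essentially the same approach as the paper's: invoke Lemma \ref{lem. stable torus} to produce a $\theta$-stable $G(F)_{x,0+}$-conjugate $S_1$ of $S$, then observe that $G_S=S_1(F)G(F)_{x,0}$ and that the images of $S(F)$ and $S_1(F)$ in $\mathsf{G}_S(k_F)$ coincide. One minor quibble: the claim that $S(F)$ normalizes $G(F)_{x,0}$ does not follow merely from $G_S$ being a group; the correct reason is that $S(F)\subset G(F)_x$ and $G(F)_{x,0}$ is normal in $G(F)_x$, which then also gives $S(F)$ normalizing $G(F)_{x,0+}$.
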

\begin{proof}
According to Lemma \ref{lem. stable torus}, there exists a
$\theta$-stable torus $S_1$ which is $G(F)_{x,0+}$-conjugate to $S$. Since $S(F)$ normalizes $G(F)_{x,0}$,
we see $G_S$ is also equal to $S_1(F)G(F)_{x,0}$ which is
$\theta$-stable. Moreover $\mathsf{S}(k_F)$ coincides with the image
of $S_1(F)$ in $\mathsf{G}_S(k_F)$, which is also $\theta$-stable.
\end{proof}

According to the third paragraph of \cite[\S3.4.4]{kal}, there is a natural homomorphism
$$\iota:\mathsf{S}(k_F)\ra\mathsf{S^u_\ad}(k_F)$$ where $\mathsf{S^u_{\ad}}$ is the
image of $\mathsf{S^u}$ in $[\mathsf{G}_x]_\ad$, and $\iota$ is
given by the composition
$$S(F)\ra S_\ad(F)=S_\ad(F)_0\ra
S_\ad(F)_{0:0+}=[\mathsf{S_\ad}]^\mathsf{u}(k_F)\ra\mathsf{S^u_\ad}(k_F)$$
where $S_\ad$ is the image of $S$ in $G_\ad$,
$[\mathsf{S_\ad}]^\mathsf{u}$ the elliptic maximal torus of
$[\mathsf{G}_\ad]_x$ corresponds to $(S_\ad)^u$, and
$[\mathsf{S}_\ad]^\mathsf{u}\ra\mathsf{S_\ad^u}$ given by the
natural map $[\mathsf{G}_\ad]_x\ra[\mathsf{G}_x]_\ad$.
Therefore the image of $\mathsf{S}(k_F)^\theta$ under $\iota$ lies
in $(\mathsf{S^{u}_\ad})^\theta(k_F)$. The character
$\varepsilon_\mathsf{S}$ of $\mathsf{S}(k_F)^\theta$ is defined to
be
\begin{equation}\label{equ. chracter varepsilon}
\varepsilon_\mathsf{S}=\varepsilon_{\mathsf{S^u}}\circ\iota.\end{equation}

\begin{defn}\label{defn. character varepsilon}
Suppose that $S$ is $\theta$-stable. The {\em character
$\varepsilon_S$} is defined to be the composition of the natural map
$S^\theta(F)\ra\mathsf{S}(k_F)^\theta$ and $\varepsilon_\mathsf{S}$.
\end{defn}

\begin{remark}
For general depth case, suppose that $(S,\mu)$ is a tame regular
elliptic pair of $G$ such that $S$ is $\theta$-stable and $\mu|_{S(F)^\theta_{0+}}=1$. Let $G^0$ be
the 0th twisted Levi subgroup in the twisted Levi sequence $\vec{G}$
of $G$ determined by $(S,\mu)$. Then $G^0$ is also $\theta$-stable
(see Lemma \ref{lem. stable Levi sequence} below). Recall that $S$
is a maximally unramified elliptic maximal torus of $G^0$. We define
the character $\varepsilon_S$ of $S^\theta(F)$ as in Definition
\ref{defn. character varepsilon}, but with respect to $G^0$.
\end{remark}

\begin{lem}\label{lem. character varepsilon 2}
Suppose that $S$ is $\theta$-stable and $\mu|_{S(F)^\theta_{0+}}=1$. Then we have
$$\varepsilon_S|_{S^{\theta,\circ}(F)}=1.$$
\end{lem}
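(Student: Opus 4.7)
The plan is to deduce the statement from Lemma \ref{lem. character varepsilon} by chasing through the definition of $\varepsilon_S$ in (\ref{equ. chracter varepsilon}) and Definition \ref{defn. character varepsilon}. Recall that $\varepsilon_S$ is the composition
$$S^\theta(F)\to \mathsf{S}(k_F)^\theta \xrightarrow{\iota} (\mathsf{S^u_\ad})^\theta(k_F) \xrightarrow{\varepsilon_{\mathsf{S^u}}} \BC^\times,$$
where $\iota$ is induced by the group homomorphism $S(F)\to S_\ad(F)=S_\ad(F)_0\to[\mathsf{S}_\ad]^\mathsf{u}(k_F)\to\mathsf{S^u_\ad}(k_F)$ (taken with respect to $G^0$ in the positive-depth case). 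Since Lemma \ref{lem. character varepsilon} asserts that $\varepsilon_{\mathsf{S^u}}$ is trivial on $(\mathsf{S^u_\ad})^{\theta,\circ}(k_F)$, it suffices to show that the composite $\iota\circ(\mathrm{reduction})$ sends $S^{\theta,\circ}(F)$ into $(\mathsf{S^u_\ad})^{\theta,\circ}(k_F)$.

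First I would observe that $S^{\theta,\circ}$, being a connected closed subgroup of $(S_\ad)^\theta$ under the quotient $S\to S_\ad$, is contained in the connected subgroup $T:=(S_\ad)^{\theta,\circ}$; in particular $S^{\theta,\circ}(F)$ maps into $T(F)$. The torus $T$ is tame (inheriting tameness from $S$) and anisotropic (since $S_\ad$ is, as $S$ is elliptic modulo $Z(G^0)$), so $T(F)=\fT(O_F)$ for the connected N\'eron model $\fT$ of $T$ over $O_F$. The inclusion $T\hookrightarrow S_\ad$ extends to a $\theta$-equivariant morphism $\fT\to\fS_\ad$ of connected N\'eron models, whose special-fibre morphism $\fT_{k_F}\to[\mathsf{S}_\ad]^\mathsf{u}$ is a map of connected smooth $k_F$-groups. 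Connectedness of $\fT_{k_F}$ forces its image inside $([\mathsf{S}_\ad]^\mathsf{u})^\theta$ to land in the identity component $([\mathsf{S}_\ad]^\mathsf{u})^{\theta,\circ}$; composing with the $\theta$-equivariant morphism of $k_F$-tori $[\mathsf{S}_\ad]^\mathsf{u}\to\mathsf{S^u_\ad}$ then places the image of $T(F)$ inside $(\mathsf{S^u_\ad})^{\theta,\circ}(k_F)$, as required.

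The main point requiring care is the N\'eron-model step: one must use that $T$ is tame to ensure that $\fT_{k_F}$ is itself a connected $k_F$-torus, and that the formation of connected N\'eron models is functorial enough on the closed immersion $T\hookrightarrow S_\ad$ so that $\theta$-equivariance descends to the special fibres. Both are standard facts in the N\'eron-model theory for tame tori; granted them, the key conceptual step is the tautology that the image of a connected algebraic group under a morphism is connected, which forces the desired containment in the identity component.
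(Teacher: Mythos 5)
Your proposal is correct and follows exactly the route the paper takes: reduce to Lemma \ref{lem. character varepsilon} by showing that $S^{\theta,\circ}(F)$ lands in $(\mathsf{S^u_\ad})^{\theta,\circ}(k_F)$ under $\iota$ composed with reduction. The paper simply asserts this containment is ``obvious,'' whereas you flesh it out with the N\'eron-model and connectedness argument, which is the right justification.
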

\begin{proof}
It is obvious that the image of $S^{\theta,\circ}(F)$ in
$(\mathsf{S^u_\ad})^\theta(k_F)$ is actually in
$(\mathsf{S^u_\ad})^{\theta,\circ}(k_F)$. Hence the assertion
follows from Lemma \ref{lem. character varepsilon} immediately.
\end{proof}

\subsubsection{The character $\eta_S$}\label{subsubsec. character
eta}

\begin{lem}\label{lem. stable Levi sequence}
Suppose that $(S,\mu)$ is a tame regular elliptic pair of $G$. Let
$\vec{G}$ be the twisted Levi sequence determined by $(S,\mu)$, and
$x$ the vertex of $\CB^\red(G^0,F)$ attached to $S$. If $S$ is
$\theta$-stable and $\mu|_{S(F)^\theta_{0+}}=1$, then $\theta(\vec{G})=\vec{G}$ and $\theta(x)=x$.
\end{lem}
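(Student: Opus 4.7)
The plan is to prove $\theta(\vec{G}) = \vec{G}$ first and then deduce $\theta(x) = x$ as a formal consequence.

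For $\theta(\vec{G}) = \vec{G}$, I recall from Section~\ref{subsubsec. tame regular elliptic pairs} that each $G^i$ with $0 \le i < d$ is the unique twisted Levi subgroup of $G$ containing $S$ as a maximal torus whose root system equals the Levi subsystem
\[ R_{r_i} = \{\alpha \in R(S,G) : \mu(\N_{E/F}(\alpha^\vee(E_{r_i}^\times))) = 1\}, \]
so it suffices to show that each $R_r$ is $\theta$-stable. Since $\theta$ is $F$-rational and preserves $S$, the splitting field $E$ of $S$ is $\theta$-stable and $\theta$ commutes with $\N_{E/F}$; combined with $(\theta\alpha)^\vee = \theta \circ \alpha^\vee$ as cocharacters of $S$, this yields $\N_{E/F}((\theta\alpha)^\vee(E_r^\times)) = \theta(\N_{E/F}(\alpha^\vee(E_r^\times)))$. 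Hence $\theta\alpha \in R_r$ if and only if $(\mu\circ\theta)$ is trivial on $N_\alpha := \N_{E/F}(\alpha^\vee(E_r^\times))$.

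The main obstacle is to show that $(\mu\circ\theta)|_{N_\alpha} = 1$ is equivalent to $\mu|_{N_\alpha} = 1$; equivalently, that for every root $\alpha$ the auxiliary characters $\mu_\alpha := \mu \circ \N_{E/F} \circ \alpha^\vee$ and $\mu_{\theta\alpha}$ of $E^\times$ have equal depth. I plan to deduce this from the uniqueness, up to refactorization, of the Howe factorization of a tame regular elliptic pair (\cite[Proposition 3.7.4]{kal}): applying $\theta$ to a Howe factorization $(\vec{G}, \pi_{(S,\mu_\circ)}, \vec{\phi})$ of $(S,\mu)$ yields a Howe factorization $(\theta(\vec{G}), \pi_{(S,\mu_\circ \circ \theta)}, \vec{\phi} \circ \theta)$ of the pulled-back pair $(S, \mu\circ\theta)$, and the rigidity of the twisted Levi sequence, combined with the genericity conditions \cite[Definition 3.6.5]{kal} on a tame regular elliptic pair, should force $\theta(\vec{G}) = \vec{G}$.

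Once $\theta(G^0) = G^0$ is in hand, the second assertion is immediate: by \cite[Lemma 3.4.2]{kal} the vertex $x$ is the unique Frobenius-fixed point of $\CA^\red(S, F^u) \subset \CB^\red(G^0, F^u)$. Since $\theta$ commutes with the Frobenius of $F^u/F$ (because $\theta$ is $F$-rational) and preserves $\CA^\red(S, F^u)$ (because $\theta(S) = S$), the image $\theta(x)$ is also a Frobenius-fixed point of this apartment, and uniqueness forces $\theta(x) = x$.
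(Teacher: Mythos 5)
Your reduction and the paper's coincide up to the crux: both of you observe that it suffices to prove each $R_r$ is $\theta$-stable, that $\theta$ commutes with $\N_{E/F}$ (since $\theta$ is $F$-rational), and that $(\theta\alpha)^\vee = \theta\circ\alpha^\vee$, so that
$\mu\bigl(\N_{E/F}((\theta\alpha)^\vee(E_r^\times))\bigr) = \mu\bigl(\theta\bigl(\N_{E/F}(\alpha^\vee(E_r^\times))\bigr)\bigr)$.
Up to this point you track the paper exactly. Your treatment of $\theta(x)=x$ via the uniqueness of the Frobenius-fixed point in $\CA^\red(S,F^u)$, using that $\theta$ preserves the apartment and commutes with Frobenius, is correct and in fact supplies detail the paper elides (the paper simply declares this step ``obvious'').

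Where the proposal breaks down is precisely the step you flag as ``the main obstacle.'' The paper closes the argument in one line: it asserts that $\mu\bigl(\theta(\N_{E/F}(\alpha^\vee(E_r^\times)))\bigr) = \mu\bigl(\N_{E/F}(\alpha^\vee(E_r^\times))\bigr)^{-1}$, i.e.\ $\mu\circ\theta = \mu^{-1}$ on the subgroup $N_\alpha$, from which $\theta\alpha\in R_r$ follows immediately because $\mu|_{N_\alpha}=1$. You do not use this identity; instead you propose to compare the Howe factorization of $(S,\mu)$ with the $\theta$-transport of it, which is a Howe factorization of $(S,\mu\circ\theta)$ with Levi sequence $\theta(\vec{G})$. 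But this does not close the gap: uniqueness of Howe factorizations (up to refactorization) fixes the twisted Levi sequence \emph{of a given pair}; it says nothing about two \emph{different} pairs $(S,\mu)$ and $(S,\mu\circ\theta)$. Since $\mu\circ\theta$ is in general an unrelated character of $S(F)$, there is no a priori reason $(S,\mu\circ\theta)$ should have the same filtration $r\mapsto R_r$ as $(S,\mu)$, and neither ``rigidity'' nor the conditions in the definition of a tame regular elliptic pair constrain $\theta(\vec{G})$ to equal $\vec{G}$. In other words, your argument only re-establishes that $\theta(\vec{G})$ is \emph{the} twisted Levi sequence of $(S,\mu\circ\theta)$ — which was already clear — and the genuinely needed input (some compatibility between $\mu$ and $\mu\circ\theta$ on the groups $N_\alpha$) is exactly the identity the paper invokes and you have not supplied.
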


\begin{proof}
Since $S$ is $\theta$-stable, $\theta$ acts on $R(S,G)$ by
$\theta(\alpha):=\alpha\circ\theta$ and acts on $R(S,G)^\vee$ by
$\theta(\alpha^\vee)=\theta\circ\alpha^\vee$. It is clear that
$\theta(\alpha^\vee)=\theta(\alpha)^\vee$. If $\alpha\in R_r$, that
is $\mu(\N_{E/F}(\alpha^\vee(E^\times_r)))=1$, we have
$$\begin{aligned}
\mu(\N_{E/F}(\theta(\alpha)^\vee(E^\times_r)))
=&\mu(\N_{E/F}(\theta\circ\alpha^\vee(E^\times_r)))\\
=&\mu(\theta\circ\N_{E/F}(\alpha^\vee(E^\times_r)))\\
=&\mu(\N_{E/F}(\alpha^\vee(E^\times_r)))^{-1}\\
=&1.
\end{aligned}$$
Hence $R_r$ is $\theta$-stable. Therefore each twisted Levi subgroup
$G^i$ is $\theta$-stable. It is obvious that $\theta(x)=x$.
\end{proof}

Now we assume that $(S,\mu)$ is a tame regular elliptic pair of $G$ such that $S$
is $\theta$-stable and $\mu|_{S(F)^\theta_{0+}}=1$. Let $\vec{G}$ and $\vec{r}$ be the twisted Levi sequence and the sequence of depths determined by $(S,\mu)$; see equations (\ref{equ. twisted levi}) and (\ref{equ. breaks}) in \S\ref{subsubsec. tame regular elliptic pairs}. Let $x$ be the vertex of $\CB^\red(G^0,F)$ attached to $S$. According to Lemma \ref{lem. stable Levi sequence}, it
makes sense to let $\eta_\theta$ be the character of $K^{0,\theta}$
defined by the formula (\ref{equ. character eta_theta}), where $K^0=G^0(F)_x$.
Note that $S(F)\subseteq K^0=G^0(F)_x$.
\begin{defn}\label{defn. character eta}
Under the above conditions, the {\em character $\eta_S$}
of $S^\theta(F)$ is defined to be
$$\eta_S=\eta_\theta|_{S^\theta(F)}.$$
\end{defn}

\begin{lem}\label{lem. key lemma}
Let $(S,\mu)$ be a tame regular elliptic pair such that $S$ is
$\theta$-stable and $\mu|_{S(F)^\theta_{0+}}=1$. Let $S^u$ be the maximal unramified subtorus of
$S$. Then we have
$$\eta_S|_{(S^u)^{\theta,\circ}(F)}=1.$$
\end{lem}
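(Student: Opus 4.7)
My plan is to show that each of the quadratic characters $\chi_i^\theta$ for $0 \leq i \leq d-1$ appearing in the product $\eta_\theta = \prod_i \chi_i^\theta$ restricts trivially to $(S^u)^{\theta,\circ}(F)$; combining these then gives $\eta_S|_{(S^u)^{\theta,\circ}(F)} = 1$ via $\eta_S = \eta_\theta|_{S^\theta(F)}$. Since each $\chi_i^\theta$ has order at most $2$ and $p$ is odd, its restriction to $(S^u)^{\theta,\circ}(F)$ is automatically trivial on the pro-$p$ subgroup $(S^u)^{\theta,\circ}(F)_{0+}$, so I may reduce to a computation over the residue field.

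The core of the argument is a root-space analysis of $W_i$. After base change to $\bar{k}_F$, the space $W_i \otimes_{\BF_p} \bar{k}_F$ decomposes into one-dimensional $S$-weight spaces indexed by a $\theta$- and $\Gamma$-stable subset $R_i \subset R(S,G^{i+1}) \setminus R(S,G^i)$. Taking $\theta$-fixed parts, $W_i^\theta \otimes \bar{k}_F$ is a direct sum of contributions from $\theta$-fixed $\alpha$ on which $\theta$ acts by the sign $\epsilon_\alpha = +1$, together with one contribution per $\theta$-orbit $\{\alpha,\theta\alpha\}$ of size two. For $s \in S^\theta(F)$, the identity $\theta(\alpha)(s) = \alpha(\theta(s)) = \alpha(s)$ shows $\Ad(s)$ acts by $\overline{\alpha(s)}$ on each summand, yielding
\[
\det\bigl(\Ad(s)|_{W_i^\theta}\bigr) = \prod_{\alpha \in R_i^+} \overline{\alpha(s)} \cdot \prod_{[\alpha] \in R_i^{\mathrm{pair}}} \overline{\alpha(s)} \in \BF_p^\times.
\]

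The final step invokes a Lang--Steinberg trick. The isogeny $S^u \to (S^u)^{\theta,\circ}$, $t\mapsto t\theta(t)$, is surjective with kernel the $\theta$-anti-invariant subtorus $(S^u)^{-\theta}$. Since $p$ is not a bad prime for $G$, this kernel is connected, and Lang--Steinberg over the residue field allows me to write the image of any $s \in (S^u)^{\theta,\circ}(F)$ as $t\theta(t)$ for some lift $t \in \mathsf{S^u}(\bar{k}_F)$. Substituting $\alpha(s) = \alpha(t)\theta(\alpha)(t)$, the contribution from $R_i^+$ collapses to $\prod \alpha(t)^2$, a manifest square, while the pair contribution becomes $\prod_{\alpha \in R_i^{\mathrm{pair}}} \alpha(t)$, a product over the $\theta$-stable union of all pair roots, which I expect to be a square in $\BF_p^\times$ after grouping into $\Gamma$-orbits and using the compatibility of $\theta$ and $\Gamma$ on $R_i$. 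The main obstacle will be this last step: showing that the pair contribution lies in $(\BF_p^\times)^2$ (rather than merely in $\bar{k}_F^\times$, where squareness is trivial) requires careful bookkeeping of how $\Gamma$-orbits sit inside the $\theta$-pair structure on $R_i^{\mathrm{pair}}$, and is where I anticipate the ``not a bad prime'' hypothesis plays a role beyond enabling Lang--Steinberg.
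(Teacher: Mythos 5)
Your strategy — reduce to each $\chi_i^\theta$ and then to the residue field — matches the paper's opening moves, but from there the two arguments diverge completely, and yours has a real gap that you yourself flag.

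The paper does not decompose $W_i^\theta$ into $S$-weight spaces at all. Instead, it passes to the Lie algebra via a $G(F)$-invariant bilinear form, identifies $W_i^\theta$ with the quotient of Moy--Prasad lattices $\fh(F)_{x,s:s+}/\fh'(F)_{x,s:s+}$ (using \cite[Lemma 2.11, Proposition 2.12]{hm08} for the compatibility of $\theta$-invariants with the quotient, and \cite[Lemma 2.8]{por14} to recognize $\fg(F)^\theta_{x,t}$ as $\fh(F)_{x,t}$), and then observes that $\det\bigl(\Ad(\cdot)|_{\fh(F)_{x,s:s+}}\bigr)$ is an \emph{algebraic} character of the connected reductive group $\mathsf{H_x}$ over $k_F$ whose restriction to the center is trivial, hence is the trivial character. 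The point is that the determinant is literally $1$ on $\mathsf{T}(k_F)$ — a stronger statement than squareness, obtained with no root bookkeeping, no Lang--Steinberg, and no rationality analysis. The ``not a bad prime'' hypothesis enters only through the existence of the invariant bilinear form $B$ on $\fg(F)$, not through any connectedness of a norm-kernel.

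Your version has the following genuine problems. First, $W_i$ is an $\BF_p$-vector space (not a $k_F$-vector space), so $\det(\Ad(s)|_{W_i^\theta})$ is an $\BF_p$-determinant; after base changing to $\bar k_F$ and decomposing into weight spaces, you must take the norm $\N_{k_F/\BF_p}$ of the $k_F$-determinant, which your displayed formula suppresses. Second — and this is the gap you name — after writing $s=t\theta(t)$ with $t\in\mathsf{S^u}(\bar k_F)$, the factors $\alpha(t)$ live in $\bar k_F^\times$, and ``squareness'' of an element of $\BF_p^\times$ is \emph{not} witnessed by a square root in $\bar k_F^\times$: you must produce a square root lying in $\BF_p^\times$ itself. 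Even the ``manifest square'' $\prod_{\alpha\in R_i^+}\alpha(t)^2$ does not obviously have an $\BF_p$-rational square root, since $t$ is only a $\bar k_F$-point; this affects the $R_i^+$ term as much as the pair term. Third, there is no evident mechanism by which the ``not a bad prime'' hypothesis would rescue the pair contribution — the kernel of $t\mapsto t\theta(t)$ on a torus is a diagonalizable group whose component group is governed by $2$-torsion in the cocharacter lattice, not by bad primes. None of these issues is insurmountable in principle, but together they mean your argument as written does not close, whereas the paper's route avoids all of them by never leaving the finite ground field.
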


\begin{proof}
Let $\vec{G}$, $\vec{r}$ and $x$ be the above-mentioned objects determined by $(S,\mu)$.
Recall that the character $\eta_\theta$ is defined to be
$\prod\limits_{i=0}^{d-1}\chi_i^\theta$; see equation (\ref{equ. character eta_theta}). We will show that
$$\chi_i^\theta|_{(S^u)^{\theta,\circ}(F)}=1,\quad\forall\ 0\leq i\leq d-1.$$
To simplify the notation, we denote $r=r_i$, $s=\frac{r_i}{2}$,
$J=J^{i+1}$, $J_+=J^{i+1}_+$, $W=W_i$, $T=(S^u)^{\theta,\circ}$,
$G=G^{i+1}$, $G'=G^i$, $H=G^{\theta,\circ}$ and $H'=(G')^{\theta,\circ}$. Let
$\fg=\Lie(G)$, $\fg'=\Lie(G')$, $\fh=\Lie(H)$ and $\fh'=\Lie(H')$.
According to
Section \ref{subsubsec. assumption}, our assumptions on $p$ satisfy \cite[Hypothesis 2.1.1]{adl98}. Therefore there exists a $G(F)$-invariant non-degenerate symmetric
bilinear $F$-valued form $B$ on $\fg(F)$. Denote by $\fg'(F)^\bot$
the orthogonal complement of $\fg'(F)$ in $\fg(F)$ and by
$\fg'(F)^\bot_{x,t}$ the intersection $\fg'(F)^\bot\cap\fg(F)_{x,t}$
for any $t\in\tilde{\BR}$.  By \cite[Proposition 1.9.3]{adl98}, we
have $$\fg(F)_{x,t}=\fg'(F)_{x,t}\oplus\fg'(F)_{x,t}^\bot.$$ Put
$$\fJ=\fg'(F)_{x,r}\oplus\fg'(F)^\bot_{x,s},\quad
\fJ_+=\fg'(F)_{x,r}\oplus\fg'(F)^\bot_{x,s_+}.$$  There is a natural
$G'(F)_{x}$-equivariant isomorphism from $J/J_+$ to $\fJ/\fJ_+$.
Theorefore, for $k\in G'(F)^\theta_x$, we have
$$\begin{aligned}\chi^\theta(k)&=\det\left(\Ad(k)|_{W^\theta}\right)\\
&=\det\left(\Ad(k)|_{(J/J_+)^\theta}\right)\\
&=\det\left(\Ad(k)|_{(\fJ/\fJ_+)^\theta}\right).
\end{aligned}$$
Note that
$$\begin{aligned}
\fJ/\fJ_+&=\fg'(F)^\bot_{x,s}/\fg'(F)^\bot_{x,s_+}\\
&=\left(\fg'(F)_{x,s}\oplus\fg'(F)^\bot_{x,s}\right)/\left(\fg'(F)_{x,s}\oplus\fg'(F)^\bot_{x,s+}\right)\\
&=\left(\fg(F)_{x,s}/\fg(F)_{x,s+}\right)/\left((\fg'(F)_{x,s}\oplus\fg'(F)^\bot_{x,s+})/\fg(F)_{x,s+}\right)\\
&=\left(\fg(F)_{x,s}/\fg(F)_{x,s+}\right)/\left(\fg'(F)_{x,s}/\fg'(F)_{x,s+}\right)\\
&=\fg(F)_{x,s:s+}/\fg'(F)_{x,s:s+}.
\end{aligned}$$
Due to \cite[Lemma 2.11, Proposition 2.12]{hm08}, we can identify
$$\left(\fg(F)_{x,s:s+}/\fg'(F)_{x,s:s+}\right)^\theta=\fg(F)_{x,s:s+}^\theta/\fg'(F)_{x,s:s+}^\theta.$$
By \cite[Lemma 2.8]{por14}, for any $t\in\tilde{\BR}$, we have
$$\fg(F)^\theta_{x,t}=\fg(F)_{x,t}\cap\fh(F)=\fh(F)_{x,t},$$ and
$$\fg'(F)^\theta_{x,t}=\fg'(F)_{x,t}\cap\fh(F)=\fh'(F)_{x,t}.$$
Hence
$$\left(\fJ/\fJ_+\right)^\theta=\fh(F)_{x,s:s+}/\fh'(F)_{x,s:s+}.$$
Note that $T\subseteq H'\subseteq H$. Since $T$ is an unramified elliptic
torus, according to \cite[Lemma 7.1.1]{kal11} we have $T(F)=A(T)(F)\cdot T(F)_0$. Recall that $A(\cdot)$ denotes the central split torus of the corresponding reductive group. Since $S$ is an elliptic maximal torus in $G'$ and $G$, we have $A(S)=A(G')=A(G)$. On the other hand, we have $A(T)\subseteq A(S)^\theta$, $A(G')^\theta\subseteq Z(H')$ and $A(G)^\theta\subseteq Z(H)$. Hence $A(T)\subseteq Z(H)\subseteq Z(H')$. Thus, to prove the lemma, it suffices to show that
\begin{equation}\label{equ. determinant 1}
\det\left(\Ad(t)|_{\fh(F)_{x,s:s+}}\right)=\det\left(\Ad(t)|_{\fh'(F)_{x,s:s+}}\right)=1,\
\forall\ t\in T(F)_0.\end{equation} Let $\mathsf{T}$ be the special
fiber of the connected Neron model of $T$, which is a subtorus of
$\mathsf{H'}_x$ and $\mathsf{H}_x$. Then (\ref{equ. determinant 1})
is equivalent to
\begin{equation}\label{equ. determinant 2}
\det\left(\Ad(t)|_{\fh(F)_{x,s:s+}}\right)=\det\left(\Ad(t)|_{\fh'(F)_{x,s:s+}}\right)=1,\
\forall\ t\in \mathsf{T}(k_F).\end{equation} Denote
$\mathsf{V}_{x,s}=\fh(F)_{x,s:s+}$, which is viewed as a
$k_F$-affine space. The adjoint action of $\mathsf{H}_x$ on
$\mathsf{V}_{x,s}$ is an algebraic representation. Hence
$\det\left(\Ad(\cdot)|_{\mathsf{V}_{x,s}}\right)$ is an algebraic
character of $\mathsf{H}_x$. Since $\mathsf{H}_x=\mathsf{Z}(\sfH_x)^\circ\cdot(\sfH_x)_\der$  and
the restriction of this algebraic character to
$\mathsf{Z}(\mathsf{H}_x)^\circ$ and $(\sfH_x)_\der$ is trivial,
$\det\left(\Ad(\cdot)|_{\mathsf{V}_{x,s}}\right)$ itself is the
trivial character. By the same reason,
$\det\left(\Ad(\cdot)|_{\fh'(F)_{x,s:s+}}\right)$ is also trivial.
We conclude that (\ref{equ. determinant 2}) holds.
\end{proof}

\begin{remark}
As pointed out by a referee, our proof of Lemma \ref{lem. key lemma} is nearly identical to that of \cite[Lemma 7.10]{hak13}, except that the case $G=\GL_n$ is treated therein. We also remark that, according to the above proof, Lemma \ref{lem. key lemma} holds under the Hypothesis 2.1.1 of \cite{adl98} on $p$.
\end{remark}

\subsubsection{$(\theta,\varepsilon\eta)$-symmetric pair}\label{subsubsec. twisted theta
symmetric}

\begin{defn}\label{defn. twisted theta symmetric}
Let $(S,\mu)$ be a tame regular elliptic pair of $G$. We say that
$(S,\mu)$ is {\em $(\theta,\varepsilon\eta)$-symmetric} if:
\begin{itemize}
\item $S$ is $\theta$-stable,
\item $\mu|_{S^\theta(F)}=\varepsilon_S\cdot\eta_S$.
\end{itemize}
\end{defn}

\begin{lem}\label{lem. theta Howe factorization}
Let $(S,\mu)$ be a $(\theta,\varepsilon\eta)$-symmetric tame regular
elliptic pair. Then there exists a Howe factorization
$(\vec{G},\pi_{(S,\mu_\circ)},\vec{\phi})$ of $(S,\mu)$ such that
\begin{equation}\label{equ. theta Howe factorization 1}
\phi_i|_{(S^u)^{\theta,\circ}(F)}=1\end{equation} and
\begin{equation}\label{equ. theta Howe factorization 2}
\phi_i|_{G^0(F)^\theta_{x,0+}}=1\end{equation} for each $0\leq i\leq
d$.
\end{lem}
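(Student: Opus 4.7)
The plan is to start from an arbitrary Howe factorization of $(S,\mu)$—which exists by \cite[Proposition 3.7.4]{kal} under the assumption $p\nmid\abs{\pi_1(G_\der)}$—and then modify it via refactorizations so that each $\phi_i$ satisfies both triviality conditions individually. By Lemma \ref{lem. stable Levi sequence}, the twisted Levi sequence $\vec{G}$ and the point $x$ attached to $(S,\mu)$ are automatically $\theta$-stable, so the subgroups $(S^u)^{\theta,\circ}(F)$ and $G^0(F)^\theta_{x,0+}$ are well defined and contained in each $G^i(F)$, making the triviality requirements sensible at every level.

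The crucial observation is that, in aggregate, the product of the factorization characters is already trivial on the target subgroups. Indeed, since $(S^u)^{\theta,\circ}(F)\subset S^{\theta,\circ}(F)$ and $(S,\mu)$ is $(\theta,\varepsilon\eta)$-symmetric, Lemma \ref{lem. character varepsilon 2} together with Lemma \ref{lem. key lemma} yields $\mu|_{(S^u)^{\theta,\circ}(F)}=(\varepsilon_S\eta_S)|_{(S^u)^{\theta,\circ}(F)}=1$. Similarly, $\mu_\circ$ is depth-zero and hence trivial on $S(F)_{0+}$, while depth considerations on each $\phi_i$ (of depth $r_i$) control the restriction to the pro-$p$ group $G^0(F)^\theta_{x,0+}$. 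Thus the only remaining task is to redistribute this trivial total among the individual $\phi_i$'s.

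The redistribution is carried out through refactorizations, which, by \cite[Lemma 3.7.3]{kal}, preserve the property of being a Howe factorization of $(S,\mu)$. Working inductively downward from $i=d$, at each stage I multiply $\phi_i$ by a character $z_i$ of $G^i(F)$ whose depth on $G^{i-1}$ is strictly less than $r_{i-1}$, chosen precisely to cancel the restrictions of $\phi_i$ to $(S^u)^{\theta,\circ}(F)$ and to $G^0(F)^\theta_{x,0+}$; the compensating inverse is absorbed into $\phi_{i-1}$, with the leftover defect at the bottom of the chain absorbed into $\mu_\circ$. Because the aggregate product is trivial on both target subgroups, the cancellations propagate consistently through the chain and the modified $\mu_\circ$ is still trivial on $S(F)_{0+}$, hence remains depth-zero. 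Regularity of $\mu_\circ$ with respect to $G^0$ is preserved because the cumulative correction is trivial on the image of $S(F)_0$ in $\mathsf{S^u}(k_F)$.

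The main obstacle is the depth bookkeeping: the modifying characters $z_i$ must be small enough that absorbing them into $\phi_{i-1}$ does not disturb either its depth $r_{i-1}$ or its $G^i$-genericity. Fortunately, the obstruction characters we need to cancel are valued on subgroups whose Moy--Prasad depth is strictly below $r_i$, so $z_i$ can be chosen in the corresponding filtration slot; this keeps the adjusted $\phi_{i-1}$ inside its original depth range and preserves genericity, which depends only on the leading coefficient at depth $r_{i-1}$. Carefully tracking this filtration-level bookkeeping across all $i$, and confirming at the end that $\mu_\circ$ remains a regular depth-zero character of $S(F)$, is the delicate but essentially routine part of the argument.
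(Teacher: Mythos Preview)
Your overall strategy---start from an arbitrary Howe factorization and correct it via refactorizations---is different from the paper's, which instead reruns Kaletha's recursive construction (\cite[Proposition~3.7.4]{kal}) from scratch, inserting at each step a strengthened version of \cite[Lemma~3.7.6]{kal} so that every $\phi_i$ is born already trivial on $(S^u)^{\theta,\circ}(F)$ and on $G^0(F)^\theta_{x,0+}$. The paper's only real work is proving that strengthened extension lemma, which it does by passing to the abelianization $D(F)=(G/G_\der)(F)$ and using the compatibility $G^0(F)^\theta_{x,0+}\cap S(F)=S(F)^\theta_{0+}$.

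Your proposal, however, has a genuine gap precisely at the step that corresponds to this lemma. You assert that the correcting character $z_i$ of $G^i(F)$ ``can be chosen in the corresponding filtration slot'' because ``the obstruction characters we need to cancel are valued on subgroups whose Moy--Prasad depth is strictly below $r_i$''. This justification fails on two counts. First, $(S^u)^{\theta,\circ}(F)$ is not a pro-$p$ filtration subgroup: it contains elements of depth zero, and the restriction $\phi_i|_{(S^u)^{\theta,\circ}(F)}$ is not controlled by depth considerations alone---it may well have depth equal to $r_i$. Second, and more fundamentally, even if you know the values you want $z_i$ to take on the two target subgroups, you must still \emph{extend} to a character of all of $G^i(F)$ with the required depth bound; nothing in your sketch explains why such an extension exists. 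Producing it requires exactly the abelianization argument the paper supplies, together with the compatibility check on the intersection of the images of the two subgroups in $D^i(F)$. Without this, your inductive correction cannot get off the ground, and the ``routine bookkeeping'' you defer to is in fact the entire content of the lemma.
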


\begin{proof}
Denote $T=(S^u)^{\theta,\circ}$. According to Lemmas \ref{lem.
character varepsilon 2} and \ref{lem. key lemma}, we have
$\mu|_{T(F)}=1$. From the definition of $\varepsilon_S$ and
$\eta_S$, it is easy to see that $\mu|_{S(F)^\theta_{0+}}=1$. To
prove the assertion of this lemma, it suffices to plug the
conditions (\ref{equ. theta Howe factorization 1}) and (\ref{equ.
theta Howe factorization 2}) into the proof of \cite[Proposition
3.6.7]{kal}, which establishes the existence of Howe factorizations
by a recursive construction, and check that the same recursion goes
through in our situation. We just point out that the only necessary
modification that we need is a stronger statement of \cite[Lemma
3.6.9]{kal}: we further require that the character $\phi:
G(F)\ra\BC^\times$ satisfies $\phi|_{T(F)}=1$ and
$\phi|_{G^0(F)^\theta_{x,0+}}=1$ besides
$\phi|_{S(F)_r}=\mu|_{S(F)_r}$. Here we use the same notation as
that in the proof of \cite[Lemma 3.6.9]{kal}. Now let us prove this
statement. Let $M_1$ and $M_2$ be the images of $T(F)$ and
$G^0(F)^\theta_{x,0+}$ in $D(F)=(G/G_\der)(F)$ respectively. Let
$M_3$ be the subgroup of $D(F)$ generated by $M_1$ and
$D(F)_r=S(F)_r/S_\der(F)_r$, and $M_4$ the subgroup of $D(F)$
generated by $M_2$ and $M_3$. Then $\mu$ descends to a non-trivial
finite order character of $M_3$ which is trivial on $M_1$ and
$D(F)_{r+}$. Since $G^0(F)^\theta_{x,0+}\cap S(F)=S(F)^\theta_{0+}$,
this character of $M_3$ can be extended uniquely to a character
$\phi'$ of $M_4$ which is trivial on $M_2$. Then $\phi'$ can be
extended to a character $\phi$ of $D(F)$, whose pull-back to $G(F)$
satisfies our requirement.
\end{proof}

\begin{cor}\label{cor. theta symmetric}
Let $(S,\mu)$ be a $(\theta,\varepsilon\eta)$-symmetric tame regular
elliptic pair. Then any Howe factorization
$(\vec{G},\pi_{(S,\mu_\circ)},\vec{\phi})$ of $(S,\mu)$ satisfies
$$\phi|_{G^0(F)^\theta_{x,0+}}=1$$ where
$\phi=\prod\limits_{i=0}^d\phi_i$ is viewed as a character of
$G^0(F)$.
\end{cor}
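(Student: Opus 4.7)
The plan is to leverage Lemma \ref{lem. theta Howe factorization} to produce one ``nice'' Howe factorization and then propagate the property to all Howe factorizations using the fact that any two Howe factorizations of the same tame regular elliptic pair differ by a refactorization.

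First I would apply Lemma \ref{lem. theta Howe factorization} to obtain a particular Howe factorization $(\vec{G},\pi_{(S,\mu_\circ^\ast)},\vec{\phi}^{\,\ast})$ of $(S,\mu)$ with the stronger property that each $\phi_i^\ast$ is trivial on $G^0(F)^\theta_{x,0+}$. Taking the product yields $\phi^\ast|_{G^0(F)^\theta_{x,0+}} = 1$. So the corollary holds for this one distinguished Howe factorization.

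Next I would invoke \cite[Proposition 3.7.4, Lemma 3.7.3]{kal} (already cited in Section \ref{subsubsec. tame regular elliptic pairs}), which asserts that any two Howe factorizations of the same tame regular elliptic pair $(S,\mu)$ are related by a refactorization. Given an arbitrary Howe factorization $(\vec{G},\pi_{(S,\mu_\circ)},\vec{\phi})$, it is therefore refactorization-equivalent to $(\vec{G},\pi_{(S,\mu_\circ^\ast)},\vec{\phi}^{\,\ast})$. As recorded in Section \ref{subsubsec. G-equivalence} (in the paragraph just above Definition \ref{defn. theta symmetric 2}), the restriction $\phi|_{K^0_+}$, where $K^0_+ = G^0(F)_{x,0+}$, is an invariant of the refactorization class $[\Psi]$. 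Consequently $\phi|_{G^0(F)_{x,0+}} = \phi^\ast|_{G^0(F)_{x,0+}}$, and since $G^0(F)^\theta_{x,0+} \subset G^0(F)_{x,0+}$, we conclude
\[
\phi|_{G^0(F)^\theta_{x,0+}} = \phi^\ast|_{G^0(F)^\theta_{x,0+}} = 1,
\]
which is the desired equality.

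The only point that requires any care is the invariance of $\phi|_{K^0_+}$ under refactorization; this is however standard in Hakim--Murnaghan theory and is exactly the fact recalled in the paper before Definition \ref{defn. theta symmetric 2}. I do not see a serious obstacle beyond citing the correct invariance statement and the uniqueness-up-to-refactorization result for Howe factorizations; no further calculation is needed, as the per-factor vanishing $\phi_i^\ast|_{G^0(F)^\theta_{x,0+}}=1$ supplied by Lemma \ref{lem. theta Howe factorization} has already done the substantive work in the existence step.
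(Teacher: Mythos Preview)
Your argument is correct and is essentially identical to the paper's own proof: it too appeals to Lemma \ref{lem. theta Howe factorization} for one good Howe factorization and then uses that $\phi|_{G^0(F)_{x,0+}}$ is a refactorization invariant (hence the same for all Howe factorizations) to conclude. The only difference is that you spell out the intermediate restriction step explicitly, whereas the paper states this in a single sentence.
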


\begin{proof}
It is a direct consequence of Lemma \ref{lem. theta Howe
factorization} and the fact that $\phi|_{G^0(F)_{x,0+}}$ are the
same for all the Howe factorizations since they differ by
refactorizations.
\end{proof}

\begin{remark}\label{rem. twsited theta symmetric depth-zero}
For the depth-zero case, i.e., when $S$ is a maximally unramified
elliptic maximal torus of $G$ and $\mu$ a regular depth-zero
character of $S(F)$, we abbreviate the notion
$(\theta,\varepsilon\eta)$-symmetric to be
$(\theta,\varepsilon)$-symmetric since $\eta_S$ is trivial in this
situation. In this case, if $(S,\mu)$ is
$(\theta,\varepsilon)$-symmetric, according to Lemma \ref{lem.
character varepsilon 2}, $(S,\mu)$ is $\theta$-symmetric, i.e., we
have $\mu^{-1}=\mu\circ\theta$. This is because
$S(F)^{1+\theta}\subseteq S^{\theta,\circ}(F)$.
\end{remark}

\begin{remark}\label{rem. theta-symmetric characters}
For positive depth case, according to Lemma \ref{lem. character
varepsilon 2}, the condition
$\mu|_{S^\theta(F)}=\varepsilon_S\cdot\eta_S$ implies that
$$\mu|_{S^{1+\theta}(F)}=\eta_S|_{S^{1+\theta}(F)}.$$
However, we could not show $\eta_S|_{S^{1+\theta}(F)}=1$, i.e.,
$\eta_S$ is $\theta$-symmetric. To the best of the author's knowledge,
$\eta_S$ is $\theta$-symmetric for all the examples studied by Hakim
and his collaborators. We speculate that it also holds in general.
\end{remark}

\subsubsection{Statement of the main theorem}\label{subsubsec. main theorems}

Now  we can state our main theorem on the distinction problem.
\begin{thm}\label{thm. main theorem}
Let $\pi_{(S,\mu)}$ be a regular supercuspidal representation of
$G(F)$. Then $\pi_{(S,\mu)}$ is $H$-distinguished if and only if
$(S,\mu)$ is $G(F)$-conjugate to a
$(\theta,\varepsilon\eta)$-symmetric tame regular elliptic pair.
\end{thm}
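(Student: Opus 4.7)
The plan is to prove the theorem in both directions via Hakim--Murnaghan theory (Theorem \ref{thm. HM distinction}), using a Howe factorization to package the pair $(S,\mu)$ as a generic reduced $G$-datum $\Psi=(\vec{G},\pi_{(S,\mu_\circ)},\vec{\phi})$ with $\pi(\Psi)=\pi_{(S,\mu)}$, and then translating the two conditions of Theorem \ref{thm. HM distinction}---namely $[\theta]\sim[\Psi]$ and $\Hom_{K^{0,\theta}}(\rho_\nm,\eta_\theta)\neq 0$---into the symmetry condition $\mu|_{S^\theta(F)}=\varepsilon_S\cdot\eta_S$ on the tame regular elliptic pair.

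For the ``if'' direction, assuming $(S,\mu)$ is $(\theta,\varepsilon\eta)$-symmetric, Lemma \ref{lem. stable Levi sequence} gives $\theta(\vec{G})=\vec{G}$ and $\theta(x)=x$, and Lemma \ref{lem. theta Howe factorization} lets me choose a Howe factorization in which each $\phi_i$ vanishes on $(S^u)^{\theta,\circ}(F)$ and on $G^0(F)^\theta_{x,0+}$. Corollary \ref{cor. theta symmetric} then yields $\phi|_{K^{0,\theta}_+}=1$, i.e.\ $[\theta]\sim[\Psi]$. The remaining Hom condition reduces, via the factorization $\rho_\nm=\pi_{(S,\mu_\circ)}|_{K^0}\otimes\phi|_{K^0}$ and a Mackey decomposition, to distinction of the extended depth-zero representation $\tilde\kappa_{(S,\mu_\circ)}$; passing to the finite reductive quotient $\mathsf{G^0_x}(k_F)$, I would invoke Lusztig's theorem \cite{lus90} to conclude that distinction is equivalent to $\bar\mu_\circ|_{\mathsf{S^u}(k_F)^\theta}=\varepsilon_{\mathsf{S^u}}$, which combined with the chosen Howe data and Definition \ref{defn. character varepsilon} assembles into the required identity $\mu|_{S^\theta(F)}=\varepsilon_S\cdot\eta_S$.

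For the ``only if'' direction, starting from $H$-distinguished $\pi_{(S,\mu)}$, Theorem \ref{thm. HM distinction} produces a $G$-equivalent $G$-datum satisfying both HM conditions. Refactorizations and elementary transformations yield a new Howe factorization of a $G(F)$-conjugate of $(S,\mu)$ (by Lemma \ref{lem. parameterize regular repn}), and the $G$-conjugation step of $G$-equivalence also transports $(S,\mu)$ to a $G(F)$-conjugate pair. So after replacing $(S,\mu)$ by a conjugate I may assume $[\theta]\sim[\Psi]$, which forces $\theta$-stability of $\vec{G}$, of $x$, and (by Corollary \ref{cor. stable torus}) of $S(F)G^0(F)_{x,0}$ together with its torus image in $\mathsf{G^0_x}(k_F)$. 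I then use Lemma \ref{lem. stable torus} to conjugate $S$ by an element of $G^0(F)_{x,0+}$ so that $S$ itself becomes $\theta$-stable, and finally reverse the Mackey/Lusztig analysis of the previous paragraph to extract $\mu|_{S^\theta(F)}=\varepsilon_S\cdot\eta_S$, using Lemmas \ref{lem. character varepsilon 2} and \ref{lem. key lemma} to see that the prerequisite triviality on $S^{\theta,\circ}(F)$ and $(S^u)^{\theta,\circ}(F)$ is automatic.

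The main obstacle I anticipate is the analysis of $\Hom_{K^{0,\theta}}(\rho_\nm,\eta_\theta)$ in terms of $\mu|_{S^\theta(F)}$, since $\tilde\kappa_{(S,\mu_\circ)}$ is not just the inflation of a Deligne--Lusztig representation but a carefully chosen extension to $S(F)G^0(F)_{x,0}$ whose construction is genuinely subtle because $Z(F)S(F)_0\subsetneq S(F)$ in general. Correctly relating the finite-field character $\varepsilon_{\mathsf{S^u}}$ to the $p$-adic character $\varepsilon_S$ through the adjoint-quotient map $\iota:\mathsf{S}(k_F)\to\mathsf{S^u_\ad}(k_F)$ of (\ref{equ. chracter varepsilon}), and handling the double coset sum over $S(F)G^0(F)_{x,0}\backslash G^0(F)_x/K^{0,\theta}$ with the right $S(F)$-equivariance for the extension, is the technical heart of the proof---essentially the ``multiplicity'' proposition promised in the introduction.
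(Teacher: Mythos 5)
Your outline follows the paper's route quite closely: Howe factorization to package $(S,\mu)$ as a $G$-datum, Hakim--Murnaghan's Theorem~\ref{thm. HM distinction} to reduce distinction to $[\theta]\sim[\Psi]$ plus a Hom-condition at level $K^0$, the chain Lemma~\ref{lem. stable Levi sequence}, Lemma~\ref{lem. theta Howe factorization}, Corollary~\ref{cor. theta symmetric} to get the first condition, a Mackey decomposition of $\Hom_{K^{0,\theta}}(\rho,\tilde\eta_\theta)$ over $G^0_S\backslash K^0/K^{0,\theta}$, and Lemma~\ref{lem. stable torus} to lift $\theta$-stability of the finite torus to a conjugate of $S$. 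You also correctly cite Lemmas~\ref{lem. character varepsilon 2} and \ref{lem. key lemma} to handle the identity-component trivialities. That is all as in the paper.

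However, there is a genuine gap precisely where you say ``I would invoke Lusztig's theorem.'' Lusztig's result (Lemma~\ref{lem. disctinction finite field }) controls $\Hom_{\mathsf{H}(k_F)}(\kappa,\eta)$ for the Deligne--Lusztig representation $\kappa$ of the finite group $\mathsf{G^0_x}(k_F)$. What the Mackey decomposition actually requires is the nonvanishing of $\Hom_{G_S^{0,\theta}}(\tilde\kappa,\tilde\eta_\theta)$, where $\tilde\kappa$ is the extension to the noncompact group $G^0_S=S(F)G^0(F)_{x,0}$ and $\tilde\eta_\theta=\eta_\theta\cdot(\phi^{-1}|_{K^{0,\theta}})$. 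This is exactly Proposition~\ref{prop. multiplicity} in the paper, and it is not a corollary of Lusztig's finite-field theorem: the extension $\tilde\kappa$ is controlled by Kaletha's character formula \cite[Proposition 3.4.14]{kal} (equation (\ref{equ. character formula})), which involves a Jordan decomposition in the non-affine quotient $\mathsf{G}_S(k_F)=G_S/G(F)_{x,0+}$ where $\gamma_s$ lives in $\mathsf{S}(k_F)=S(F)/S(F)_{0+}$ rather than a finite torus, the passage through the adjoint map $\iota:\mathsf{S}(k_F)\to\mathsf{S^u_\ad}(k_F)$ to make sense of $\varepsilon_\mathsf{S}$, and a sum over $y\in\mathsf{G_x}(k_F)$ with $(y\cdot\theta)(\mathsf{S^u})=\mathsf{S^u}$ for which one has to invoke Lemma~\ref{lem. stable torus} again \emph{inside} the computation to define $\varepsilon_{{}^{y}\mathsf{S}}$. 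You acknowledge this as ``the technical heart,'' but what you write (reducing to $\bar\mu_\circ|_{\mathsf{S^u}(k_F)^\theta}=\varepsilon_{\mathsf{S^u}}$) concerns only $\kappa$, not $\tilde\kappa$, and so does not prove the identity on all of $S^\theta(F)$ that the theorem needs.

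A second, smaller gap is in the ``only if'' direction: the nonzero term in the Mackey decomposition occurs at some coset representative $k\in K^0$ that need not be $1$. The paper handles this by passing to the conjugated involution $\theta'=k^{-1}\cdot\theta$, verifying $[\theta']\sim[\Psi]$, running Lemma~\ref{lem. disctinction finite field }, Lemma~\ref{lem. stable torus}, and Proposition~\ref{prop. multiplicity} relative to $\theta'$, and then transporting everything back via ${}^{k'}\eta_{\theta'}=\eta_\theta$ and ${}^{k'}\varepsilon_{{}^zS}=\varepsilon_{{}^{zk'}S}$. Your proposal conflates this with the $G$-conjugation step of $G$-equivalence, but that step alone does not place the nonvanishing at the trivial coset; the change-of-involution bookkeeping is a separate and necessary argument.
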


\begin{cor}\label{cor. cor of main theorem}
Let $\pi$ be a regular depth-zero supercuspidal representation of
$G(F)$. If $\pi$ is $H$-distinguished then we have
$\pi^\vee\simeq\pi\circ\theta$.
\end{cor}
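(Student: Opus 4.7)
The plan is to use the depth-zero case of Theorem \ref{thm. main theorem} to replace $(S,\mu)$ by a $(\theta,\varepsilon)$-symmetric pair, then exploit the vanishing of $\varepsilon_S$ on $S^{\theta,\circ}(F)$ to deduce that $\mu \circ \theta = \mu^{-1}$ on all of $S(F)$, and finally match parameters via Lemma \ref{lem. parameterize regular depth-zero repn}.

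First, I would invoke Theorem \ref{thm. main theorem} (equivalently, Theorem \ref{thm. intro thm depth-zero}). Since $\pi = \pi_{(S,\mu)}$ is $H$-distinguished and has depth zero, the character $\eta_S$ is trivial, so the criterion asserts that, after replacing $(S,\mu)$ by a $G(F)$-conjugate, $S$ is $\theta$-stable and $\mu|_{S^\theta(F)} = \varepsilon_S$. Both $\pi^\vee$ and $\pi\circ\theta$ are invariant (up to isomorphism) under $G(F)$-conjugation of the parameters, so this reduction is harmless.

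Second, I would show that $\mu\circ\theta = \mu^{-1}$ on all of $S(F)$, following Remark \ref{rem. twsited theta symmetric depth-zero}. For any $s \in S(F)$, the element $s\theta(s)$ lies in the image of the norm map $S(F) \to S^\theta(F)$, hence in $S^{\theta,\circ}(F)$. By Lemma \ref{lem. character varepsilon 2}, $\varepsilon_S$ is trivial on $S^{\theta,\circ}(F)$, so
\[
\mu(s)\mu(\theta(s)) \;=\; \mu(s\theta(s)) \;=\; \varepsilon_S(s\theta(s)) \;=\; 1,
\]
which yields $\mu\circ\theta = \mu^{-1}$.

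Third, I would interpret both sides in Kaletha's parametrization. The contragredient of $\pi_{(S,\mu)}$ is $\pi_{(S,\mu^{-1})}$: this reduces, through the compact induction $\pi_{(S,\mu)} = \ind_{G(F)_x}^{G(F)}\rho_{(S,\mu)}$ and the extension step defining $\tilde{\kappa}_{(S,\mu)}$, to the well-known identity $R_{\mathsf{S^u},\bar\mu}^\vee \simeq R_{\mathsf{S^u},\bar\mu^{-1}}$ for Deligne--Lusztig representations. On the other hand, since $\theta$ is defined over $F$ and $S$ is $\theta$-stable, the same construction applied with $\theta$ transported through shows $\pi\circ\theta \simeq \pi_{(\theta(S),\mu\circ\theta)} = \pi_{(S,\mu\circ\theta)}$. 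Combining with Step 2, both $\pi^\vee$ and $\pi\circ\theta$ are the regular depth-zero supercuspidal attached to the pair $(S,\mu^{-1})$, and Lemma \ref{lem. parameterize regular depth-zero repn} gives $\pi^\vee \simeq \pi\circ\theta$.

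The main obstacle here is not deep; the argument is largely bookkeeping once the depth-zero form of Theorem \ref{thm. main theorem} is in hand. The only substantive point is the naturality of Kaletha's extension $\kappa_{(S,\mu)} \mapsto \tilde{\kappa}_{(S,\mu)}$ with respect to duality and with respect to $F$-rational automorphisms of $G$, which is a routine check built into the construction. The conceptual content of the corollary is entirely the identity $\mu \circ \theta = \mu^{-1}$, which is forced by $\varepsilon_S|_{S^{\theta,\circ}(F)} = 1$ and the inclusion $S(F)^{1+\theta} \subset S^{\theta,\circ}(F)$.
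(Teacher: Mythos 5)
Your proposal is correct and follows essentially the same route as the paper: invoke the depth-zero case of Theorem \ref{thm. main theorem} to pass to a $(\theta,\varepsilon)$-symmetric pair $(S,\mu)$, use $\varepsilon_S|_{S^{\theta,\circ}(F)}=1$ together with $S(F)^{1+\theta}\subset S^{\theta,\circ}(F)$ to get $\mu\circ\theta=\mu^{-1}$, and then compare $\pi^\vee\simeq\pi_{(S,\mu^{-1})}$ with $\pi\circ\theta\simeq\pi_{(\theta(S),\mu\circ\theta)}$ via Lemma \ref{lem. parameterize regular depth-zero repn}. You merely spell out the ``routine to check'' identifications (Deligne--Lusztig duality, naturality of Kaletha's extension) that the paper leaves implicit; the argument is otherwise identical.
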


\begin{proof}
For a regular supercuspidal representation $\pi_{(S,\mu)}$, it follows from \cite[Theorem 4.25, Corollary 4.26]{hm08} that \begin{equation}\label{equ. contragredient}\pi_{(S,\mu)}^\vee\simeq\pi_{(S,\mu^{-1})}\quad\textrm{and}\quad
\pi_{(S,\mu)}\circ\theta\simeq\pi_{(\theta(S),\mu\circ\theta)}.\end{equation} Now let $\pi$ be an $H$-distinguished regular depth-zero supercuspidal
representation. According to Theorem \ref{thm. main theorem} and
Remark \ref{rem. twsited theta symmetric depth-zero}, we can choose
a $(\theta,\varepsilon)$-symmetric maximally unramified regular
elliptic pair $(S,\mu)$ such that $\pi\simeq\pi_{(S,\mu)}$.
By Remark
\ref{rem. twsited theta symmetric depth-zero}, the condition that
$(S,\mu)$ is $\theta$-symmetric implies the corollary.
\end{proof}

\subsubsection{Some examples}\label{subsec. supplements}

\paragraph{\bf Galois involution.}
Let $H$ be a connected reductive group over $F$, $E$ a quadratic
field extension of $F$, and $G=\R_{E/F}H$ the Weil restriction of
$H$ with respect to $E/F$. The non-trivial automorphism of
$\Gal(E/F)$ gives rise to an involution $\theta$ of $G$, which is
called a {\em Galois involution}. If $E/F$ is unramified, we call $\theta$ an {\em unramified Galois involution}.

\begin{cor}\label{prop. galois involution} Let $\theta$ be a
Galois involution of $G$ and $\pi_{(\dot{S},\dot{\mu})}$
a regular supercuspidal representation of $G(F)$. Then
$\pi_{(\dot{S},\dot{\mu})}$ is $H$-distinguished if and only if
$(\dot{S},\dot{\mu})$ is $G(F)$-conjugate to a pair $({S},{\mu})$
such that ${S}$ is $\theta$-stable and
${\mu}|_{{S}^\theta(F)}=\eta_S$.
\end{cor}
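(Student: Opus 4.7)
The plan is to deduce the corollary from Theorem \ref{thm. main theorem} by proving that, under a Galois involution, the quadratic character $\varepsilon_S$ of $S^\theta(F)$ is identically trivial for every $\theta$-stable tame elliptic maximal torus $S$ of $G = \R_{E/F}H$. Once this is in place, the condition $\mu|_{S^\theta(F)} = \varepsilon_S \cdot \eta_S$ of Theorem \ref{thm. main theorem} simplifies to $\mu|_{S^\theta(F)} = \eta_S$, which is exactly the criterion of the corollary.

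First I would extract the Galois-swap structure. Over $\bar{F}$ one has $G_{\bar{F}} \cong H_{\bar{F}} \times H_{\bar{F}}$ with $\theta$ acting as the coordinate swap, so every $\theta$-stable maximal torus takes the form $S_{\bar{F}} = T \times T$ for a maximal torus $T$ of $H_{\bar{F}}$; then $S^\theta_{\bar{F}}$ is the diagonal $\Delta T$, which is connected, and $C_{G_{\bar{F}}}(S^\theta_{\bar{F}}) = T \times T = S_{\bar{F}}$. Descending to $F$ gives $S = \R_{E/F}(S^\theta)$, so $S^\theta = S^{\theta,\circ}$ is a maximal torus of $H$ and $C_G(S^\theta) = S$; the same computation in the adjoint quotient yields that $(S_\ad)^\theta$ is also connected. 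Applying the analogous analysis to the $\theta$-stable (by Lemma \ref{lem. stable Levi sequence}) twisted Levi $G^0$ in a Howe factorization of $(S,\mu)$ shows that $G^0 = \R_{E/F}((G^0)^\theta)$, so $(G^0,\theta|_{G^0})$ again carries a Galois-involution structure.

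Second I would transfer these structural facts to the finite-field reductive quotient $\mathsf{G^0_x}$ at the vertex $x$ determined by $S^u$, which is what the definition of $\varepsilon_S$ uses. In the unramified case, $\mathsf{G^0_x} \cong \R_{k_E/k_F}\mathsf{L}$ for a suitable reductive group $\mathsf{L}$ over $k_E$, the induced involution is again the Galois swap, and the first step repeats verbatim over $k_F$: one obtains $\mathsf{S^u}^\theta = \mathsf{S^u}^{\theta,\circ}$, $C_{\mathsf{G^0_x}}(\mathsf{S^u}^{\theta,\circ}) = \mathsf{S^u}$, and $(\mathsf{S^u_\ad})^\theta$ connected. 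In the ramified case $k_E = k_F$, and since $p$ is odd a uniformizer $\pi_E$ of $E$ satisfies $\theta(\pi_E) = -\pi_E$, so the Galois involution is trivial modulo $\pi_E$; hence the induced involution on $\mathsf{G^0_x}$ is trivial and the same conclusions hold immediately.

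Third, the connectedness of $(\mathsf{S^u_\ad})^\theta$ together with Lemma \ref{lem. character varepsilon} forces $\varepsilon_{\mathsf{S^u}} \equiv 1$ on all of $(\mathsf{S^u_\ad})^\theta(k_F)$; unpacking Definition \ref{defn. character varepsilon} then gives $\varepsilon_S \equiv 1$ on $S^\theta(F)$, and the corollary follows from Theorem \ref{thm. main theorem}.

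The main obstacle is the descent step: one must carefully verify the Galois-swap structure on $\mathsf{G^0_x}$ in the unramified case and the triviality of the induced involution in the ramified case, since the reductive quotient behaves differently in the two regimes. All other ingredients---Lemma \ref{lem. character varepsilon}, Definition \ref{defn. character varepsilon}, and Theorem \ref{thm. main theorem}---are already in place.
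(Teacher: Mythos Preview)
Your overall strategy is correct and matches the paper's: reduce to Theorem \ref{thm. main theorem} and show that $\varepsilon_S$ is trivial for every $\theta$-stable $S$. You also correctly extract the key structural fact, namely that the Galois-swap description forces $S^\theta = S^{\theta,\circ}$.

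However, you then take a much longer detour than necessary. Once you know $S^\theta = S^{\theta,\circ}$, the paper simply invokes Lemma \ref{lem. character varepsilon 2}, which already states $\varepsilon_S|_{S^{\theta,\circ}(F)} = 1$; since $S^\theta(F) = S^{\theta,\circ}(F)$, this immediately gives $\varepsilon_S \equiv 1$ and the proof is finished. Your second and third steps---analyzing the reductive quotient $\mathsf{G^0_x}$ separately in the unramified and ramified cases, establishing connectedness of $(\mathsf{S^u_\ad})^\theta$ over the residue field, and then appealing to Lemma \ref{lem. character varepsilon}---amount to reproving Lemma \ref{lem. character varepsilon 2} from scratch in this particular setting. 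That lemma was designed precisely to absorb this finite-field descent once and for all, so there is no need to reopen it here. Moreover, your ramified-case claim that the induced involution on $\mathsf{G^0_x}$ is literally trivial, while plausible, would require some care with integral models to justify rigorously; the paper's route avoids this entirely.

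In short: keep your first paragraph, then replace everything after ``$S^\theta = S^{\theta,\circ}$'' with a direct appeal to Lemma \ref{lem. character varepsilon 2}.
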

\begin{proof}
 According to Theorem
\ref{thm. main theorem}, it suffices to show that $\varepsilon_S$ is
trivial if $S$ is $\theta$-stable. Since $S$ is $\theta$-stable, by
Galois descent, we have $S=\R_{E/F}T$ where
$T=S^{\theta}=S^{\theta,\circ}$ is a torus of $H$. Therefore, by
Lemma \ref{lem. character varepsilon 2}, $\varepsilon_S$ is trivial.
\end{proof}

\begin{remark}\label{rem. galois involution}
For Galois involutions, Prasad \cite{pra} stated a precise conjecture
to give sufficient and necessary conditions for representations to
be distinguished in terms of the Langlands parameters. Our prior work
\cite{zha} verified a necessary condition of this conjecture for
unramified Galois involutions, when the representations are regular
depth-zero supercuspidal representations of unramified groups. The
above corollary is a generalization of \cite[Proposition 3.2]{zha}.
Moreover, in a sequal \cite{zhab} to this paper, we prove that $\eta_S$ is indeed trivial for 
unramified Galois involutions.
\end{remark}

\paragraph{\bf Epipelagic supercuspidal
representations.} The notion of epipelagic supercuspidal representations was first
introduced by Reeder and Yu \cite{ry14}. Kaletha \cite{kal15} later
studied the properties of epipelagic $L$-packets, including the
endoscopic character identities. This kind of representations is a
special case of a more general class of supercuspidal
representations, called toral supercuspidal representations which
were first considered by Adler \cite{adl98}. We refer to \cite[\S2.5]{ry14}
for the definition of epipelagic supercuspidal representations, and to \cite[\S6]{kal}
for a brief discussion on toral supercuspidal representations. In
terms of Yu's data, epipelagic supercuspidal representations are
constructed from generic cuspidal {\em epipelagic} $G$-data
$$\left((G^0=S,G^1=G),x,\rho=1,(\phi_0=\mu,\phi_1=1)\right),$$ where
$x\in\CB^\red(G,F)$ is a rational point of order $e$ (cf.
\cite[\S3.3]{ry14}) and $(S,\mu)$  a tame regular elliptic pair
satisfying \cite[Conditions 3.3]{kal15}. The resulting
representations $\pi_{(S,\mu)}$ are called {\em epipelagic}. An
important property of epipelagic $G$-data is that
$$G(F)_{x,\frac{1}{2e}+}=G(F)_{x,\frac{1}{2e}}=G(F)_{x,\frac{1}{e}}.$$
Therefore $J^1/J^1_+$ is automatically trivial, and thus
$\eta_\theta=1$ for any epipelagic $G$-datum such that
$[\theta]\sim[\Psi]$. Then the following corollary is a direct
consequence of Theorem \ref{thm. HM distinction} or
\cite[Proposition 5.31]{hm08}.

\begin{cor}\label{cor. epipelagic}
Let $\pi_{(\dot{S},\dot{\mu})}$ be an epipelagic supercuspidal
representation. Then $\pi_{(\dot{S},\dot{\mu})}$ is
$H$-distinguished if and only if $(\dot{S},\dot{\mu})$ is
$G(F)$-conjugate to a pair $({S},{\mu})$ such that ${S}$ is
$\theta$-stable and ${\mu}|_{{S}^\theta(F)}=1$. In particular, if
$\pi_{(S,\mu)}$ is $H$-distinguished then we have
$\pi_{(S,\mu)}^\vee\simeq\pi_{(S,\mu)}\circ\theta$.

\end{cor}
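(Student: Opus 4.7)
The plan is to deduce the corollary from the main theorem (Theorem \ref{thm. main theorem}) by verifying that, for an epipelagic tame regular elliptic pair $(S,\mu)$ with $S$ $\theta$-stable, both characters $\varepsilon_S$ and $\eta_S$ vanish identically. The vanishing of $\eta_S$ is essentially already observed in the paragraph preceding the statement: for an epipelagic datum the equality $G(F)_{x,\frac{1}{2e}+} = G(F)_{x,\frac{1}{2e}}$ forces $J^1/J^1_+ = 0$, so the definition (\ref{equ. character eta_theta}) gives $\eta_\theta = 1$ on $K^{0,\theta}$ whenever $[\theta]\sim[\Psi]$, and therefore $\eta_S = \eta_\theta|_{S^\theta(F)} = 1$.

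For the triviality of $\varepsilon_S$, I would invoke the convention recorded after Definition \ref{defn. character varepsilon}, namely that $\varepsilon_S$ is computed with respect to the zeroth twisted Levi $G^0$ of the Howe factorization of $(S,\mu)$. In the epipelagic case $G^0 = S$ itself, so the ambient connected reductive group entering the definition of $\varepsilon_{\mathsf{S^u}}$ is the torus $\mathsf{S^u}$. Consequently $C_{\mathsf{S^u}}((\mathsf{S^u})^{\theta,\circ}) = \mathsf{S^u}$ and $C_{\mathsf{S^u}}(t) = \mathsf{S^u}$ for every $t$, so the defining formula collapses to $\sigma(\mathsf{S^u})\cdot\sigma(\mathsf{S^u}) = 1$, giving $\varepsilon_S \equiv 1$.

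With $\varepsilon_S = \eta_S = 1$, the $(\theta,\varepsilon\eta)$-symmetry condition of Theorem \ref{thm. main theorem} reduces to $\mu|_{S^\theta(F)} = 1$, which is precisely the stated criterion. For the \emph{in particular} clause, since $S$ is commutative we have $s\theta(s) \in S^\theta(F)$ for every $s \in S(F)$, so $\mu|_{S^\theta(F)} = 1$ forces $\mu \circ \theta = \mu^{-1}$; combined with the routine identifications $\pi_{(S,\mu)}^\vee \simeq \pi_{(S,\mu^{-1})}$ and $\pi_{(S,\mu)}\circ\theta \simeq \pi_{(\theta(S),\mu\circ\theta)} = \pi_{(S,\mu\circ\theta)}$ already used in the proof of Corollary \ref{cor. cor of main theorem}, this yields $\pi_{(S,\mu)}^\vee \simeq \pi_{(S,\mu)}\circ\theta$.

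I do not anticipate any genuine obstacle; the only point requiring care is the convention placing $\varepsilon_S$ inside the (here abelian) $G^0$ rather than the ambient $G$, after which the vanishing is immediate. As an alternative one can argue directly from Theorem \ref{thm. HM distinction}: for an epipelagic datum $\rho = 1$ and $\eta_\theta = 1$, so distinction reduces to $\phi|_{K^{0,\theta}} = 1$, and restricting to $S(F)^\theta$ recovers the same criterion. This is the route suggested by the parenthetical reference to \cite{hm08} in the motivating remark.
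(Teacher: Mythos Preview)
Your argument is correct. The paper, however, takes the route you mention only as an alternative: it deduces the corollary directly from Theorem \ref{thm. HM distinction} (Hakim--Murnaghan), bypassing the main theorem entirely. In the epipelagic datum one has $G^0=S$, hence $K^0=S(F)$, $\rho=\mathbf{1}$, $\phi=\mu$, and the observation $J^1/J^1_+=0$ gives $\eta_\theta=1$; so condition~2 of Theorem \ref{thm. HM distinction} reads $\Hom_{S(F)^\theta}(\mu,\mathbf{1})\neq 0$, i.e.\ $\mu|_{S^\theta(F)}=1$, while condition~1 amounts to $\theta$-stability of $S$ after conjugation.

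Your primary route via Theorem \ref{thm. main theorem} is equally valid and has the merit of illustrating concretely how the characters $\varepsilon_S$ and $\eta_S$ degenerate in the toral case $G^0=S$; your computation that $\varepsilon_{\mathsf{S^u}}$ collapses because the ambient group in Definition \ref{defn. character varepsilon} is the torus $\mathsf{S^u}$ itself is exactly right. The paper's direct route is shorter because it avoids invoking the machinery of Section \ref{subsec. proofs}, but of course it is logically the same content, since the main theorem is itself built on Theorem \ref{thm. HM distinction}. Your treatment of the ``in particular'' clause matches the argument in the proof of Corollary \ref{cor. cor of main theorem}.
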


\subsection{Proof of the main theorem}\label{subsec. proofs}

\subsubsection{Distinction over field fields}
Let $\mathsf{G}$ be a connected reductive group over $k_F$ and
$\mathsf{T}$ a maximal $k_F$-torus of $\mathsf{G}$. Let $\lambda$ be
a character of $\mathsf{T}(k_F)$ in general position and
$\kappa_{(\mathsf{T},\lambda)}=\pm R_{\mathsf{T},\lambda}$ the
Deligne--Lusztig representation of $\mathsf{G}(k_F)$. Let $\theta$
be an involution of $\mathsf{G}$ defined over $k_F$,
$\mathsf{H}=\mathsf{G}^\theta$, and $\eta$ a character of
$\mathsf{H}(k_F)$. Denote
$$\mathsf{m}=\dim\Hom_{\mathsf{H}(k_F)}\left(\kappa_{(\mathsf{T},\lambda)},\eta\right).$$
We call $(\mathsf{T},\lambda)$ a
{\em$(\theta,\varepsilon\eta)$-symmetric pair} if $\mathsf{T}$ is
$\theta$-stable and
$$\lambda|_{\mathsf{T}^\theta(k_F)}
=\varepsilon_\mathsf{T}\cdot\eta|_{\mathsf{T}^\theta(k_F)}.$$ Recall that the definition of $\varepsilon_{\mathsf{T}}$ is given at the beginning of \S\ref{subsubsec. character varepsilon}. The
following lemma is a partial summary of prior works \cite{lus90},
\cite[\S3.2]{hj12} and \cite[\S8.2]{hak13}.

\begin{lem}\label{lem. disctinction finite field }
If the multiplicity $\mathsf{m}$ is non-zero, then
$(\mathsf{T},\lambda)$ is $\mathsf{G}(k_F)$-conjugate to a
$(\theta,\varepsilon\eta)$-symmetric pair. If we further assume that
$\eta|_{\mathsf{T}_1^{\theta,\circ}(k_F)}=1$ for any $\theta$-stable
torus $\mathsf{T}_1$ that is $\mathsf{G}(k_F)$-conjugate to
$\mathsf{T}$, the converse also holds.
\end{lem}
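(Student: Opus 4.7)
The plan is to invoke Lusztig's multiplicity formula for Deligne--Lusztig representations on symmetric spaces \cite{lus90}, together with its extension by Hakim--Lansky \cite{hj12} and Hakim \cite{hak13} permitting a nontrivial target character $\eta$ on $\mathsf{H}(k_F)$. The backbone of the argument is the identity
$$\mathsf{m} \;=\; \sum_{[\mathsf{T}_1]}\; \bigl\langle\, \lambda^{\mathsf{T}_1}|_{\mathsf{T}_1^\theta(k_F)},\ \varepsilon_{\mathsf{T}_1}\cdot \eta|_{\mathsf{T}_1^\theta(k_F)} \,\bigr\rangle_{\mathsf{T}_1^\theta(k_F)},$$
where $\mathsf{T}_1$ runs over $\mathsf{H}(k_F)$-orbits of $\theta$-stable maximal $k_F$-tori of $\mathsf{G}$ that are $\mathsf{G}(k_F)$-conjugate to $\mathsf{T}$, and $\lambda^{\mathsf{T}_1}$ denotes the transport of $\lambda$ to $\mathsf{T}_1$ along any conjugating element. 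I would derive this (or read it off from the cited papers) by combining Frobenius reciprocity with the Deligne--Lusztig character formula and Lusztig's analysis of $\theta$-fixed points of the associated variety; the signs $\pm$ appearing in $\kappa_{(\mathsf{T},\lambda)}$ and in $\varepsilon_{\mathsf{T}_1} = \sigma(C_{\mathsf{G}}(\mathsf{T}_1^{\theta,\circ}))\cdot\sigma(C_{\mathsf{G}}(\mathsf{T}_1^{\theta,\circ})\cap C_\mathsf{G}(t))$ combine to make each summand a non-negative integer.

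Granted the formula, the forward direction is immediate. If $\mathsf{m}\neq 0$, at least one orbit contributes nontrivially, so there exists a $\theta$-stable $\mathsf{T}_1$ conjugate to $\mathsf{T}$ for which $\lambda^{\mathsf{T}_1}|_{\mathsf{T}_1^\theta(k_F)}=\varepsilon_{\mathsf{T}_1}\cdot\eta|_{\mathsf{T}_1^\theta(k_F)}$, i.e. a $(\theta,\varepsilon\eta)$-symmetric representative of the $\mathsf{G}(k_F)$-conjugacy class of $(\mathsf{T},\lambda)$.

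For the converse, the danger is that the summand coming from the given symmetric pair could in principle be cancelled by others. The extra hypothesis $\eta|_{\mathsf{T}_1^{\theta,\circ}(k_F)}=1$ for every $\theta$-stable $\mathsf{T}_1$ that is $\mathsf{G}(k_F)$-conjugate to $\mathsf{T}$ is precisely what rules this out: combined with Lemma \ref{lem. character varepsilon}, it implies that $\varepsilon_{\mathsf{T}_1}\cdot\eta|_{\mathsf{T}_1^\theta(k_F)}$ factors through the finite component group $\mathsf{T}_1^\theta(k_F)/\mathsf{T}_1^{\theta,\circ}(k_F)$, and the standard orthogonality count then shows that each summand in the multiplicity formula is a non-negative integer that is strictly positive whenever the symmetric condition holds.

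The main obstacle is the careful bookkeeping of signs in the multiplicity formula and the reconciliation of Lusztig's original statement (for $\eta = 1$) with the $\eta$-twisted form used here. Once the content of \cite[\S2]{lus90}, \cite[\S3.2]{hj12}, and \cite[\S8.2]{hak13} is assembled into the displayed formula above, both implications follow formally; the hypothesis in the converse is exactly the input that converts a sign-indefinite character sum into a positive count.
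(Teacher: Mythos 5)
Your overall plan---invoke the multiplicity formula of Lusztig/Hakim--Lansky/Hakim, read off the forward direction from a nonzero term, and use the extra hypothesis to control cancellation in the converse---is the same strategy the paper uses (Remark \ref{rem. finite field} defers the proof exactly to this formula). But there are two concrete problems in your handling of the signs.

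First, your displayed formula is not the one that actually comes out of \cite{lus90}, \cite{hj12}, \cite{hak13}. The formula has the shape
\[
\mathsf{m}=\sum_{[\mathsf{T}_1]}\ \sigma(\mathsf{G})\,\sigma\!\bigl(C_\mathsf{G}(\mathsf{T}_1^{\theta,\circ})\bigr)\ \bigl\langle \lambda^{\mathsf{T}_1}|_{\mathsf{T}_1^\theta(k_F)},\ \varepsilon_{\mathsf{T}_1}\cdot\eta|_{\mathsf{T}_1^\theta(k_F)}\bigr\rangle_{\mathsf{T}_1^\theta(k_F)},
\]
with a sign $\sigma(\mathsf{G})\,\sigma(C_\mathsf{G}(\mathsf{T}_1^{\theta,\circ}))\in\{\pm1\}$ in front of each inner product, and this sign is \emph{not} automatically $+1$. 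You write that the signs in $\kappa_{(\mathsf{T},\lambda)}=\pm R_{\mathsf{T},\lambda}$ and in $\varepsilon_{\mathsf{T}_1}$ ``combine to make each summand a non-negative integer,'' but if that were true there would be no cancellation to worry about and the converse would be automatic---your own discussion of cancellation two paragraphs later contradicts this. The character $\varepsilon_{\mathsf{T}_1}$ is genuinely a $\{\pm1\}$-valued character entering the inner product, not a global normalization; the $\sigma$-factor in front stays.

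Second, and more substantively, your explanation of why the extra hypothesis rescues the converse---that $\varepsilon_{\mathsf{T}_1}\cdot\eta$ descends to the component group and a ``standard orthogonality count'' then forces positivity---is not the mechanism, and I don't see how it would close the gap. What actually happens (and this is what the paper reproduces, in a more complicated setting, in the proof of Proposition \ref{prop. multiplicity}) is: if the $\mathsf{T}_1$-term is nonzero, i.e.\ $\lambda^{\mathsf{T}_1}|_{\mathsf{T}_1^\theta(k_F)}=\varepsilon_{\mathsf{T}_1}\cdot\eta$, then restricting to $\mathsf{T}_1^{\theta,\circ}(k_F)$ and using $\varepsilon_{\mathsf{T}_1}|_{\mathsf{T}_1^{\theta,\circ}(k_F)}=1$ (equation (\ref{equ. character varepsilon relation})) together with the additional hypothesis $\eta|_{\mathsf{T}_1^{\theta,\circ}(k_F)}=1$ forces $\lambda^{\mathsf{T}_1}|_{\mathsf{T}_1^{\theta,\circ}(k_F)}=1$. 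Since $\lambda$ is non-singular, \cite[Lemmas 10.4, 10.5]{lus90} (extended in \cite[Lemma 8.1]{hak13}) then give $\sigma(C_\mathsf{G}(\mathsf{T}_1^{\theta,\circ}))=\sigma(\mathsf{G})$, so the problematic sign is $+1$ on every term that contributes. That Lusztig input is the real heart of the converse; without naming it the argument has a genuine gap.
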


\begin{remark}\label{rem. finite field}
When $\lambda$ is an arbitrary character and $\eta=1$, Lusztig
\cite[Theorem 3.3]{lus90} established an explicit formula for $\mathsf{m}$. Hakim
and Lansky \cite[Theorem 3.11]{hj12} generalized Lusztig's formula to
arbitrary $\eta$. When $\lambda$ is in general position, the multiplicity
formula for $\mathsf{m}$ becomes much more simple, as discussed in
\cite[\S10]{lus90} and \cite[\S8.2]{hak13}. The above lemma can be
deduced directly from the multiplicity formula for $\mathsf{m}$.
\end{remark}

\subsubsection{Distinction of $\tilde{\kappa}_{(S,\mu)}$}\label{subsubsec.
multiplicity}
Let $G$ be a tamely ramified connected reductive group over $F$ and
$\theta$ an involution of $G$. Let $S$ be a maximally unramified
elliptic maximal torus of $G$ and $S^u$ the maximal unramified
subtorus of $S$. Let $x$ be the vertex of $\CB^\red(G,F)$ attached
to $S$.  Let $\mu$ be a regular depth-zero character of $S(F)$ and
$\tilde{\kappa}_{(S,\mu)}$ the representation of
$G_S=S(F)G(F)_{x,0}$ introduced in Section \ref{subsubsec. tame
regular pairs depth-zero case}. Suppose that $\theta(x)=x$ and $G_S$
is $\theta$-stable. Then $G(F)_{x,0}$ and $G(F)_{x,0+}$ are both
$\theta$-stable. Let $\eta$ be a character of $G_S^\theta$ which is
trivial on $Z^\theta(F)$ and $G(F)^\theta_{x,0+}$. One of the key steps to prove
Theorem \ref{thm. main theorem} is to determine
when the multiplicity
$$\fm:=\dim\Hom_{G_S^\theta}\left(\tilde{\kappa}_{(S,\mu)},\eta\right)$$
is non-zero. In this subsection, we say that $(S,\mu)$ is
$(\theta,\varepsilon\eta)$-symmetric if $S$ is $\theta$-stable and
$$\mu|_{S^\theta(F)}=\varepsilon_{S}\cdot\eta|_{S^\theta(F)}.$$

\begin{prop}\label{prop. multiplicity}
If the multiplicity $\fm$ is non-zero, then $(S,\mu)$ is
$G(F)_{x,0}$-conjugate to a $(\theta,\varepsilon\eta)$-symmetric
pair. If we further assume that
$\eta|_{(S^u_1)^{\theta,\circ}(F)_0}=1$ for any $\theta$-stable
torus $S_1$ that is $G(F)_{x,0}$-conjugate to $S$, the converse also
holds.

\end{prop}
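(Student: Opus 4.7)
The plan is to reduce the distinction problem for $\tilde{\kappa}_{(S,\mu)}$ on $G_S$ to the finite-field distinction problem on $\mathsf{G_x}(k_F)$, apply Lemma \ref{lem. disctinction finite field } to handle the latter, then use Lemma \ref{lem. stable torus} to lift a $\theta$-stable torus from the finite-field side to the $p$-adic side, and finally exploit the explicit construction of the extension $\tilde{\kappa}_{(S,\mu)}$ in \cite[\S3.4.4]{kal} to match characters on $S^\theta(F)$.

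For the necessity direction, suppose $\fm\neq0$. Restricting a nonzero element of $\Hom_{G_S^\theta}(\tilde{\kappa}_{(S,\mu)},\eta)$ to $G(F)_{x,0}^\theta$ gives a nonzero element of $\Hom_{G(F)_{x,0}^\theta}(\kappa_{(S,\mu)},\eta|_{G(F)_{x,0}^\theta})$. Since $\kappa_{(S,\mu)}$ is inflated from $\pm R_{\mathsf{S^u},\bar{\mu}}$ on $\mathsf{G_x}(k_F)$ and $\eta$ is trivial on $G(F)_{x,0+}^\theta$, this descends to the finite-field side. Lemma \ref{lem. disctinction finite field } then gives that $(\mathsf{S^u},\bar{\mu})$ is $\mathsf{G_x}(k_F)$-conjugate to a $(\theta,\varepsilon\bar{\eta})$-symmetric pair. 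Lifting via Lemma \ref{lem. stable torus}, we may assume after a $G(F)_{x,0}$-conjugation that $S$ itself is $\theta$-stable and $\bar{\mu}|_{\mathsf{S}(k_F)^\theta}=\varepsilon_{\mathsf{S}}\cdot\bar{\eta}$, which by Definition \ref{defn. character varepsilon} means $\mu|_{S(F)_0\cap S^\theta(F)}=\varepsilon_S\cdot\eta|_{\cdots}$. The final task is to promote this equality from $S(F)_0\cap S^\theta(F)$ to all of $S^\theta(F)$: this is where the subtleness of the construction of $\tilde{\kappa}_{(S,\mu)}$ enters, because $S(F)/Z(F)S(F)_0$ is generally nontrivial. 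One chooses a vector in the $\bar{\mu}$-isotypic space of $\kappa_{(S,\mu)}$ on which $S(F)$ acts through the character by which $\tilde{\kappa}_{(S,\mu)}$ is defined to extend, and evaluates the distinguishing functional on it; the action of any $s\in S^\theta(F)$ must produce the scalar $\eta(s)$, forcing $\mu(s)=\varepsilon_S(s)\eta(s)$.

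For the sufficiency direction, assume after $G(F)_{x,0}$-conjugation that $(S,\mu)$ is $(\theta,\varepsilon\eta)$-symmetric. The converse part of Lemma \ref{lem. disctinction finite field }, whose hypothesis that $\eta|_{(\mathsf{S_1^u})^{\theta,\circ}(k_F)}=1$ on all finite-field conjugates is granted by the assumption $\eta|_{(S_1^u)^{\theta,\circ}(F)_0}=1$, produces a nonzero $\ell_0\in\Hom_{\mathsf{H_x}(k_F)}(\pm R_{\mathsf{S^u},\bar{\mu}},\bar{\eta})$, hence a nonzero element of $\Hom_{G(F)_{x,0}^\theta}(\kappa_{(S,\mu)},\eta)$. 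Since $G_S^\theta=S^\theta(F)\cdot G(F)_{x,0}^\theta$ (using Lemma \ref{lem. stable torus} to normalize $S$), extending $\ell_0$ to a $G_S^\theta$-equivariant functional reduces to checking compatibility with the action of $S^\theta(F)$ on $\tilde{\kappa}_{(S,\mu)}$. By the construction in \cite[\S3.4.4]{kal}, this action differs from $\mu$ by a factor captured by $\varepsilon_S$, and the hypothesis $\mu|_{S^\theta(F)}=\varepsilon_S\cdot\eta$ is exactly what makes the extension succeed.

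The main obstacle is the passage between the finite-field and $p$-adic levels in the torus direction. Because $\tilde{\kappa}_{(S,\mu)}$ is defined by extending an irreducible representation of the parahoric through $S(F)$ via a somewhat indirect procedure (reflecting that $Z(F)S(F)_0$ can be a proper subgroup of $S(F)$), it is nontrivial to trace the precise character by which $S^\theta(F)$ acts on any chosen distinguishing line, and to verify that the twisting character matches $\varepsilon_S$ as defined in \eqref{equ. chracter varepsilon}. Carrying this out rigorously requires a careful unwinding of Kaletha's construction of $\tilde{\kappa}_{(S,\mu)}$ combined with Lemma \ref{lem. character varepsilon 2} to ensure consistency on $S^{\theta,\circ}(F)$, and is the technical heart of the proof.
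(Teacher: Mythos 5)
Your proposal replaces the paper's argument with a quite different strategy: restrict the distinguishing functional to the parahoric, invoke Lemma~\ref{lem. disctinction finite field } on the finite-field quotient, lift the $\theta$-stable torus via Lemma~\ref{lem. stable torus}, and then ``promote'' the resulting character identity from $S^\theta(F)\cap S(F)_0$ to all of $S^\theta(F)$ by unwinding the construction of $\tilde\kappa_{(S,\mu)}$. The paper does not do a restriction-and-extension argument at all: it computes $\fm$ directly via the finite-group orthogonality formula
$\fm=\frac{1}{\abs{\mathsf M}}\sum_{\gamma\in\mathsf M}\Theta(\gamma)\eta^{-1}(\gamma)$
on $\mathsf M = G_S^\theta/(Z(F)^\theta G(F)_{x,0+}^\theta)$, feeds in Kaletha's character formula (\ref{equ. character formula}) for $\Theta$, uses the $\theta$-equivariant Jordan decomposition, Lusztig's Theorem~3.4 on Green-function sums, and \cite[Lemma 10.4, 10.5]{lus90}, \cite[Lemma 8.1]{hak13} for the sign bookkeeping. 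The factor $\varepsilon_S$ then appears automatically from the sign $\sigma(C_{\mathsf G}(\cdots))$ in the Green-function sum, and, crucially, the summation variable $\gamma_s$ ranges over all of $\mathsf M$ -- so the parahoric part and the $S^\theta(F)$ part are treated simultaneously, never separated.

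This is exactly where your approach has a genuine gap. After descending to the finite-field level you only obtain information about $\mu$ on $S^\theta(F)\cap S(F)_0$, because $\kappa_{(S,\mu)}$ only sees the torus modulo $S(F)_{0+}$; the full group $S^\theta(F)$ carries strictly more data (reflected by the fact that $Z(F)S(F)_0\subsetneq S(F)$ in general). You propose to recover this by ``choosing a vector in the $\bar\mu$-isotypic space ... on which $S(F)$ acts through the character by which $\tilde\kappa_{(S,\mu)}$ is defined to extend.'' But the Deligne--Lusztig space is not one-dimensional, the $\bar\mu$-isotypic subspace is not canonically a line, the distinguishing functional has no reason to be supported on a single such vector, and most importantly $S(F)$ does not act on $\tilde\kappa_{(S,\mu)}$ ``by $\mu$ twisted by $\varepsilon_S$'' -- the character $\varepsilon_S$ plays no role whatsoever in Kaletha's construction of $\tilde\kappa_{(S,\mu)}$ in \cite[\S3.4.4]{kal}. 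It emerges only in the multiplicity computation, through the sign $\sigma$ in Lusztig's formula. You acknowledge in your closing paragraph that tracing the character by which $S^\theta(F)$ acts on a distinguishing line ``is the technical heart of the proof'' -- but that is precisely the part you leave undone, and it is not a technicality that can be filled in by a routine unwinding of the definitions. The paper sidesteps the problem entirely by never trying to ``extend'' a finite-field functional; it computes the global multiplicity in one pass. If you want to salvage the restriction approach you would need, at minimum, a precise description of how $\tilde\kappa_{(S,\mu)}(s)$ acts on the image of the embedding $\Hom_{\mathsf{G_x}(k_F)^\theta}(\kappa,\bar\eta)\hookrightarrow\Hom_{\BC}(V,\BC)$ for $s\in S^\theta(F)$, together with an argument that this action is scalar and that the scalar is $\mu(s)\varepsilon_S(s)^{-1}$; none of that is in your sketch.

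A secondary, smaller issue: in the sufficiency direction you assert that $G_S^\theta=S^\theta(F)\cdot G(F)_{x,0}^\theta$. This factorization is not automatic (the $\theta$-fixed points of a product need not be the product of the $\theta$-fixed points), and you would need to justify it or find a way around it; the paper's proof does not need such a factorization.
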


\begin{proof} For simplicity we will denote
$\tilde{\kappa}_{(S,\mu)}$ by $\tilde{\kappa}$ when there is no
confusion. First note that $Z(F)$ acts on the representation space
$V$ of $\tilde{\kappa}$ by the restriction of $\mu$ to $Z(F)$. Hence
a necessary condition for the non-vanishing of $\fm$ is
\begin{equation}\label{equ. central character}\mu|_{Z^\theta(F)}=1.\end{equation}
From now on we assume (\ref{equ. central character}). Denote
$$\mathsf{M}=G_S^\theta/\left(Z(F)^\theta
G(F)^\theta_{x,0+}\right).$$ Since $G(F)_{x,0+}$ acts trivially on
$V$, we have
$$\fm=\dim\Hom_\mathsf{M}(\tilde{\kappa},\eta).$$
We claim that $\mathsf{M}$ is a finite group. Note that
$$\begin{aligned}G_S^\theta\cap
\left(Z(F)G(F)_{x,0+}\right)=&\left(Z(F)G(F)_{x,0+}\right)^\theta\\
=&Z(F)^\theta G(F)^\theta_{x,0+},\end{aligned}$$ where the last
equality is due to \cite[Lemma 2.11, Proposition 2.12]{hm08}.
Therefore $\mathsf{M}$ is a subgroup of
$G_S/\left(Z(F)G(F)_{x,0+}\right)$ and the latter group is obviously
a finite group since $G_S/Z(F)$ is compact.

Since the group $\mathsf{M}$ is finite and the space $V$ is finite
dimensional, we have
$$\fm=\dim V^\mathsf{(M,\eta)}
=\frac{1}{\abs{\mathsf{M}}}\sum_{\gamma\in\mathsf{M}}\Theta(\gamma)\eta^{-1}(\gamma),$$
where $V^{(\mathsf{M},\eta)}$ is the isotypical subspace of $V$ on
which $\mathsf{M}$ acts by $\eta$, and $\Theta$ is the character of
the representation $\tilde{\kappa}$.

Now we review the character formula of $\Theta$ \cite[Propositions
3.4.23 and 3.4.24]{kal}. The notation below is the same as that in Section
\ref{subsubsec. character varepsilon}. We view $\tilde{\kappa}$ as a
representation of $\mathsf{G}_S(k_F)=G_S/G(F)_{x,0+}$. For
$\gamma=rg\in\mathsf{G}_S(k_F)$ with
$r\in\mathsf{S}(k_F)=S(F)/S(F)_{0,+}$ and
$g\in\mathsf{G}_x(k_F)=G(F)_{x,0}/G(F)_{x,0+}$, there exists a
Jordan decomposition $\gamma=\gamma_s\gamma_u$ given as follows; see the paragraph below {\em loc. cit.} Proposition 3.4.23 and the first two paragraphs of the proof of {\em loc. cit.} Propositions 3.4.23 and 3.4.24. Let
$\bar{r}$ be the image of $r$ in $\mathsf{S^u_\ad}(k_F)$ and
$\dot{r}$ any lift of $\bar{r}$ in $\mathsf{S^u}(\bar{k}_F)$. Let
$\dot{r}g=su$ be the Jordan decomposition of $\dot{r}g$ in
$\mathsf{G}_x(\bar{k}_F)$. In fact we have
$\dot{r}^{-1}s\in\sfG_x(k_F)$ and
$u\in\mathsf{G}_x(k_F)_\unip$ where $\mathsf{G}_x(k_F)_\unip$
denotes the set of unipotent elements of $\mathsf{G}_x(k_F)$. Set
$\gamma_s=r\dot{r}^{-1}s\in\sfG_S(k_F)$ and $\gamma_u=u\in\sfG_x(k_F)$. This
decomposition is independent of the choice of $\dot{r}$ and thus is
unique. Moreover $r\dot{r}^{-1}$ commutes with any element of
$\mathsf{G}_x(\bar{k}_F)$ and
$C_{\mathsf{G}_x}(\gamma_s)=C_{\mathsf{G}_x}(s)$ is defined over
$k_F$. Then the character formula is
\begin{equation}\label{equ. character formula}
\Theta(\gamma)={\sigma(\mathsf{G}_x)\sigma(\mathsf{S^u})}
\frac{1}{\abs{C_{\mathsf{G}_x}(\gamma_s)(k_F)}}
\sum_{\mbox{\tiny$\begin{array}{c}y\in\mathsf{G}_x(k_F)\\
y^{-1}\gamma_s y\in\mathsf{{S}}(k_F)\end{array}$}}
\mu(y^{-1}\gamma_s
y)Q^{C_{\mathsf{G}_x}(\gamma_s)}_{y\mathsf{S^u}y^{-1},1}(\gamma_u),
\end{equation}
where
$Q^{C_{\mathsf{G}_x}(\gamma_s)}_{y\mathsf{S^u}y^{-1},1}(\gamma_u)$ is
the Green function. 

We remark that the character formulas in {\em loc. cit.} Propositions 3.4.23 and 3.4.24 are valid for elements in $G_S$, and the formula (\ref{equ. character formula}) is valid for elements in $\sfG_S(k_F)$. The proof of (\ref{equ. character formula}) can be read off from that of {\em loc. cit.} Propositions 3.4.23 and 3.4.24.

From now on, for convenience, we denote
$\mathsf{G}=\mathsf{G}_x$.
Passage from $\mathsf{G}_S(k_F)$ to $\mathsf{M}$, for
$\gamma\in\mathsf{M}$ we have Jordan decomposition
$\gamma=\gamma_s\gamma_u$ with
$\gamma_s\in\mathsf{G}_S(k_F)/Z(F)^\theta$ and
$\gamma_u\in\mathsf{G}(k_F)_\unip$. Since $\theta(\gamma)=\gamma$,
by the uniqueness of Jordan decomposition, it has to be
$\gamma_s\in\mathsf{M}$ and
$\gamma_u\in\mathsf{G}(k_F)^\theta_\unip$. Put
$$\bar{\mathsf{S}}(k_F)=S(F)/Z(F)S(F)_{0+}.$$ We denote by
$\mathsf{M}_\ss$ the semisimple part of $\mathsf{M}$. Set
$$\chi=\eta^{-1}.$$
The following computation of $\fm$ is a modification of that in the
proof of \cite[Theorem 3.11]{hj12} which is based on the proof of
the main result of \cite[Theorem 3.3]{lus90}. First, by the Jordan decomposition, we
have
$$\begin{aligned}
\fm
&=\frac{1}{\abs{\mathsf{M}}}\sum_{\gamma_s\gamma_u\in\mathsf{M}}\Theta(\gamma_s\gamma_u)\chi(\gamma_s\gamma_u)\\
&=\frac{1}{\abs{\mathsf{M}}}\sum_{\gamma_s\in\mathsf{M}_\ss}\chi(\gamma_s)\sum_{\gamma_u\in
C_\mathsf{G}(\gamma_s)(k_F)\cap\mathsf{G}(k_F)^\theta_\unip}\Theta(\gamma_s\gamma_u)\\
&=\frac{{\sigma(\mathsf{G})\sigma(\mathsf{S^u})}}{\abs{\mathsf{M}}}\sum_{\gamma_s\in
\mathsf{M}_\ss}\chi(\gamma_s)\sum_{\gamma_u\in
C_\mathsf{G}(\gamma_s)(k_F)\cap\mathsf{G}(k_F)^\theta_\unip}\frac{1}{\abs{C_\mathsf{G}(\gamma_s)(k_F)}}\\
&\quad \cdot\sum_{\mbox{\tiny$\begin{array}{c}y\in\mathsf{G}(k_F)\\
y^{-1}\gamma_s y\in\bar{\mathsf{S}}(k_F)\end{array}$}}
\mu(y^{-1}\gamma_s y)Q^{C_\mathsf{G}(\gamma_s)}_{y\mathsf{S^u}y^{-1},1}(\gamma_u)\\
&=\frac{{\sigma(\mathsf{G})\sigma(\mathsf{S^u})}}{\abs{\mathsf{M}}}\sum_{\gamma_s\in
\mathsf{M}_\ss}
\sum_{\mbox{\tiny$\begin{array}{c}y\in\mathsf{G}(k_F)\\
y^{-1}\gamma_sy\in\bar{\mathsf{S}}(k_F)\end{array}$}}
\frac{\mu(y^{-1}\gamma_sy)\chi(\gamma_s)}{\abs{C_\mathsf{G}(\gamma_s)(k_F)}}\\
&\quad\cdot\sum_{\gamma_u\in
C_\mathsf{G}(\gamma_s)(k_F)\cap\mathsf{G}(k_F)^\theta_\unip}Q^{C_\mathsf{G}(\gamma_s)}_{y\mathsf{S^u}y^{-1},1}(\gamma_u).
\end{aligned}$$
By \cite[Theorem 3.4]{lus90}, we have
$$\begin{aligned}&\sum_{\gamma_u\in
C_\mathsf{G}(\gamma_s)(k_F)\cap\mathsf{G}(k_F)^\theta_\unip}
Q^{C_\mathsf{G}(\gamma_s)}_{y\mathsf{S^u}y^{-1},1}(\gamma_u)\\
=&\frac{\sigma(\mathsf{S^u})}{\abs{\mathsf{S^u}(k_F)}}\sum_{\mbox{\tiny$\begin{array}{c}g\in C_\mathsf{G}(\gamma_s)(k_F)\\
(y^{-1}g\cdot\theta)(\mathsf{S^u})=\mathsf{S^u}\end{array}$}}
\sigma\left(C_{C_\mathsf{G}(\gamma_s)}\left(({^{y^{-1}g}\mathsf{S^u}})^{\theta,\circ}\right)\right).
\end{aligned}$$
Therefore
$$\begin{aligned}\fm&=\frac{\sigma(\mathsf{G})}
{\abs{\mathsf{M}}\cdot\abs{\mathsf{S^u}(k_F)}}\sum_{\gamma\in
\mathsf{M}_\ss}
\sum_{\mbox{\tiny$\begin{array}{c}y\in\mathsf{G}(k_F)\\
y^{-1}\gamma y\in\mathsf{\bar{S}}(k_F)\end{array}$}}
\frac{\mu(y^{-1}\gamma y)\chi(\gamma)}{\abs{C_\mathsf{G}(\gamma)(k_F)}}\\
&\quad\cdot
\sum_{\mbox{\tiny$\begin{array}{c}g\in C_\mathsf{G}(\gamma)(k_F)\\
(y^{-1}g\cdot\theta)(\mathsf{S^u})=\mathsf{S^u}\end{array}$}}
\sigma\left(C_{C_\mathsf{G}(\gamma)}\left(({^{y^{-1}g}\mathsf{S^u}})^{\theta,\circ}\right)\right)
\end{aligned}$$
Changing variables $y^{-1}\gamma y\mapsto\gamma_1$ and
$y^{-1}g\mapsto y_1$, we obtain
$$\begin{aligned}
\fm=\frac{\sigma(\mathsf{G})}
{\abs{\mathsf{M}}\cdot\abs{\mathsf{S^u}(k_F)}}
\sum_{\mbox{\tiny$\begin{array}{c}(\gamma_1,y_1)\in
\bar{\mathsf{S}}(k_F)\times\mathsf{G}(k_F)\\
y^{-1}_1\gamma_1y_1\in\mathsf{M}\\
(y_1\cdot\theta)(\mathsf{S^u})=\mathsf{S^u}\end{array}$}}\mu(\gamma_1)\chi(y_1^{-1}\gamma_1y_1)
\sigma\left(C_{C_\mathsf{G}(y_1^{-1}\gamma_1y_1)}
\left(({^{y_1}\mathsf{S^u}})^{\theta,\circ}\right)\right).
\end{aligned}$$
For each $y_1\in\mathsf{G}(k_F)$ in the above summation, we choose
an arbitrary lift $\dot{y}_1\in G(F)_{x,0}$ of $y_1$. The maximal
torus of $\mathsf{G}$ which corresponds to $^{\dot{y}_1}S$ is
$^{y_1}\mathsf{S^u}$. Hence by Lemma \ref{lem. stable torus} there
exists a $\theta$-stable torus $S_1$ which is
$G(F)_{x,0+}$-conjugate to $^{\dot{y}_1}S$ and thus
$G(F)_{x,0}$-conjugate to $S$. We have
$\mathsf{S}_1(k_F)=y_1^{-1}\mathsf{S}(k_F)y_1$ which is
$\theta$-stable. Denote by $\varepsilon_{^{y_1}\mathsf{S}}$ the
character $\varepsilon_{\mathsf{S_1}}$ defined by (\ref{equ.
chracter varepsilon}). Note that $\varepsilon_{^{y_1}\mathsf{S}}$ is
well defined since it is independent of the choices of $\dot{y}_1$
and $S_1$. According to the definition of the character
$\varepsilon_{^{y_1}\mathsf{S}}$, we have
$$\begin{aligned}
\sigma\left(C_{C_\mathsf{G}(y_1^{-1}\gamma_1y_1)}
\left(({^{y_1}\mathsf{S^u}})^{\theta,\circ}\right)\right)
&=\sigma\left(C_\mathsf{G}
\left(({^{y_1}\mathsf{S^u}})^{\theta,\circ}\right)\cap
C_\mathsf{G}\left(y_1^{-1}\gamma_1y_1\right)\right)\\
&=\varepsilon_{^{y_1}\mathsf{S}}(y_1^{-1}\gamma_1y_1)
\sigma\left(C_\mathsf{G}
\left(({^{y_1}\mathsf{S^u}})^{\theta,\circ}\right)\right).
\end{aligned}$$
Thus,
$$\fm=\frac{\sigma(\mathsf{G})}
{\abs{\mathsf{M}}\cdot\abs{\mathsf{S^u}(k_F)}}
\sum_{\mbox{\tiny$\begin{array}{c}(\gamma_1,y_1)\in
\bar{\mathsf{S}}(k_F)\times\mathsf{G}(k_F)\\
y^{-1}_1\gamma_1y_1\in\mathsf{M}\\
(y_1\cdot\theta)(\mathsf{S^u})=\mathsf{S^u}\end{array}$}}\mu(\gamma_1)\chi(y_1^{-1}\gamma_1y_1)
\varepsilon_{^{y_1}\mathsf{S}}(y_1^{-1}\gamma_1y_1)
\sigma\left(C_\mathsf{G}
\left(({^{y_1}\mathsf{S^u}})^{\theta,\circ}\right)\right).
$$
Changing variables $y_1^{-1}\gamma_1y_1\mapsto\gamma_2$, we get
$$\begin{aligned}
\fm=&\frac{\sigma(\mathsf{G})}
{\abs{\mathsf{M}}\cdot\abs{\mathsf{S^u}(k_F)}}
\sum_{\mbox{\tiny$\begin{array}{c}y_1\in\mathsf{G}(k_F)\\
(y_1\cdot\theta)(\mathsf{S^u})=\mathsf{S^u}\end{array}$}}
\sigma\left(C_\mathsf{G}
\left(({^{y_1}\mathsf{S^u}})^{\theta,\circ}\right)\right)\\
&\cdot\sum_{\gamma_2\in \left(y_1^{-1}\bar{\mathsf{S}}(k_F)y_1\right)\cap \mathsf{M}}
{(^{y_1}\mu)}(\gamma_2)\chi(\gamma_2)
\varepsilon_{^{y_1}\mathsf{S}}(\gamma_2).
\end{aligned}$$
The term
$$\fm_{y_1}:=\sum_{\gamma_2\in \left(y_1^{-1}\bar{\mathsf{S}}(k_F)y_1\right)\cap \mathsf{M}}
{(^{y_1}\mu)}(\gamma_2)\chi(\gamma_2)
\varepsilon_{^{y_1}\mathsf{S}}(\gamma_2)$$ is a positive integer
precisely when
\begin{equation}\label{equ. character relation}
^{y_1}\mu|_{\left(^{y_1}\mathsf{S}\right)^\theta(k_F)}
=\varepsilon_{^{y_1}\mathsf{S}}\cdot\eta|_{\left(^{y_1}\mathsf{S}\right)^\theta(k_F)},\end{equation}
which is equivalent to
\begin{equation}\label{equ. character relation 2}
\mu_1|_{S_1^\theta(F)}
=\varepsilon_{S_1}\cdot\eta|_{S_1^\theta(F)},\end{equation} where
$\mu_1={^g\mu}$ and $g\in G(F)_{x,0}$ is such that $S_1={^gS}$.
Otherwise $\fm_{y_1}$ is zero. At this moment, we have proved the
first assertion of the proposition.

To prove the second assertion, first note that the relation
(\ref{equ. character relation}) implies that
\begin{equation}\label{equ. character relation 3}
\mu_1|_{(\mathsf{S^u_1})^\theta(k_F)}=\varepsilon_{\mathsf{S^u_1}}
\cdot\eta|_{(\mathsf{S^u_1})^\theta(k_F)}.\end{equation} Therefore,
according to (\ref{equ. character varepsilon relation}) and the
condition of the second assertion, (\ref{equ. character relation 3})
implies that
\begin{equation}\label{equ. character condition finite torus}
\mu_1|_{(\mathsf{S^u_1})^{\theta,\circ}(k_F)}=1.\end{equation} By
\cite[Lemmas 10.4 and 10.5]{lus90} and its slight generalization
\cite[Lemma 8.1]{hak13}, the condition (\ref{equ. character
condition finite torus}) implies that
$$\sigma\left(C_\mathsf{G}
\left(({\mathsf{S^u_1}})^{\theta,\circ}\right)\right)=\sigma(\mathsf{G}).$$
Therefore the multiplicity $\fm$ is equal to
$$\frac{1}
{\abs{\mathsf{M}}\cdot\abs{\mathsf{S^u}(k_F)}}
\sum_{\mbox{\tiny$\begin{array}{c}y\in\mathsf{G}(k_F)\\
(y\cdot\theta)(\mathsf{S^u})=\mathsf{S^u}\end{array}$}}\fm_y,$$
which implies the second assertion of the proposition directly.
\end{proof}

\subsubsection{Proof of Theorem \ref{thm. main theorem}}\label{subsubsec. proof general}

Now let $\pi=\pi_{(S,\mu)}$ be a regular supercuspidal
representation of $G(F)$ and
$\Psi=(\vec{G},\pi_{(S,\mu_\circ)},\vec{\phi})$ a Howe factorization
of $(S,\mu)$. Before proving Theorem \ref{thm. main theorem}, let us
remind the reader of the following notation that will be frequently
used:
\begin{itemize}
\item $\phi=\prod\limits_{i=0}^d \phi_i$, a character of $G^0(F)$,
\item $x\in\CB^\red(G^0,F)$ is the vertex determined by $S$,
\item $K^0=G^0(F)_x$ and $G^0_S=S(F)G^0(F)_{x,0}$,
\item $\kappa:=\kappa_{(S,\mu_\circ)}$ and $\tilde{\kappa}:=\tilde{\kappa}_{(S,\mu_\circ)}$
are representations of $G^0(F)_{x,0}$ and $G^0_S$ respectively,
which are constructed from the depth-zero tame regular elliptic pair $(S,\mu_\circ)$ for $G^0$, as described in \S\ref{subsubsec. tame regular pairs depth-zero case}, except that $G$ is replaced by $G^0$,
\item
$\rho:=\rho_{(S,\mu_\circ)}=\ind^{K^0}_{G^0_S}\tilde{\kappa}_{(S,\mu_\circ)}$,
\item we will abuse the notation to also denote by $\Psi$ the generic cuspidal $G$-datum $(\vec{G},x,\rho,\vec{\phi})$,
\item $\rho_\nm:=\rho\otimes\left(\phi|_{K^0}\right)$,
\item
$\tilde{\eta}_\theta:=\eta_\theta\cdot\left(\phi^{-1}|_{K^{0,\theta}}\right)$ if $[\theta]\sim[\Psi]$.

\end{itemize}

\paragraph{\bf Sufficient condition.}
Let us first prove the sufficient condition of Theorem \ref{thm.
main theorem}. According to Lemma \ref{lem. parameterize regular
repn} we can and do assume that the tame regular elliptic pair
$(S,\mu)$ is $(\theta,\varepsilon\eta)$-symmetric, which implies
that $[\theta]\sim[\Psi]$ by Corollary \ref{cor. theta symmetric}.
By Lemma \ref{lem. theta Howe factorization} we can further assume
that $\phi|_{(S^u)^{\theta,\circ}(F)}=1$. To show that $\pi$ is
$H$-distinguished, according to Theorem \ref{thm. HM distinction},
it suffices to show
$$\Hom_{K^{0,\theta}}\left(\rho_\nm,\eta_\theta\right)\neq0,$$ which
is equivalent to
$$\Hom_{K^{0,\theta}}\left(\rho,\tilde{\eta}_\theta\right)\neq0.$$
Since $\rho=\ind_{G_S^0}^{K^0}\tilde{\kappa}$, applying Mackey
theory, we have
\begin{equation}\label{equ. Mackey }\Hom_{K^{0,\theta}}(\rho,\tilde{\eta}_\theta)
\cong\bigoplus_{g\in G^0_S\bs K^0/K^{0,\theta}} \Hom_{G^0_S\cap
{^gK^{0,\theta}}}(\tilde{\kappa},{^g\tilde{\eta}_\theta}).\end{equation}
Consider the case when $g=1$. Note that $G^0_S\cap
K^{0,\theta}=G_S^{0,\theta}$. The condition that $(S,\mu)$ is
$(\theta,\varepsilon\eta)$-symmetric implies that $(S,\mu_\circ)$ is
$(\theta,\varepsilon\cdot\tilde{\eta}_\theta)$-symmetric in the
sense of Section \ref{subsubsec. multiplicity}. By Proposition
\ref{prop. multiplicity}, Lemma \ref{lem. key lemma} and the
condition $\phi|_{(S^u)^{\theta,\circ}(F)}=1$, we obtain that
$$\Hom_{G_S^{0,\theta}}(\tilde{\kappa},\tilde{\eta}_\theta)\neq0.$$ Thus
$\Hom_{K^{0,\theta}}(\rho_\nm,\eta_\theta)$ is non-zero.

\paragraph{\bf Necessary condition.}
Now let us turn to proving the necessary condition of Theorem
\ref{thm. main theorem}. According to Theorem \ref{thm. HM
distinction}, Theorem \ref{thm. HM G-equivalent} and Lemma \ref{lem.
parameterize regular repn}, we can and do assume that the generic cuspidal cuspidal $G$-datum $\Psi$ associated to
a Howe factorization of $(S,\mu)$ satisfies $[\theta]\sim[\Psi]$ and
$$\Hom_{K^{0,\theta}}(\rho_\nm,\eta_\theta)
=\Hom_{K^{0,\theta}}(\rho,\tilde{\eta}_\theta)\neq0.$$ Due to the
isomorphism (\ref{equ. Mackey }), there exists $k\in K^0$ such that
\begin{equation}\label{equ. main proof 1}\Hom_{G^0_S\cap
{^kK^{0,\theta}}}\left(\tilde{\kappa},{^{k}\tilde{\eta}_{\theta}}\right)\neq
0.\end{equation} Set $\theta'=k^{-1}\cdot\theta$. Then
$$^kK^{0,\theta}=K^{0,\theta'}\quad \textrm{and}\quad
{^k\tilde{\eta}_{\theta}}=\tilde{\eta}_{\theta'}.$$ Since $k\in
K^0$, we have $[\theta']=[\theta]\sim[\Psi]$, which implies that
$G^0(F)_{x,0}$ is $\theta'$-stable. Therefore,
$$G^0(F)_{x,0}^{\theta'}\subseteq G^0_S\cap K^{0,\theta'}.$$ Recall that
$\tilde{\kappa}$ is an extension of $\kappa$. Hence, by (\ref{equ.
main proof 1}), we have
\begin{equation}\label{equ. main proof 2}\Hom_{G^0(F)_{x,0}^{\theta'}}(\kappa,\tilde{\eta}_{\theta'})\neq 0.\end{equation} By \cite[Proposition 2.12]{hm08}, we see $G^0(F)^{\theta'}_{x,0}/G^0(F)^{\theta'}_{x,0+}=\mathsf{G^0}_x(k_F)^{\theta'}$. Hence (\ref{equ. main proof 2})
is equivalent to
$$\Hom_{\mathsf{G^0}_x(k_F)^{\theta'}}(\kappa,\tilde{\eta}_{\theta'})\neq 0.$$
Recall that $\kappa=\pm R_{\mathsf{S^u},\bar{\mu}_\circ}$. By Lemma
\ref{lem. disctinction finite field } we know that there exists
$\bar{y}\in\mathsf{G^0}_x(k_F)$ such that $^{\bar{y}}\mathsf{S^u}$
is $\theta'$-stable. Thus, by Lemma \ref{lem. stable torus}, there
exists $y\in G^0(F)_{x,0}$ such that ${^{y}S}$ is $\theta'$-stable.
Note that $G^0_S$ is also equal to $^yS(F)G^0(F)_{x,0}$, which
implies that $G^0_S$ is $\theta'$-stable. We deduce from (\ref{equ.
main proof 1}) that
$$\Hom_{G_S^{0,\theta'}}(\tilde{\kappa},\tilde{\eta}_{\theta'})\neq 0.$$
According to Proposition \ref{prop. multiplicity}, there exists
$z\in G^0(F)_{x,0}$ such that $^zS$ is $\theta'$-stable and
$$\begin{aligned}^z\mu_\circ|_{^zS^{\theta'}(F)}
&=\varepsilon_{^zS}\cdot\tilde{\eta}_{\theta'}|_{^zS^{\theta'}(F)}\\
&=\varepsilon_{^zS}\cdot\eta_{\theta'}\cdot\phi^{-1}|_{^zS^{\theta'}(F)},\end{aligned}$$
where the character $\varepsilon_{^zS}$ is defined with respect to
the involution $\theta'$. Therefore we have
$$^z\mu|_{^zS^{\theta'}(F)}=\varepsilon_{^zS}\cdot\eta_{\theta'}|_{^zS^{\theta'}(F)}.$$
Recall that $\theta'=k'\cdot\theta$ with $k'=k^{-1}\in K^0$. Since
$^zS$ is $\theta'$-stable, we have that $^{zk'}S$ is $\theta$-stable
and
$$^{zk'}\mu|_{^{zk'}S^\theta(F)}={^{k'}\varepsilon_{^zS}}\cdot
{^{k'}\eta_{\theta'}}|_{^{zk'}S^\theta(F)}.$$ It is easy to see that
$${^{k'}\eta_{\theta'}}=\eta_\theta,$$ and
$${^{k'}\varepsilon_{^zS}}=\varepsilon_{^{zk'}S}$$ where the
latter character is defined with respect to the involution $\theta$.
In summary we conclude that $({^{zk'}S},{^{zk'}\mu})$ is
$(\theta,\varepsilon\eta)$-symmetric.

\section{Functoriality}

In this section, we assume that $F$ has characteristic zero. Let $G$ be a connected tamely ramified quasi-split reductive group
over $F$, and $\wh{G}$
the complex Langlands dual group of $G$. We fix $\Gamma$-invariant splittings $(T,B,\{X_\alpha\})$ of $G$ and
$(\wh{T},\wh{B},\{X_{\wh{\alpha}}\})$ of $\wh{G}$. Recall that a splitting of a reductive group $G$, also called a pinning, is a triple $(T,B,\{X_\alpha\})$ where $T$ is maximal torus of $G$, $B\supset T$ is a Borel subgroup, and $X_\alpha$ is a non-zero vector in the root subspace $\fg_\alpha$ where $\alpha$ runs over the set of simple $B$-positive roots. Let $^LG=\wh{G}\rtimes
W_F$ be the Weil-form $L$-group.

\subsection{Regular supercuspidal $L$-packets}\label{subsec. regular supercuspidal
L-packets} In this subsection, we recall Kaletha's construction
\cite[\S5]{kal} of the compound $L$-packets $\Pi_\varphi$ for
regular supercuspidal $L$-parameters $\varphi$.

\subsubsection{Regular supercuspidal $L$-parameters and $L$-packet data}
The following Definitions \ref{defn. regular supercuspidal parameter} and \ref{defn. regular supercuspidal L-packet data} were introduced by Kaletha. See \cite[Lemma 5.2.2, Definitions 5.2.3 and 5.2.4]{kal} and also the paragraph of {\em loc. cit.} below Definition 5.2.3.
\label{subsubsec. regular sc parameter and datum}
\begin{defn}\label{defn. regular supercuspidal parameter}
We call a discrete $L$-parameter $\varphi:W_F\ra{^LG}$ {\em regular
supercuspidal} if it satisfies:
\begin{enumerate}
\item $\varphi(P_F)$ is contained in a torus of $\wh{G}$.
\item $C:=C_{\wh{G}}(\varphi(I_F))$ is a torus.
\item If $n\in N(\wh{T},\wh{M})$ projects onto a non-trivial element
of $\Omega(\wh{S},\wh{M})^\Gamma$, then $n$ does not belong to the
centralizer of $\varphi(I_F)$ in $\wh{G}$. Here we set
$\wh{M}$ to be $C_{\wh{G}}(\varphi(P_F))$, $\wh{T}$ to be $C_{\wh{M}}(C)$, and
$\wh{S}$ to be the $\Gamma$-module with underlying abelian group
$\wh{T}$ and the $\Gamma$-action given by $\Ad(\varphi(-))$.
\end{enumerate}
\end{defn}

\begin{defn}\label{defn. regular supercuspidal L-packet data}
We call a 4-tuple $(S,\wh{j},\chi,\mu)$ a {\em regular supercuspidal
$L$-packet datum} if it satisfies:
\begin{enumerate}
\item $S$ is a torus over $F$ of dimension equal to the absolute rank
of $G$ and splits over a tame extension of $F$,
\item $\wh{j}:\wh{S}\ra\wh{G}$ is an embedding of complex reductive
groups, whose $\wh{G}$-conjugacy class is $\Gamma$-stable. Then
$\wh{j}$ gives rise to a $\Gamma$-stable $G$-conjugacy class $J$ of
{\em admissible embeddings} $S\ra G$. Choose a $\Gamma$-fixed
element $j\in J$, which is defined over $F$, and identify $S$ with
its image $j(S)$ in $G$. We require that $S/Z(G)$ is anisotropic,
which means that $S$ is a tame elliptic maximal torus of $G$,
\item $\mu$ is a character of $S(F)$ such that $(S,\mu)$ is a tame
{\em extra} regular elliptic pair for $G$. The character $\mu$
determines a tamely ramified twisted Levi subgroup $G^0$ of $G$ and
a subgroup $\Omega(S,G^0)$ of $\Omega(S,G)$,
\item $\chi$ is $\Omega(S,G^0)(F)$-invariant {\em minimally ramified} $\chi$-data for
$R(S,G)$.
\end{enumerate}
\end{defn}

We have to explain the terminology in Definition \ref{defn. regular
supercuspidal L-packet data}. The notion admissible embeddings is
standard and is reviewed in \cite[\S5.1]{kal}. For a tame regular
elliptic pair $(S,\mu)$ of $G$, it is called a tame extra regular elliptic pair if the stabilizer
of $\mu|_{S(F)_0}$ in $\Omega(S,G^0)(F)$ is trivial. A set of
$\chi$-data is called minimally ramified if $\chi_\alpha=1$ for
asymmetric $\alpha$, $\chi_\alpha$ is unramified for unramified
symmetric $\alpha$, and $\chi_\alpha$ is tamely ramified for
ramified symmetric $\alpha$.

\begin{remark}\label{rem. category of L-packet data}
We can define morphisms, which are indeed isomorphisms, between
regular supercuspidal $L$-packet data. This enables us to view the
set of regular supercuspidal $L$-packet data as a category. See  {\em loc. cit.} Definition 5.2.5 for more details.
\end{remark}

The relation between regular supercuspidal $L$-packet data and regular supercuspidal parameter is discussed in \cite[Proposition 5.2.7]{kal}. First,
given a regular supercuspidal $L$-packet datum
$(S,\wh{j},\chi,\mu)$, let $$\varphi_{S,\mu}:W_F\ra{^LS}$$ be the
Langlands parameter corresponding to the character $\mu$ of $S(F)$,
and
$${^Lj_\chi}:{^LS}\ra{^LG}$$ the $L$-embedding extending $\wh{j}$
that is determined by the $\chi$-data $\chi$. Set
$\varphi={^Lj_\chi}\circ\varphi_{S,\mu}$. It is shown that $\varphi$ is a
regular supercuspidal parameter. Conversely, given a regular
supercuspidal parameter $\varphi$, Kaletha proved that there exists a regular
supercuspidal $L$-packet datum $(S,\wh{j},\chi,\mu)$ such that
$\varphi={^Lj_\chi}\circ\varphi_{S,\mu}$. The following is
{\em loc. cit.} Proposition 5.2.7.

\begin{prop}
\label{prop. correspondence between parameters and data} The above
process provides an 1-1 correspondence between the isomorphism
classes of regular supercuspidal $L$-packet data and the
$\wh{G}$-conjugacy classes of regular supercuspidal parameters.
\end{prop}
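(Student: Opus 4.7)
}

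The plan is to describe explicit maps in both directions and check they descend to inverse bijections on the respective sets of classes. For the forward direction, given a regular supercuspidal $L$-packet datum $(S,\wh{j},\chi,\mu)$, I would form $\varphi_{S,\mu}:W_F\to{^LS}$ via local Langlands for the torus $S$, choose the $L$-embedding ${^Lj_\chi}:{^LS}\to{^LG}$ associated to $\chi$ by the Langlands--Shelstad construction, and set $\varphi={^Lj_\chi}\circ\varphi_{S,\mu}$. The content is to verify the three defining properties in Definition \ref{defn. regular supercuspidal parameter}. First I would unwind ${^Lj_\chi}$ on the wild inertia: since $\chi$-data are minimally ramified, the values of ${^Lj_\chi}$ on $P_F$ lie inside $\wh{j}(\wh{S})$, and so $\varphi(P_F)$ sits in the torus $\wh{j}(\wh{S})$. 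For the second condition, I would use that $(S,\mu)$ being tame regular and extra forces the stabilizer under $\Omega(S,G)(F)$ of $\mu|_{S(F)_0}$ to be contained in $\Omega(S,G^0)(F)$ and then trivial, which dually forces $C_{\wh{G}}(\varphi(I_F))$ to reduce to the torus $\wh{j}(\wh{S})$. The third condition is then essentially a reformulation of the extra condition via Kottwitz duality. The ``discrete'' condition follows from $S/Z(G)$ being anisotropic.

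For the reverse direction, starting with a regular supercuspidal parameter $\varphi$, I would run the inverse recipe already sketched in Definition \ref{defn. regular supercuspidal parameter}: set $\wh{M}=C_{\wh{G}}(\varphi(P_F))$, observe that property (1) makes $\wh{M}$ a Levi of $\wh{G}$ whose derived group contains $\varphi(I_F)$ only through its torus; set $C=C_{\wh{G}}(\varphi(I_F))$; and take $\wh{T}=C_{\wh{M}}(C)$, which by property (2) is a maximal torus of $\wh{M}$. The $\Gamma$-action via $\Ad(\varphi(-))$ on $\wh{T}$ produces the $\Gamma$-module $\wh{S}$, hence an $F$-torus $S$, together with the embedding $\wh{j}:\wh{S}\hookrightarrow\wh{G}$ whose $\wh{G}$-conjugacy class is $\Gamma$-stable by construction. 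I would then verify that $S$ is tame of absolute rank equal to that of $G$ and that $S/Z(G)$ is anisotropic (ellipticity), using property (3) together with the fact that $\varphi$ is discrete. The character $\mu$ is recovered by factoring $\varphi$ through ${^LS}$: there is a unique $L$-embedding ${^Lj}:{^LS}\to{^LG}$ extending $\wh{j}$ once a minimally ramified set of $\chi$-data has been chosen, and then $\mu$ is the character of $S(F)$ whose Langlands parameter is $({^Lj})^{-1}\circ\varphi$.

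The step I expect to be the main obstacle is producing the $\chi$-data. Given $\varphi$, the embedding ${^Lj}$ is only canonical up to the torsor of cocycles produced by $\chi$-data, and one must check that, among the possible choices, there is precisely one minimally ramified $\Omega(S,G^0)(F)$-invariant set of $\chi$-data that makes $({^Lj})^{-1}\circ\varphi$ land in ${^LS}$ and makes $\mu$ satisfy the extra regularity condition. I would handle this by a root-by-root analysis: for each $\Gamma$-orbit on $R(S,G)$ one knows from property (1) that the restriction of $\varphi$ to the relevant inertia subquotient is tame, and the minimally ramified prescription (trivial on asymmetric, unramified on unramified symmetric, tamely ramified on ramified symmetric) is forced by the ramification type of $\varphi|_{I_F}$ at that root. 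The regularity of $\mu$ and its extra property then follow from properties (2) and (3) of $\varphi$, respectively.

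Finally, to upgrade both constructions to a bijection on classes, I would observe that the recipes are canonical up to the only ambiguity that was made: in the forward map, replacing $(S,\wh{j},\chi,\mu)$ by an isomorphic datum in the sense of Remark \ref{rem. category of L-packet data} amounts to conjugating $\wh{j}$ by an element of $\wh{G}$, hence conjugating $\varphi$; in the backward map, the choice of a $\Gamma$-fixed representative of $J$ and of $\wh{T}$ inside its $\wh{G}$-conjugacy class is unique up to the isomorphisms built into the definition of $L$-packet data. A direct check that the two constructions are mutually inverse on representatives then finishes the proof.
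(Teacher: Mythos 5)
The paper does not actually prove this proposition: it is stated as a citation of Kaletha's own result (\cite[Proposition 5.2.4]{kal}), with the preceding paragraph merely summarizing how the two constructions go. So there is no ``paper's own proof'' to compare against; I can only assess your sketch on its merits and against what Kaletha does.

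Your overall outline is the right one and follows Kaletha's construction in spirit: in the forward direction you compose $\varphi_{S,\mu}$ with ${^Lj_\chi}$ and verify Definition \ref{defn. regular supercuspidal parameter}; in the backward direction you peel off $\wh{M}$, $C$, $\wh{T}$, and the $\Gamma$-module $\wh{S}$ from $\varphi$. However, there is a genuine error in the step you yourself flag as the main obstacle. You claim that ``there is precisely one minimally ramified $\Omega(S,G^0)(F)$-invariant set of $\chi$-data'' that factors $\varphi$ through ${^LS}$ and produces an extra regular $\mu$, and you argue this by a root-by-root ramification analysis. This is false: minimally ramified $\chi$-data is only a constraint on ramification type, not a unique prescription (for a symmetric ramified root, say, there are several tamely ramified choices of $\chi_\alpha$ restricting to the right quadratic character on $F_{\pm\alpha}^\times$). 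Different admissible choices of $\chi$ yield different $L$-embeddings ${^Lj_\chi}$, hence different factorizations $\varphi = {^Lj_\chi}\circ\varphi_{S,\mu}$, hence different characters $\mu$. What makes the correspondence well-defined on \emph{isomorphism classes} is precisely that Kaletha's notion of morphism of $L$-packet data (alluded to in Remark \ref{rem. category of L-packet data}) builds in the twist of $\mu$ by the character $\zeta_{\chi/\chi'}$ that compensates for changing $\chi$ to $\chi'$. Your concluding paragraph gestures at ``isomorphisms built into the definition,'' but it is internally inconsistent with the ``precisely one'' claim made two paragraphs earlier. The fix is to drop the uniqueness claim, observe that the set of valid pairs $(\chi,\mu)$ for a given $\varphi$ forms a single orbit under the isomorphism group of the datum, and check that the forward and backward constructions are inverse on orbits rather than on representatives.

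A secondary imprecision: in the forward direction you attribute the verification of condition (2) to the pair being ``tame regular and extra'' forcing trivial stabilizer in $\Omega(S,G)(F)$. Tame regularity concerns the stabilizer in $\Omega(S,G^0)(F)$, and the extra condition refines this; the dual statement you want is that $C_{\wh{G}}(\varphi(I_F))$ equals $\wh{j}(\wh{S})$, and the argument needs to track carefully which of the two conditions (regular vs.\ extra) gives condition (2) vs.\ condition (3) of Definition \ref{defn. regular supercuspidal parameter}. As written your sketch conflates them.
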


\subsubsection{Rigid inner twists}
Kaletha \cite[\S3.2]{kal16a} defined a set $Z^1(u\ra W,Z\ra G)$ of
cocycles and a cohomology set $H^1(u\ra W,Z\ra G)$ for any finite
central subgroup $Z$ of $G$, where $u$ is a multiplicative
pro-algebraic group and $W$ a fixed extension of $\Gamma$ by $u$. We
have natural maps $$Z^1(u\ra W,Z\ra G)\ra Z^1(\Gamma, G/Z)\ra
Z^1(\Gamma, G_\ad)$$ and
$$H^1(u\ra W,Z\ra G)\ra H^1(\Gamma, G/Z)\ra
H^1(\Gamma, G_\ad),$$ which are induced by the projection $G\ra
G/Z$. Recall that an inner twist $G\ra G'$ of $G$ gives rise to a
cocycle $z\in Z^1(\Gamma,G_\ad)$, and the set of isomorphism classes
of inner twists is parameterized by $H^1(\Gamma,G_\ad)$. We call the
triple $(G',\xi,z)$ a {\em rigid inner twist} of $G$ if $\xi:G\ra
G'$ is an inner twist and $z$ is a cocycle in  $Z^1(u\ra W,Z\ra G)$
for some $Z$ such that $\xi$ corresponds to the image of $z$ in
$Z^1(\Gamma, G_\ad)$. The set of isomorphism classes of rigid inner
twists given by $Z^1(u\ra W,Z\ra G)$ is parameterized by $H^1(u\ra
W,Z\ra G)$.

\subsubsection{Regular supercuspidal data and $L$-packets}

\begin{defn}\label{defn. regular supercuspidal data}
We call a tuple $(S,\wh{j},\chi,\mu,(G',\xi,z),j)$ a {\em regular
supercuspidal datum} if it satisfies:
\begin{enumerate}
\item $(S,\wh{j},\chi,\mu)$ is a regular supercuspidal $L$-packet
datum,
\item $(G',\xi,z)$ is a rigid inner twist of $G$,
\item $j:S\ra G'$ is an admissible embedding over $F$ with respect
to $\wh{j}$.
\end{enumerate}
\end{defn}

\begin{remark}\label{rem. category of data}
The above definition is \cite[Definition 5.3.2]{kal}.
We can also make the set of regular supercuspidal data being a
category. There is a natural forgetful functor from it onto the
category of regular supercuspidal $L$-packet data. Given a regular
supercuspidal $L$-packet datum $(S,\wh{j},\chi,\mu)$, the set of
isomorphism classes of regular supercuspidal data mapping to it is a
torsor under $H^1(u\ra W,Z\ra S)$. See {\em loc. cit.} Definition 5.3.3 and the paragraph below it.
\end{remark}

According to \cite[\S5.3]{kal}, for a regular supercuspidal datum
$(S,\wh{j},\chi,\mu,(G',\xi,z),j)$, in order to define the associated $L$-packet, we have to first modify it to be a proper datum
$(S,\wh{j},\chi^\new,\mu^\new,(G',\xi,z),j)$ which is isomorphic to $(S,\wh{j},\chi,\mu,(G',\xi,z),j)$. 
We refer to {\em loc. cit.}, especially pages 1076 and 1153, for the reason of this modification, and refer to {\em loc. cit.}  Steps 1 and 2 in \S5.3 for the definitions of $\chi^\new$ and $\mu^\new$. The following two definitions are given at {\em loc. cit.} page 1154.

\begin{defn}\label{defn. representations in packets}
Given a regular supercuspidal datum
$(S,\wh{j},\chi,\mu,(G',\xi,z),j)$, we set $\pi_{(S_j,\mu_j)}$ to be
the regular supercuspidal representation of $G'(F)$ associated to
the tame regular elliptic pair $(S_j,\mu_j)$ where
\begin{itemize}
\item $S_j$ is the image of $S$ in $G'$ under $j$,
\item $\mu_j:=(\mu^\new\circ j^{-1})\cdot\epsilon_{f,\ram}\cdot\epsilon^\ram$
 is a character of $S_j(F)$. Here $\epsilon_{f,\ram}$ and $\epsilon^\ram$
are certain quadratic characters of $S_j(F)$, whose definitions are given in
\cite[Definition 4.7.3 and (4.3.3)]{kal} respectively.
\end{itemize}
To avoid confusion, we will also write $\epsilon_{f,\ram,S_j}$ and $\epsilon^\ram_{S_j}$ instead of $\epsilon_{f,\ram}$ and $\epsilon^\ram$ respectively to indicate that they are characters of $S_j(F)$.
\end{defn}

\begin{defn}\label{defn. L-packets}
Let $\varphi$ be a regular supercuspidal $L$-parameter and
$(S,\wh{j},\chi,\mu)$ a regular supercuspidal $L$-datum
corresponding to $\varphi$. For each rigid inner twist $(G',\xi,z)$,
we define the $L$-packet $\Pi_\varphi(G')$ to be
$$\Pi_\varphi(G')=\{\pi_j\}$$ where
$(S,\wh{j},\chi,\mu,(G',\xi,z),j)$ runs over the set of isomorphism
classes of regular supercuspidal data mapping to
$(S,\wh{j},\chi,\mu)$ and $\pi_j:=\pi_{(S_j,\mu_j)}$. We define the
compound $L$-packet $\Pi_\varphi$ to be the disjoint union
$$\Pi_\varphi=\bigsqcup\Pi_\varphi(G')$$ where $(G',\xi,z)$ runs
over the set of isomorphism classes of rigid inner twists of $G$.
\end{defn}

\subsection{Twisted regular supercuspidal $L$-packets}\label{subsec. twisted
L-packet}

Now let $\theta$ be
an involution of $G$ and $H=G^\theta$. We denote by $\wh{\theta}$
the involution of $\wh{G}$ dual to $\theta$ with respect to the
fixed splittings. Note that $\wh{\theta}$ commutes with the action
of $\Gamma$ on $\wh{G}$ and can be extended to an $L$-automorphism
${^L\theta}:=\wh{\theta}\times\id_{W_F}$ of ${^LG}$. We refer to \cite[\S1.8]{kot84} for the existence and the basic properties of $\wh{\theta}$. We fix a
regular supercuspidal $L$-parameter $\varphi$ for $G$.

\subsubsection{Twisted regular supercuspidal $L$-parameters and $L$-packet data}
\label{subsubsec. twisted regular sc parameter and datum}

Suppose that $S$ is a maximal torus of $G$, and
$\chi=(\chi_\alpha)_{\alpha\in R(S,G)}$ is $\chi$-data for $R(S,G)$.
For $\alpha\in R(S,G)$, set $\theta(\alpha)=\alpha\circ\theta$,
which is an algebraic character of $\theta(S)$. Then $\theta(\alpha)$
is in $R(\theta(S),G)$, whose root space is $\theta(\fg_\alpha)$.
Hence we obtain a 1-1 correspondence
$$R(S,G)\longleftrightarrow R(\theta(S),G),\quad
\alpha\leftrightarrow\theta(\alpha).$$ Since $\theta$ is defined
over $F$, we have $\Gamma_\alpha=\Gamma_{\theta(\alpha)}$, and thus
$F_\alpha=F_{\theta(\alpha)}$ and
$F_{\pm\alpha}=F_{\pm{\theta(\alpha)}}$ for any $\alpha\in R(S,G)$.
Therefore
$\theta(\chi):=\left(\chi_{\theta(\alpha)}\right)_{\alpha\in
R(S,G)}$ is $\chi$-data for $R(\theta(S),G)$, where
$\chi_{\theta(\alpha)}:F^\times_{\theta(\alpha)}\ra\BC^\times$ is
the character $\chi_\alpha$ by identifying $F_{\theta(\alpha)}=F_\alpha$.

\begin{lem}\label{lem. involution twisted parameter}
The $L$-parameter ${^L\theta}\circ\varphi$ is regular supercuspidal.
Moreover, if $(S,\wh{j},\chi,\mu)$ is a regular supercuspidal
$L$-packet datum corresponding to $\varphi$, then
$(S,\wh{\theta}\circ\wh{j},\theta(\chi),\mu)$ is a regular
supercuspidal $L$-packet datum corresponding to
${^L\theta}\circ\varphi$.
\end{lem}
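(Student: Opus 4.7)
The plan is to verify both assertions in turn, using the intrinsic characterization of regular supercuspidal $L$-parameters and the bijection established in Proposition~\ref{prop. correspondence between parameters and data}.

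For the first assertion, I would check the three conditions in Definition~\ref{defn. regular supercuspidal parameter} directly for ${^L\theta}\circ\varphi$. Since $\wh{\theta}$ is an algebraic automorphism of $\wh{G}$, it sends tori to tori; hence $\wh{\theta}(\varphi(P_F))$ is contained in a torus of $\wh{G}$, giving condition (1). For condition (2), the centralizer of $({^L\theta}\circ\varphi)(I_F)=\wh{\theta}(\varphi(I_F))$ in $\wh{G}$ equals $\wh{\theta}(C_{\wh{G}}(\varphi(I_F)))$, which is still a torus. For condition (3), the pair $(\wh{M},\wh{T})$ attached to ${^L\theta}\circ\varphi$ is just the $\wh{\theta}$-image of the corresponding pair for $\varphi$, and because $\wh{\theta}$ is $\Gamma$-equivariant the $\Gamma$-module structure on $\wh{S}$ is preserved; the absence of the extra normalizer element then follows.

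For the second assertion, I would first check that $(S,\wh{\theta}\circ\wh{j},\theta(\chi),\mu)$ satisfies all four conditions in Definition~\ref{defn. regular supercuspidal L-packet data}. Conditions (1) and (3) are unaffected because $S$ and $\mu$ are unchanged. For (2), the $\wh{G}$-conjugacy class of $\wh{\theta}\circ\wh{j}$ is $\Gamma$-stable since $\wh{\theta}$ commutes with the $\Gamma$-action on $\wh{G}$; and if $j$ is an $F$-rational admissible embedding representing $\wh{j}$, then $\theta\circ j$ is an $F$-rational admissible embedding representing $\wh{\theta}\circ\wh{j}$ (by the very definition of $\wh{\theta}$ being dual to $\theta$). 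The torus $\theta(j(S))$ is again a tame elliptic maximal torus as $\theta$ is an $F$-automorphism. For (4), the $\chi$-data $\theta(\chi)$ is minimally ramified because $\alpha\mapsto\theta(\alpha)$ is a bijection $R(S,G)\ra R(\theta(S),G)$ preserving the classification into asymmetric, unramified symmetric, and ramified symmetric roots (we already noted $\Gamma_\alpha=\Gamma_{\theta(\alpha)}$ and $\Gamma_{\pm\alpha}=\Gamma_{\pm\theta(\alpha)}$); the $\Omega(S,G^0)(F)$-invariance transports through $\theta$ since $\theta$ preserves the Levi subgroup $G^0$ and its Weyl group.

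The main step is then to identify the $L$-parameter attached to $(S,\wh{\theta}\circ\wh{j},\theta(\chi),\mu)$ as ${^L\theta}\circ\varphi$. By the construction recalled in Section~\ref{subsubsec. regular sc parameter and datum}, this parameter is ${^L(\wh{\theta}\circ\wh{j})_{\theta(\chi)}}\circ\varphi_{S,\mu}$; since $\varphi={^Lj_\chi}\circ\varphi_{S,\mu}$ and $\varphi_{S,\mu}$ depends only on the unchanged datum $(S,\mu)$, it suffices to establish the equality
\[
{^L(\wh{\theta}\circ\wh{j})_{\theta(\chi)}}\;=\;{^L\theta}\circ{^Lj_\chi}
\]
of $L$-embeddings ${^LS}\ra{^LG}$. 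I would verify this by transporting the Langlands--Shelstad recipe for ${^Lj_\chi}$ through the $\Gamma$-equivariant automorphism $\wh{\theta}$: every ingredient in the recipe (gauges on $R(S,G)$, cocycles built from the $\chi_\alpha$ indexed by Galois orbits of roots) is carried by $\wh{\theta}$ to the corresponding ingredient indexed over $R(\theta(S),G)$, and the identity $\chi_{\theta(\alpha)}=\theta(\chi)_\alpha$ precisely matches the two $\chi$-data. The uniqueness in Proposition~\ref{prop. correspondence between parameters and data} then concludes the proof. The main obstacle I expect is this last bookkeeping step: one must track the gauge/section choices made in the Langlands--Shelstad construction coherently under $\wh{\theta}$, or else invoke a general functoriality statement for $L$-embeddings under $\Gamma$-equivariant automorphisms of the dual group.
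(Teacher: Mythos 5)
Your proposal is correct and takes essentially the same approach as the paper's own proof: both verify the defining conditions directly for ${^L\theta}\circ\varphi$ and for the proposed datum, then reduce the correspondence claim to the identity ${^L\theta}\circ{^Lj_\chi}={^Lj}_{\theta(\chi)}$, which you (rightly) flag as the main bookkeeping step and the paper dismisses as routine.
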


\begin{proof}
The first assertion, that ${^L\theta}\circ\varphi$ is a regular
supercuspidal $L$-parameter, can be easily verified by checking the
definition.

For the second assertion, it is harmless to assume that the fixed
splitting of $\wh{G}$ satisfies $\wh{j}(\wh{S})=\wh{T}$. Let $j:S\ra
G$ be an admissible embedding over $F$ with respect to $\wh{j}$.
Then $\theta\circ j:S\ra G$ is an admissible embedding over $F$ with
respect to $\wh{\theta}\circ\wh{j}:\wh{S}\ra\wh{G}$. We view $S$ as
a tame regular elliptic maximal torus of $G$ by the embedding $j$.
Then $(\theta(S),\mu\circ\theta)$ is a tame extra regular elliptic
pair for $G$, and $\theta(\chi)$ is
$\Omega(\theta(S),\theta(G^0))$-invariant minimally ramified
$\chi$-data for $R(\theta(S),G)$. In summary,
$(S,\wh{\theta}\circ\wh{j},\theta(\chi),\mu)$ is a regular
supercuspidal $L$-packet datum. It is routine to check that
$${^L\theta}\circ{^Lj_\chi}={^Lj}_{\theta(\chi)}$$ where
${^Lj_{\theta(\chi)}}$ is the $L$-embedding ${^LS}\ra{^LG}$
extending $\wh{\theta}\circ\wh{j}$ that is determined by the
$\chi$-data $\theta(\chi)$. Therefore
$${^L\theta}\circ\varphi={^Lj_{\theta(\chi)}}\circ\varphi_{S,\mu},$$
and thus $(S,\wh{\theta}\circ\wh{j},\theta(\chi),\mu)$ corresponds
to ${^L\theta}\circ\varphi$.
\end{proof}

\subsubsection{Rigid inner twists of symmetric spaces}
\label{subsubsec. rigid inner twists of sym space}
\begin{defn}\label{defn. rigid inner twists of sym space}
\begin{enumerate}
\item Let $(G',\xi,z)$ be a rigid inner twist of $G$. We call
$(G',\xi,z)$ a {\em rigid inner twist of $(G,H,\theta)$} if $z$ lies
in the image of $Z^1(u\ra W,Z\ra H)$ in $Z^1(u\ra W,Z\ra G)$.
\item Let $(G',\xi,z)$ be a rigid inner twist of $(G,H,\theta)$.
We define an
involution $\theta'$ of $G'$ by
$$\theta'=\xi\circ\theta\circ\xi^{-1}.$$
\end{enumerate}
\end{defn}

\begin{lem}\label{lem. involution on inner forms}
The involution $\theta'$ is defined over $F$.
\end{lem}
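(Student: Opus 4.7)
The plan is to unwind what ``defined over $F$'' means for $\theta'$ in terms of the twisted Galois action on $G'(\bar F)$. Recall that, under the identification $\xi:G(\bar F)\xrightarrow{\sim}G'(\bar F)$, the $F$-structure on $G'$ differs from that on $G$ by the cocycle $\bar z\in Z^1(\Gamma,G_{\ad})$ obtained as the image of $z$: explicitly, for $\sigma\in\Gamma$ and $g\in G(\bar F)=G'(\bar F)$, one has
$$\sigma_{G'}(g)=\Ad(\bar z(\sigma))\bigl(\sigma_G(g)\bigr).$$
So proving that $\theta'$ is defined over $F$ amounts to verifying the identity $\sigma_{G'}\circ\theta=\theta\circ\sigma_{G'}$ for every $\sigma\in\Gamma$.

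First I would compute both sides. Using that $\theta$ is $F$-rational on $G$, the left-hand side equals $\Ad(\bar z(\sigma))\circ\theta\circ\sigma_G$, while the right-hand side equals $\Ad(\theta(\bar z(\sigma)))\circ\theta\circ\sigma_G$. Thus the equality $\sigma_{G'}\circ\theta=\theta\circ\sigma_{G'}$ on $G(\bar F)$ is equivalent to
$$\Ad\bigl(\bar z(\sigma)^{-1}\theta(\bar z(\sigma))\bigr)=\mathrm{id}_G,$$
i.e.\ to the statement that $\bar z(\sigma)$ is fixed by $\theta$ in $G_{\ad}(\bar F)$.

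This is where the hypothesis ``rigid inner twist of $(G,H,\theta)$'' is used. By Definition~\ref{defn. rigid inner twists of sym space}(1), $z$ lies in the image of $Z^1(u\to W,Z\to H)$ in $Z^1(u\to W,Z\to G)$; pushing to $Z^1(\Gamma,G_{\ad})$ through the commutative diagram
\begin{equation*}
\begin{array}{ccc}
Z^1(u\to W,Z\to H)&\longrightarrow&Z^1(\Gamma,H_{\ad})\\
\downarrow&&\downarrow\\
Z^1(u\to W,Z\to G)&\longrightarrow&Z^1(\Gamma,G_{\ad})
\end{array}
\end{equation*}
shows that $\bar z(\sigma)$ actually lies in the image of $H_{\ad}(\bar F)\to G_{\ad}(\bar F)$ (or more precisely, comes from an $H$-valued class). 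Since every element of $H=G^{\theta}$ is $\theta$-fixed and $\theta$ descends to $G_{\ad}$, the class of $\bar z(\sigma)$ in $G_{\ad}(\bar F)$ is $\theta$-fixed, giving the desired identity.

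The main (and essentially only) obstacle is bookkeeping around the fact that $\bar z$ lands in $G_{\ad}$ rather than $G$ itself: one must be careful that $\theta$ descends to $G_{\ad}$ (which it does, since $\theta$ stabilizes the center) and that ``$\theta$-fixed in $G_{\ad}$'' is exactly what makes $\Ad(\bar z(\sigma))$ commute with $\theta$. Once this is checked, the conclusion is immediate. I would also remark that the resulting $\theta'$ is indeed an involution of $G'$ over $F$, since $(\theta')^2=\theta^2=\mathrm{id}$ on $G(\bar F)$ and $\theta'$ is obviously an algebraic automorphism.
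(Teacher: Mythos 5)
Your proof is correct and matches the paper's argument: both reduce the $F$-rationality of $\theta'$ to the fact that the twisting cocycle $\bar z_\sigma$ is $\theta$-fixed in the relevant adjoint-type quotient, which follows because $z$ comes from $Z^1(u\to W,Z\to H)$. The only cosmetic difference is that you push all the way to $Z^1(\Gamma,G_{\ad})$, whereas the paper works with $Z^1(\Gamma,G/Z)$ and views $\bar z$ as landing in $H/Z$; the computation of $\sigma_{G'}\circ\theta$ versus $\theta\circ\sigma_{G'}$ is the same in both.
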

\begin{proof}
Let $\bar{z}$ be the image of $z$ in $Z^1(\Gamma,G/Z)$, which is
viewed as an element in $Z^1(\Gamma, H/Z)$ by the condition imposed
on $z$. Then for any $\sigma\in\Gamma$ we have
$$\begin{aligned}\sigma\circ\theta'&=\sigma\circ\xi\circ\theta\circ\xi^{-1}\\
&=\xi\circ\Int(\bar{z}_\sigma)\circ\sigma\circ\theta\circ\xi^{-1}\\
&=\xi\circ\Int(\theta(\bar{z}_\sigma))\circ\theta\circ\sigma\circ\xi^{-1}\\
&=\xi\circ\theta\circ\Int(\bar{z}_\sigma)\circ\sigma\circ\xi^{-1}\\
&=\xi\circ\theta\circ\xi^{-1}\circ\sigma\\
&=\theta'\circ\sigma.
\end{aligned}$$
Therefore $\theta'$ is defined over $F$.
\end{proof}

\begin{remark}\label{rem. inner twists of involutions}
Note that,
according to the definition of $\theta'$, we have
$$\theta'\circ\xi=\xi\circ\theta.$$
Let $H'=(G')^{\theta'}$. Then we have
$$H'(\bar{F})=G'(\bar{F})^{\theta'}=\left(\xi\left(G(\bar{F})\right)\right)^{\theta'}=
\xi\left(G(\bar{F})^{\theta}\right)=\xi\left(H(\bar{F})\right).$$ Thus
the restriction of $(\xi,z)$ onto $H$ gives rise to a rigid inner
twist $\xi_H:H\ra H'$. If $(G',\xi,z)$ is clear, we also call
$(G',H',\theta')$ a rigid inner twist of $(G,H,\theta)$.
\end{remark}

\subsubsection{Twisted regular supercuspidal $L$-packets}

\begin{defn}\label{defn. twisted L-packets}
Let $\varphi$ be a regular supercuspidal $L$-parameter and
$(S,\wh{j},\chi,\mu)$ a regular supercuspidal $L$-datum
corresponding to $\varphi$. For each rigid inner twist
$(G',H',\theta')$ of $(G,H,\theta)$, we define the {\em twisted
$L$-packet} $\Pi^\theta_\varphi(G')$ to be
$$\Pi^\theta_\varphi(G')=\{\pi_j\circ\theta'| \pi_j\in\Pi_\varphi(G')\},$$  and define the
{\em compound twisted $L$-packet} $\Pi_\varphi^{\theta,\circ}$ to be the disjoint union
$$\Pi^{\theta,\circ}_\varphi=\bigsqcup\Pi^{\theta,\circ}_\varphi(G')$$ where $(G',H',\theta')$ runs
over the set of isomorphism classes of rigid inner twists of
$(G,H,\theta)$.
\end{defn}

The way we define the twisted $L$-packet
$\Pi_\varphi^{\theta,\circ}$ is on the level of representations,
that is, we twist the representations in the $L$-packets by
involutions. It is natural to ask whether the twisted $L$-packet
$\Pi_\varphi^{\theta,\circ}$ is indeed a compound $L$-packet in some
sense. The answer is yes. More precisely we have:

\begin{prop}\label{prop. twisted L-packets}
For each rigid inner twist $(G',H',\theta')$ we have
$$\Pi_\varphi^\theta(G')=\Pi_{{^L\theta\circ\varphi}}(G').$$
Therefore we have
$\Pi_\varphi^{\theta,\circ}\subseteq\Pi_{{^L\theta\circ\varphi}}$.
\end{prop}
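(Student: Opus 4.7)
The plan is to exhibit a natural bijection between the isomorphism classes of regular supercuspidal data mapping to $(S,\wh{j},\chi,\mu)$ and those mapping to the datum $(S,\wh{\theta}\circ\wh{j},\theta(\chi),\mu)$ corresponding to ${^L\theta\circ\varphi}$ under Lemma \ref{lem. involution twisted parameter}, and to show that the representations associated to a corresponding pair of data differ precisely by the twist $\pi\mapsto\pi\circ\theta'$. Once these two facts are in hand, the equality $\Pi_\varphi^\theta(G')=\Pi_{{^L\theta\circ\varphi}}(G')$ is immediate, and the compound containment $\Pi_\varphi^{\theta,\circ}\subset\Pi_{{^L\theta\circ\varphi}}$ follows because every rigid inner twist of $(G,H,\theta)$ is, in particular, a rigid inner twist of $G$.

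Concretely, fix a rigid inner twist $(G',\xi,z)$ of $(G,H,\theta)$ with associated $F$-rational involution $\theta'$. Given an admissible embedding $j:S\ra G'$ over $F$ with respect to $\wh{j}$, I would set $j':=\theta'\circ j$. Since $\theta'\circ\xi=\xi\circ\theta$ and $\theta$ is dual to $\wh{\theta}$ with respect to the fixed splittings, a direct verification shows that $j'$ is an admissible embedding of $S$ into $G'$ over $F$ with respect to $\wh{\theta}\circ\wh{j}$, and that the assignment $j\mapsto j'$ is a bijection between the relevant isomorphism classes of data (the rigid inner twist $(G',\xi,z)$ and the abstract torus $S$ being kept fixed throughout). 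This reduces the proposition to the representation-level identification $\pi_{j'}\simeq\pi_j\circ\theta'$.

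For this identification, Yu's construction together with Lemma \ref{lem. parameterize regular repn} yields
$$\pi_{(S_j,\mu_j)}\circ\theta'\simeq\pi_{(\theta'(S_j),\mu_j\circ\theta')}.$$
On the other hand, $S_{j'}=j'(S)=\theta'(S_j)$ and $\mu\circ(j')^{-1}=(\mu\circ j^{-1})\circ\theta'$ by inspection, so $\pi_{j'}\simeq\pi_j\circ\theta'$ amounts to the character identity
$$\epsilon_{f,r,S_{j'}}\cdot\epsilon^r_{S_{j'}}=\left(\epsilon_{f,r,S_j}\cdot\epsilon^r_{S_j}\right)\circ\theta',$$
where the left-hand side is computed using the $\chi$-data $\theta(\chi)$ and the right-hand side using $\chi$. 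This identity is the only step not of a formal nature, and I expect its verification to be the main obstacle.

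To settle it, I would trace the definitions of $\epsilon_{f,r,S_j}$ in \cite[Lemma 4.7.3]{kal} and of $\epsilon^r_{S_j}$ in \cite[(4.3)]{kal}. The key point is that, because $\theta$ is defined over $F$ (so that $\Gamma_\alpha=\Gamma_{\theta(\alpha)}$ and $F_{\pm\alpha}=F_{\pm\theta(\alpha)}$ for every $\alpha\in R(S,G)$), the simultaneous substitution replacing $S_j$ by $\theta'(S_j)$ and $\chi$ by $\theta(\chi)$ preserves the partition of roots into symmetric/asymmetric and ramified/unramified orbits, and sends the local input data entering $\epsilon_{f,r}$ and $\epsilon^r$ bijectively onto each other in a way compatible with evaluation on $S(F)$-points composed with $\theta'$. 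Once this bookkeeping is carried out one obtains the displayed character identity, and the proposition follows from the two reductions above.
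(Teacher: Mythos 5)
Your proposal follows the paper's proof essentially verbatim: both use Lemma \ref{lem. involution twisted parameter} to identify the target $L$-packet datum, both send a regular supercuspidal datum to one for ${^L\theta\circ\varphi}$ via $j\mapsto\theta'\circ j$, and both reduce the representation-level identification $\pi_j\circ\theta'\simeq\pi_{\theta'\circ j}$ to the compatibility of $\epsilon_{f,r}$ and $\epsilon^r$ with the root correspondence $R(S,G')\leftrightarrow R(\theta'(S),G')$. The only cosmetic difference is that the paper checks admissibility of $\theta'\circ j$ by writing $j=\xi\circ\Int(g)\circ j_0$ explicitly and states the two $\epsilon$-identities separately, whereas you assert the product form; the substance is the same.
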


\begin{proof}
Let $(S,\wh{j},\chi,\mu)$ be a regular supercuspidal $L$-datum
corresponding to $\varphi$ and $(S,\wh{j},\chi,\mu,(G',\xi,z),j)$ a
regular supercuspidal datum  such that $(G',\xi,z)$ is a rigid inner
twist of $(G,H,\theta)$. According to Lemma \ref{lem. involution
twisted parameter}, $(S,\wh{\theta}\circ\wh{j},\theta(\chi),\mu)$ is
a regular supercuspidal $L$-datum corresponding to
$^L\theta\circ\varphi$. Choose a $\Gamma$-fixed admissible embedding
$j_0:S\ra G$ with respect to $\wh{j}$. Then $\theta\circ j_0:S\ra G$
is a $\Gamma$-fixed admissible embedding with respect to
$\wh{\theta}\circ\wh{j}$. Since $j:S\ra G'$ is admissible, there
exists $g\in G$ such that $j=\xi\circ\Int(g)\circ j_0$. We have
$$\begin{aligned}
\theta'\circ j&=\theta'\circ\xi\circ\Int(g)\circ j_0\\
&=\xi\circ\theta\circ\Int(g)\circ j_0\\
&=\xi\circ\Int(\theta(g))\circ(\theta\circ j_0).
\end{aligned}$$
Hence $\theta'\circ j:S\ra G'$ is indeed an admissible embedding
with respect to $\wh{\theta}\circ\wh{j}$. Therefore, for those rigid
inner twists $(G',\xi,z)$ of $(G,H,\theta)$, the map
$$(S,\wh{j},\chi,\mu,(G',\xi,z),j)\mapsto (S,\wh{\theta}\circ\wh{j},
\theta(\chi),\mu,(G',\xi,z),\theta'\circ j)$$ establishes an 1-1
correspondence between regular supercuspidal data for $\varphi$ with
regular supercuspidal data for ${^L\theta}\circ\varphi$. We remark that if $(S,\wh{j},\chi^\new,\mu^\new,(G',\xi,z),j)$ is the modified datum of $(S,\wh{j},\chi,\mu,(G',\xi,z),j)$, then $(S,\wh{\theta}\circ\wh{j},
\theta(\chi^\new),\mu^\new,(G',\xi,z),\theta'\circ j)$ equals to the modified datum of
$(S,\wh{\theta}\circ\wh{j},
\theta(\chi),\mu,(G',\xi,z),\theta'\circ j)$. To prove the
proposition, it remains to show that
$$\pi_{(S_j,\mu_j)}\circ\theta'\simeq\pi_{(S_{\theta'\circ j},\mu_{\theta'\circ j})}.$$
First we have
$$\pi_{(S_j,\mu_j)}\circ\theta'\simeq\pi_{(\theta'(S_j),\mu_j\circ\theta')}.$$
As a character of $\theta'(S_j)(F)$,
$$\mu_j\circ\theta'=(\mu^\new\circ j^{-1}\circ\theta')\cdot(\epsilon_{f,\ram,S_j}\circ\theta')
\cdot(\epsilon^\ram_{S_j}\circ\theta').$$ On the other hand, we have
$$\mu_{\theta'\circ j}=(\mu^\new\circ
j^{-1}\circ\theta')\cdot\epsilon_{f,\ram,\theta'(S_j)}\cdot
\epsilon^\ram_{\theta'(S_j)}.$$ According to the definition of
$\epsilon_{f,\ram}$ and $\epsilon^\ram$, and the correspondence
$R(S,G')\leftrightarrow R(\theta'(S),G')$ established before, it is
straightforward to check that
$$\epsilon_{f,\ram,S_j}\circ\theta'=\epsilon_{f,\ram,\theta'(S_j)}\quad
\textrm{and}\quad
\epsilon^\ram_{S_j}\circ\theta'=\epsilon^\ram_{\theta'(S_j)},$$ which
completes the proof.

\end{proof}

\subsection{Contragredient regular supercuspidal $L$-packets}\label{subsec.
contragredient}\label{subsec. contragredient L-packets}

For a general $L$-parameter $\varphi$ for $G$, it is conjectured by
Adams and Vogan \cite{av16} on the level of packets, and by Prasad
\cite{pra} and Kaletha \cite{ka13} on the level of representations,
that the contragredient of the $L$-packet $\Pi_\varphi$ itself
should be an $L$-packet and there should exist an explicit relation
between the refined $L$-parameters of $\pi$ and $\pi^\vee$ for
$\pi\in\Pi_\varphi$. Kaletha \cite[\S5]{ka13} showed that this conjecture
holds when $\varphi$ is {\em tame regular semisimple elliptic} or
{\em epipelagic}. In this subsection, we give a proof of this
conjecture for regular supercuspidal parameters, following the
arguments of \cite[\S5]{ka13} closely. From now on, we fix a regular
supercuspidal $L$-parameter $\varphi$ for $G$.

\begin{defn}\label{defn. contragredient}
For each rigid inner twist $(G',\xi,z)$ of $G$, the {\em
contragredient $L$-packet} $\Pi^\vee_\varphi(G')$ is defined to be
$$\Pi^\vee_\varphi(G')=\left\{\pi_j^\vee|
\pi_j\in\Pi_\varphi(G')\right\},$$ and the {\em compound
contragredient $L$-packet} $\Pi_\varphi^\vee$ is defined to be the disjoint union
$$\Pi_\varphi^\vee=\bigsqcup\Pi_\varphi^\vee(G')$$ where $(G',\xi,z)$ runs
over the set of isomorphism classes of rigid inner twists of $G$.
\end{defn}

We fix a $\Gamma$-invariant splitting
$(\wh{T},\wh{B},\{X_{\wh{\alpha}}\})$ for $\wh{G}$. The {\em
Chevalley involution $\wh{C}$} of $\wh{G}$ is uniquely determined by
the following conditions:
\begin{itemize}
\item $\wh{C}(\wh{T})=\wh{T}$ and $\wh{C}|_{\wh{T}}=-1$ where -1
denotes the inverse map,
\item $\wh{C}(\wh{B})=\wh{B}^\op$ where $\wh{B}^\op$ is the opposite
Borel of $\wh{B}$,
\item $C(X_{\wh{\alpha}})=X_{-\wh{\alpha}}$ for $\wh{\alpha}\in
R(\wh{T},\wh{G})$.
\end{itemize}
Note that $\wh{C}$ commutes with the action of $\Gamma$ on $\wh{G}$.
Thus we can extend $\wh{C}$ to an $L$-automorphism
${^LC}=\wh{C}\times\id_{W_F}$ of ${^LG}$.

Let $(S,\wh{j},\chi,\mu)$ be a regular supercuspidal $L$-packet
datum corresponding to $\varphi$. We assume that
$\wh{j}(\wh{S})=\wh{T}$. Note that ${^LC}\circ\varphi$ is also regular
supercuspidal. For the $\chi$-data $\chi=(\chi_\alpha)$ we denote by
$\chi^{-1}$ the $\chi$-data $(\chi^{-1}_\alpha)$.

\begin{lem}\label{lem. contragredient L-packet datum}
The 4-tuple $(S,\wh{j},\chi^{-1},\mu^{-1})$ is a regular
supercuspidal $L$-packet datum and corresponds to
${^LC\circ\varphi}$.
\end{lem}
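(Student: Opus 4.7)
The plan is to verify two things: first, that $(S,\wh{j},\chi^{-1},\mu^{-1})$ actually satisfies the axioms of Definition~4.3 for a regular supercuspidal $L$-packet datum; second, that under the correspondence of Proposition~4.2 it maps to $^LC\circ\varphi$. The first step is mostly formal. The torus $S$, the embedding $\wh{j}$ and the admissible-embedding class it determines are unchanged, so $S$ remains a tame elliptic maximal torus of $G$. The twisted Levi subgroup $G^0$ attached to a character $\mu$ depends only on the subsystems $R_r=\{\alpha:\mu(\N_{E/F}(\alpha^\vee(E_r^\times)))=1\}$, and these are obviously invariant under $\mu\mapsto\mu^{-1}$; likewise, tame extra regularity and ellipticity of $(S,\mu)$ are phrased in terms of stabilizers of $\mu|_{S(F)_0}$, which coincide with those of $\mu^{-1}|_{S(F)_0}$. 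Finally, the minimally ramified condition on $\chi$-data is defined purely in terms of the ramification behaviour of each $\chi_\alpha$, which is the same for $\chi_\alpha^{-1}$, and the $\Omega(S,G^0)(F)$-invariance condition is preserved since $G^0$ is unchanged and $\gamma\mapsto(\chi^{-1})_{\gamma\cdot\alpha}=\chi_{\gamma\cdot\alpha}^{-1}=\chi_\alpha^{-1}$.

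For the second step, I would combine two standard ingredients. The LLC for tori is equivariant for Chevalley involutions: the inversion map $\wh{c}_S:\wh{S}\to\wh{S}$, $s\mapsto s^{-1}$, extended trivially on $W_F$ to an $L$-automorphism of $^LS$, satisfies $\varphi_{S,\mu^{-1}}=\wh{c}_S\circ\varphi_{S,\mu}$. Next, I would establish the key identity
\[
^LC\circ{}^Lj_\chi \;=\; {}^Lj_{\chi^{-1}}\circ\wh{c}_S
\]
as $L$-embeddings $^LS\to{}^LG$, at least up to $\wh{G}$-conjugacy. Granting this, one gets
\[
^LC\circ\varphi \;=\; {}^LC\circ{}^Lj_\chi\circ\varphi_{S,\mu}
\;=\;{}^Lj_{\chi^{-1}}\circ\wh{c}_S\circ\varphi_{S,\mu}
\;=\;{}^Lj_{\chi^{-1}}\circ\varphi_{S,\mu^{-1}},
\]
which, together with Proposition~4.2, is exactly the statement that $(S,\wh{j},\chi^{-1},\mu^{-1})$ corresponds to $^LC\circ\varphi$. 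In particular, the regularity of $^LC\circ\varphi$ as a supercuspidal parameter then follows for free from Proposition~4.2 together with the first step.

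The main obstacle is the displayed identity of $L$-embeddings. To prove it, I would unwind the Langlands--Shelstad formula for $^Lj_\chi$, which writes its value on $\wh{j}(s)\rtimes\sigma$ in the form $\wh{j}(s)\cdot r_{p,\chi}(\sigma)\cdot n(\omega_S(\sigma))\rtimes\sigma$, where $r_{p,\chi}(\sigma)\in\wh{T}$ is a 1-cochain built explicitly from the $\chi$-data and $n(\omega_S(\sigma))$ is a section of $N(\wh{T},\wh{G})\to\Omega(\wh{T},\wh{G})$ that does not depend on $\chi$. Because $\wh{C}$ acts by inversion on $\wh{T}$ and sends the pinning root vectors $X_{\wh\alpha}$ to $X_{-\wh\alpha}$, it converts the Langlands--Shelstad cocycle $r_{p,\chi}$ into a cocycle for the data $\chi^{-1}$, while $\wh{C}(n(\omega_S(\sigma)))$ differs from $n(\omega_S(\sigma))$ by an element of $\wh{T}$ which is again absorbed into the cocycle discrepancy between $\chi$ and $\chi^{-1}$. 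This is precisely the type of cocycle computation carried out by Kaletha in \cite{ka13} in the tame regular semisimple elliptic and epipelagic cases; the argument there adapts verbatim to regular supercuspidal parameters because only the $\wh{T}$-part of the embedding enters, and the conclusion is the desired equality up to a coboundary, hence up to $\wh{G}$-conjugacy. I would therefore invoke or mildly adapt \cite[\S\ldots]{ka13} for this step rather than redo the computation in full.
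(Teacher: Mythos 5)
Your proposal is correct and follows essentially the same route as the paper: verify the datum axioms directly (ellipticity, the twisted Levi, extra regularity, and the minimal ramification condition are all manifestly preserved under $\mu\mapsto\mu^{-1}$ and $\chi\mapsto\chi^{-1}$), then reduce the correspondence to the identity ${}^LC\circ{}^Lj_\chi={}^Lj_{\chi^{-1}}\circ(-1)$ of $L$-embeddings together with $\varphi_{S,\mu^{-1}}=(-1)\circ\varphi_{S,\mu}$. The paper simply cites \cite[Lemma 4.1]{ka13} for that commutative square, which is exactly the cocycle computation you sketch (and it is already stated there in the needed generality, so no adaptation or passage to $\wh{G}$-conjugacy is required).
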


\begin{proof}
It is straightforward to check that $(S,\wh{j},\chi^{-1},\mu^{-1})$
is a regular supercuspidal $L$-packet datum. On the other hand, by
\cite[Lemma 4.1]{ka13} the following diagram is commutative:
$$\begin{tikzcd}
{^LS} \arrow[rr, "-1"] \arrow[dd, "{^Lj_\chi}"'] &  & {^LS} \arrow[dd, "{^Lj_{\chi^{-1}}}"] \\
&  &                  \\
{^LG} \arrow[rr, "{^LC}"]            &  & {^LG}           
\end{tikzcd} $$
Therefore, since $\varphi={^Lj_\chi}\circ\varphi_{S,\mu}$, we have
$$\begin{aligned}{^LC\circ\varphi}&={^Lj_{\chi^{-1}}}\circ(-1)\circ\varphi_{S,\mu}\\
&={^Lj_{\chi^{-1}}}\circ\varphi_{S,\mu^{-1}},
\end{aligned}$$
where $\varphi_{S,\mu^{-1}}:W_F\ra{^LS}$ is the $L$-parameter
attached to the character $\mu^{-1}$ of $S(F)$. This implies that
$(S,\wh{j},\chi^{-1},\mu^{-1})$ corresponds to ${^LC}\circ\varphi$.
\end{proof}

\begin{prop}\label{prop. contragredient L-packet}
We have $\Pi_\varphi^\vee=\Pi_{{^LC\circ\varphi}}$.
\end{prop}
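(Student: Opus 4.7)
The plan is to parallel the proof of Proposition \ref{prop. twisted L-packets}. Lemma \ref{lem. contragredient L-packet datum} already furnishes the needed correspondence at the level of $L$-packet data; I would promote it to the level of regular supercuspidal data by sending
$$(S,\wh{j},\chi,\mu,(G',\xi,z),j)\longmapsto (S,\wh{j},\chi^{-1},\mu^{-1},(G',\xi,z),j).$$
The admissibility of $j$ is unchanged, $\chi\mapsto\chi^{-1}$ is obviously an involution preserving the minimally-ramified condition, and the normalization step hidden in Remark \ref{rem. caution of the defn of repns in packets} (replacing $\mu$ by $\mu^\new$) is likewise compatible with inversion since it is built by multiplying $\mu$ by the quadratic characters $\epsilon_{f,r}$ and $\epsilon^r$. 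This gives a bijection between the isomorphism classes of regular supercuspidal data mapping to $(S,\wh{j},\chi,\mu)$ and those mapping to $(S,\wh{j},\chi^{-1},\mu^{-1})$, restricting to a bijection over each fixed rigid inner twist $(G',\xi,z)$. Hence $\Pi_\varphi^\vee(G')=\Pi_{{^LC\circ\varphi}}(G')$ will follow once we identify the representations attached to matched data.

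Concretely, I need to show that $\pi_{(S_j,\mu_j)}^\vee\simeq\pi_{(S_j,(\mu^{-1})_j)}$. The first step is to apply the standard identity $\pi_{(S,\lambda)}^\vee\simeq\pi_{(S,\lambda^{-1})}$ for regular supercuspidal representations (already invoked implicitly in the proof of Corollary \ref{cor. cor of main theorem}), giving $\pi_{(S_j,\mu_j)}^\vee\simeq\pi_{(S_j,\mu_j^{-1})}$. It then suffices to compare
$$\mu_j^{-1}=(\mu^{-1}\circ j^{-1})\cdot\epsilon_{f,r,S_j}^{-1}\cdot(\epsilon^r_{S_j})^{-1},\qquad (\mu^{-1})_j=(\mu^{-1}\circ j^{-1})\cdot\epsilon_{f,r,S_j}\cdot\epsilon^r_{S_j},$$
and since $\epsilon_{f,r,S_j}$ and $\epsilon^r_{S_j}$ are quadratic characters they coincide with their inverses, so the two expressions agree as characters of $S_j(F)$.

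The main obstacle is the auxiliary identity $\pi_{(S,\lambda)}^\vee\simeq\pi_{(S,\lambda^{-1})}$ for regular supercuspidal representations of arbitrary depth. In the depth-zero case it follows from the fact that the Deligne--Lusztig representation $\pm R_{\mathsf{S^u},\bar\lambda}$ has contragredient $\pm R_{\mathsf{S^u},\bar\lambda^{-1}}$, together with the behaviour of compact induction under taking contragredients of admissible representations of totally disconnected groups. For general depth one proceeds through a Howe factorization $(\vec{G},\pi_{(S,\lambda_\circ)},\vec{\phi})$: each quasicharacter $\phi_i$ has contragredient $\phi_i^{-1}$, so the contragredient of $\pi(\Psi)$ is the tame supercuspidal representation attached to $(\vec{G},\pi_{(S,\lambda_\circ^{-1})},\vec{\phi}^{-1})$, which is a Howe factorization of $(S,\lambda^{-1})$. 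Combining the two paragraphs with Lemma \ref{lem. contragredient L-packet datum} completes the proof and also yields the inclusion $\Pi_\varphi^{\vee}\subset\Pi_{{^LC\circ\varphi}}$, which is in fact an equality by symmetry since ${^LC}$ is an involution.
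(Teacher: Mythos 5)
Your proposal follows the paper's own proof essentially step for step: you use Lemma \ref{lem. contragredient L-packet datum} to match $L$-packet data, pass to regular supercuspidal data via $(S,\wh{j},\chi,\mu,(G',\xi,z),j)\mapsto(S,\wh{j},\chi^{-1},\mu^{-1},(G',\xi,z),j)$, and conclude with $\pi_{(S_j,\mu_j)}^\vee\simeq\pi_{(S_j,\mu_j^{-1})}$ and the observation $(\mu_j)^{-1}=(\mu^{-1})_j$ because $\epsilon_{f,r,S_j}$ and $\epsilon^r_{S_j}$ are quadratic. The only difference is that you spell out the auxiliary identity $\pi_{(S,\lambda)}^\vee\simeq\pi_{(S,\lambda^{-1})}$ via Howe factorizations, which the paper asserts without comment.
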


\begin{proof}
Let $(S,\wh{j},\chi,\mu,(G',\xi,z),j)$ be a regular supercuspidal
datum of $\varphi$.  As before, we remark that if $(S,\wh{j},\chi^\new,\mu^\new,(G',\xi,z),j)$ is the modified datum of
$(S,\wh{j},\chi,\mu,(G',\xi,z),j)$, then $(S,\wh{j},(\chi^\new)^{-1},(\mu^\new)^{-1},(G',\xi,z),j)$ equals to the modified datum of
$(S,\wh{j},\chi^{-1},\mu^{-1},(G',\xi,z),j)$.  According to \cite[\S3]{hm08}, we have
$$\pi_{(S_j,\mu_j)}^\vee\simeq\pi_{(S_j,(\mu_j)^{-1})}.$$ Recall that $\mu_j=(\mu^\new\circ j^{-1})\cdot
\epsilon_{f,\ram}\cdot\epsilon^\ram$, and both
$\epsilon_{f,\ram}$ and $\epsilon^\ram$ are quadratic characters
of $S_j(F)$. Hence we have
$$(\mu_j)^{-1}=\left((\mu^\new)^{-1}\circ j^{-1}\right)\cdot
\epsilon_{f,\ram}\cdot\epsilon^\ram=(\mu^{-1})_j.$$ Therefore,
the representation $\pi_{(S_j,(\mu^{-1})_j)}$, which is attached to
the regular supercuspidal datum
$(S,\wh{j},\chi^{-1},\mu^{-1},(G',\xi,z),j)$ for
${^LC\circ\varphi}$, is isomorphic to $\pi_{(S_j,\mu_j)}^\vee$.
\end{proof}

\subsection{Consequences}\label{subsec. consequences}
Let $\theta$ be an involution of $G$. Let $\varphi$ be a regular
depth-zero or an epipelagic supercuspidal $L$-parameter for $G$,
which is in particular a regular supercuspidal parameter. Regular
depth-zero supercuspidal $L$-parameters were first introduced in
\cite[page 825]{dr09}, which are called tame regular semisimple
elliptic $L$-parameters therein. Epipelagic supercuspidal
$L$-parameters were first considered in \cite[\S7]{ry14} and
then discussed in \cite[\S5.1]{kal15}. Recall the fact that
regular depth-zero supercuspidal $L$-parameters
correspond to regular depth-zero supercuspidal representations, and
epipelagic supercuspidal $L$-parameters correspond to epipelagic
supercuspidal representations. The following corollary is a direct
consequence of Corollaries \ref{cor. cor of main theorem} and
\ref{cor. epipelagic}, Propositions \ref{prop. twisted L-packets}
and \ref{prop. contragredient L-packet}.

\begin{cor}\label{cor. consequence}
Let $(G',H',\theta')$ be a rigid inner twist of $(G,H,\theta)$, $\varphi$
a regular depth-zero or an epipelagic supercuspidal $L$-parameter, and
$\pi\in\Pi_\varphi(G')$. Suppose that $\pi$ is $H'$-distinguished.
Then the $L$-parameters $^L\theta\circ\varphi$ and $^LC\circ\varphi$ are
$\wh{G}$-conjugate, and thus
$\Pi_{^L\theta\circ\varphi}=\Pi_{^LC\circ\varphi}$.

\end{cor}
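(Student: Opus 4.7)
The plan is to assemble the four results that immediately precede the corollary into a short chain. First, since $\pi \in \Pi_\varphi(G')$ and $\varphi$ is either a regular depth-zero or an epipelagic supercuspidal $L$-parameter, the representation $\pi$ is itself regular depth-zero (respectively epipelagic) supercuspidal. Apply Corollary \ref{cor. cor of main theorem} in the depth-zero case, or Corollary \ref{cor. epipelagic} in the epipelagic case, to the triple $(G',H',\theta')$ in place of $(G,H,\theta)$; these statements are valid since $G'$ is again a tamely ramified connected reductive $F$-group and $\theta'$ is an $F$-involution with $H' = (G')^{\theta'}$ by Lemma \ref{lem. involution on inner forms}. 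The $H'$-distinction hypothesis then yields the key symmetry
$$\pi^\vee \simeq \pi \circ \theta'.$$

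Next, I reinterpret both sides of this isomorphism as members of appropriate $L$-packets. Since $\pi \in \Pi_\varphi(G')$, the contragredient $\pi^\vee$ belongs to $\Pi_\varphi^\vee(G')$, and by Proposition \ref{prop. contragredient L-packet} the compound packet $\Pi_\varphi^\vee$ equals $\Pi_{{}^LC\circ\varphi}$, so $\pi^\vee \in \Pi_{{}^LC\circ\varphi}(G')$. Simultaneously, $\pi\circ\theta' \in \Pi_\varphi^\theta(G')$, and by Proposition \ref{prop. twisted L-packets} this set coincides with $\Pi_{{}^L\theta\circ\varphi}(G')$. Thus the isomorphism above produces a common representation in the $L$-packets $\Pi_{{}^LC\circ\varphi}(G')$ and $\Pi_{{}^L\theta\circ\varphi}(G')$.

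Finally, I conclude by invoking injectivity of the assignment from $\wh G$-conjugacy classes of regular supercuspidal $L$-parameters to their compound $L$-packets, which is a consequence of Kaletha's parametrization encoded in Proposition \ref{prop. correspondence between parameters and data} together with Lemma \ref{lem. parameterize regular repn}: distinct $\wh G$-conjugacy classes of regular supercuspidal parameters yield disjoint compound packets. Since the two regular supercuspidal parameters ${}^LC\circ\varphi$ and ${}^L\theta\circ\varphi$ have overlapping packets, they must be $\wh G$-conjugate, and consequently $\Pi_{{}^L\theta\circ\varphi} = \Pi_{{}^LC\circ\varphi}$.

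The argument is essentially a packaging step, so there is no genuine obstacle; the only mild subtlety is checking that the distinction corollaries transfer cleanly to the rigid inner twist $(G',H',\theta')$ and that the normalizations used in Definitions \ref{defn. twisted L-packets} and \ref{defn. contragredient} are compatible with the intertwining $\pi^\vee \simeq \pi\circ\theta'$. Both points are handled by the constructions already in place (Lemma \ref{lem. involution on inner forms}, Remark \ref{rem. inner twists of involutions}, and the normalizations reviewed in Remark \ref{rem. caution of the defn of repns in packets}).
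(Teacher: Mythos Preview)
Your proposal is correct and follows essentially the same approach as the paper, which simply states that the corollary is a direct consequence of Corollaries \ref{cor. cor of main theorem} and \ref{cor. epipelagic} together with Propositions \ref{prop. twisted L-packets} and \ref{prop. contragredient L-packet}. You have merely made explicit the chain $\pi^\vee\simeq\pi\circ\theta'$, its reinterpretation via the two propositions, and the injectivity of Kaletha's parametrization needed to pass from overlapping packets to $\wh{G}$-conjugate parameters.
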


\s{\small Chong Zhang\\
Department of Mathematics, Nanjing University,\\
Nanjing 210093, Jiangsu, P. R. China.\\
E-mail address: \texttt{zhangchong@nju.edu.cn}}


\begin{thebibliography}{XXXX}
\addtocontents{Bibliography}

\bibitem[AV16]{av16} J. Adams and D. Vogan, {\em Contragredient representations
and characterizing the local Langlands correspondence}, Amer. J.
Math. {\bf138} (2016), no. 3, 657--682.

\bibitem[Adl98]{adl98} J. D. Adler, {\em Refined anisotropic $K$-types and
supercuspidal representations}, Pacific J. Math. {\bf185} (1998),
no. 1, 1--32.

\bibitem[DeB06]{de06} S. DeBacker, {\em Parameterizing conjugacy classes
of maximal unramified tori via Bruhat--Tits theory}, Michigan Math.
J. {\bf54} (2006), no. 1, 157--178.


\bibitem[DR09]{dr09} S. DeBacker and M. Reeder, {\em Depth-zero supercuspidal
L-packets and their stability}, Ann. of Math. (2) {\bf169} (2009),
no. 3, 795--901.

\bibitem[DR10]{dr10} S. DeBacker and M. Reeder,
{\em On some generic very cuspidal representations}, Compos. Math.
{\bf146} (2010), no. 4, 1029--1055.

\bibitem[DS18]{ds} S. DeBacker and L. Spice,
{\em Stability of character sums for positive-depth, supercuspidal
representations}, J. Reine Angew. Math. {\bf742} (2018), 47--78.

\bibitem[Gla18]{gla} I. Glazer, {\em Representations of reductive groups distinguished by symmetric
subgroups}, Math. Z. {\bf289} (2018), no. 1--2, 471--489.

\bibitem[Hak13]{hak13} J. Hakim, {\em Tame supercuspidal
representations of $\GL_n$ distinguished by orthogonal involutions},
Represent. Theory {\bf17} (2013), 120--175.

\bibitem[Hak18]{haka} J. Hakim, {\em Constructing tame supercuspidal
representations}, Represent. Theory {\bf22} (2018), 45--86.

\bibitem[Hak]{hakb} J. Hakim, {\em Distinguished cuspidal representations over
$p$-adic and finite fields}, preprint, arXiv: 1703.08861v1.

\bibitem[HL12]{hj12} J. Hakim and J. Lansky, {\em Distinguished tame supercuspidal
representations and odd orthogonal periods},  Represent. Theory
{\bf16} (2012), 276--316.

\bibitem[HM02a]{hm02a}  J. Hakim and F. Murnaghan, {\em Tame supercuspidal
representations of $\GL(n)$ distinguished by a unitary group},
Compos. Math. {\bf133} (2002), no. 2, 199--244.

\bibitem[HM02b]{hm02b}  J. Hakim and F. Murnaghan, {\em Two types of distinguished
supercuspidal representations}, Int. Math. Res. Not. IMRN 2002, no.
35, 1857--1889.

\bibitem[HM08]{hm08}  J. Hakim and F. Murnaghan, {\em Distinguished tame
supercuspidal representations}, Int. Math. Res. Pap. IMRP 2008, no.
2, Art. ID rpn005, 166 pp.

\bibitem[How77]{how77} R. Howe, {Tamely ramified supercuspidal representations of
$\GL_n$}, Pacific J. Math. {\bf73} (1977), no. 2, 437--460.

\bibitem[Kal11]{kal11} T. Kaletha, {\em Endoscopic character identities for depth-zero supercuspidal L-packets},
Duke Math. J. {\bf158} (2011), no. 2, 161--224.

\bibitem[Kal13]{ka13} T. Kaletha, {\em Genericity and contragredience
in the local Langlands correspondence}, Algebra Number Theory {\bf7}
(2013), no. 10, 2447--2474.

\bibitem[Kal14]{ka14} T. Kaletha, {\em Supercuspidal $L$-packets via
isocrystals}, Amer. J. Math. {\bf136} (2014), no. 1, 203--239.

\bibitem[Kal15]{kal15} T. Kaletha,
{\em Epipelagic $L$-packets and rectifying characters}, Invent.
Math. {\bf202} (2015), no. 1, 1--89.

\bibitem[Kal16a]{kal16a} T. Kaletha, {\em Rigid inner forms of real and p-adic groups}, Ann.
of Math. (2) {\bf184} (2016), no. 2, 559--632.

\bibitem[Kal16b]{ka16b} T. Kaletha, {\em The local Langlands conjectures
for non-quasi-split groups}, Families of Automorphic Forms and the
Trace Formula, Simons Symposia, Springer 2016, 217--257.

\bibitem[Kal19]{kal} T. Kaletha, {\em Regular supercuspidal
representations}, J. Amer. Math. Soc. {\bf32} (2019), no. 4, 1071--1170.

\bibitem[Kim07]{kim07} J.-L. Kim,
{\em Supercuspidal representations: an exhaustion theorem}, J. Amer.
Math. Soc. {\bf20} (2007), no. 2, 273--320.

\bibitem[Kot84]{kot84} R. Kottwitz,
{\em Stable trace formula: cuspidal tempered terms}, Duke Math. J.
{\bf51} (1984), no. 3, 611--650.


\bibitem[Lus90]{lus90} G. Lusztig, {\em Symmetric spaces over a finite field},
The Grothendieck Festschrift, Vol. III, 57--81, Progr. Math., 88,
Birkh$\mathrm{\ddot{a}}$user Boston, Boston, MA, 1990

\bibitem[MP94]{mp94} A. Moy and G. Prasad, {\em Unrefined minimal $K$-types for $p$-adic
groups}, Invent. Math. {\bf 116} (1994), no. 1--3, 393--408.

\bibitem[MP96]{mp96} A. Moy and G. Prasad, {\em Jacquet functors and unrefined minimal
$K$-types}, Comment. Math. Helv. {\bf71} (1996), no. 1, 98--121.

\bibitem[Mur11]{mur11} F. Murnaghan, {\em Parametrization of tame supercuspidal
representations}, On Certain $L$-functions, Clay Math. Proc., vol
13, Amer. Math. Soc., Providence, RI, 2011, 439--469.

\bibitem[Por14]{por14} R. Portilla,
{\em Finite order automorphisms and a parametrization of nilpotent
orbits in p-adic Lie algebras}, Amer. J. Math. {\bf136} (2014), no.
3, 551--598.

\bibitem[Pra]{pra} D. Prasad, {\em A `relative' local Langlands
correspondence}, preprint, arXiv:1512.04347v1.

\bibitem[Ree08]{ree08} M. Reeder,
{\em Supercuspidal L-packets of positive depth and twisted Coxeter
elements}, J. Reine Angew. Math. {\bf620} (2008), 1--33.

\bibitem[RY14]{ry14} M. Reeder and J.-K. Yu,
{\em Epipelagic representations and invariant theory}, J. Amer.
Math. Soc. {\bf27} (2014), no. 2, 437--477.

\bibitem[Ric82]{ric82} R.W. Richardson, {\em Orbits, invariants, and representations associated to involutions of reductive groups}, Invent. Math. {\bf66} (1982), no. 2, 287--312.

\bibitem[SV17]{sv} Y. Sakellaridis and A. Venkatesh,
{\em Periods and harmonic analysis on spherical varieties}, Ast\'erisque 396 (2017), viii+360 pp.

\bibitem[Yu01]{yu01} J.-K. Yu, {\em Construction of tame supercuspidal
representations}, J. Amer. Math. Soc. {\bf14} (2001), no. 3,
579--622.

\bibitem[Zha18]{zha} C. Zhang, {\em Distinction of regular depth-zero supercuspidal
L-packets}, Int. Math. Res. Not. IMRN 2018, no. 15, 4579--4601.

\bibitem[Zha]{zhab} C. Zhang, {\em Theta lifts and distinction for regular supercuspidal representations}, Math. Z., published online, DOI 10.1007/s00209-019-02391-w.


\end{thebibliography}
\end{document}